\documentclass[11pt,reqno]{article}
\usepackage{amsmath}
\usepackage{amsthm}
\usepackage{amssymb}
\usepackage{amsfonts}
\usepackage{cases}
\usepackage{graphicx}
\usepackage{xcolor}
\usepackage[margin=1in]{geometry}
\usepackage[utf8]{inputenc}
\usepackage[margin=1in]{geometry}
\usepackage{verbatim}
\usepackage{caption,subcaption}
\usepackage[colorlinks,citecolor=blue,urlcolor=blue]{hyperref}
\usepackage{bm}

\usepackage{algorithm}
\usepackage{algpseudocode}
\usepackage[normalem]{ulem}
\makeatletter
\renewcommand{\fnum@algorithm}{\fname@algorithm}
\allowdisplaybreaks

\numberwithin{equation}{section}
\newtheorem{Definition}{Definition}[section]
\newtheorem{Remark}{Remark}[section]
\newtheorem{Theorem}{Theorem}[section]
\newtheorem{Lemma}{Lemma}[section]
\newtheorem{Proposition}{Proposition}[section]

\newtheorem{Assumption}{Assumption}[section]

\newcommand{\be}{\begin{equation}}
\newcommand{\ee}{\end{equation}}
\newcommand{\bee}{\begin{equation*}}
\newcommand{\eee}{\end{equation*}}
\newcommand{\bi}{\begin{itemize}}
\newcommand{\ei}{\end{itemize}}
\DeclareMathOperator{\tr}{tr}
\DeclareMathOperator*{\argmax}{arg\,max}

\usepackage{algorithm}
\usepackage{algpseudocode}
\usepackage{soul}%delete line

\usepackage{enumitem}

\usepackage{accents}

\def \rightarrow{{\to}}
\def \E{\mathbb{E}}

\def \N{\mathbb{N}}
\def \P{\mathbb{P}}

\def \R{\mathbb{R}}

\def \T{\mathbb{T}}

\def \Cc{{\mathcal C}}

\def \Ac{{\mathcal A}}
\def \Ec{{\mathcal E}}
\def \Pc{{\mathcal P}}

\def \Hc{\mathcal{H}}
\def \Mc{{\mathcal M}}
\def \eps{\varepsilon}
\def \Leb{\operatorname{\texttt{Leb}}}
\def \cone{\operatorname{\text{cone}}}
\def \ul{\operatorname{\text{loc}}}
\def \ul{\operatorname{\textbf{ul}}}
\def \wg{\operatorname{\textbf{wg}}}

\title{Equilibrium under Time-Inconsistency: A New Existence Theory by Vanishing Entropy Regularization}

\author{ 
Zhenhua Wang \thanks{Zhongtai Securities Institute for Financial Studies, Shandong University, Jinan, Shandong, China. Email:\url{zhenhuaw@sdu.edu.cn}}
\and Xiang Yu\thanks{Department of Applied Mathematics,  The Hong Kong Polytechnic University, Kowloon, Hong Kong. Email:\url{xiang.yu@polyu.edu.hk}}
\and Jingjie Zhang\thanks{China School of Banking and Finance, University of International Business and Economics, Beijing, China. Email:\url{jingjie.zhang@uibe.edu.cn}}
	\and Zhou Zhou\thanks{School of Mathematics and Statistics, University of Sydney, Sydney, Australia. Email:\url{zhou.zhou@sydney.edu.au}} }

\begin{document}
\date{}
\maketitle	

\begin{abstract}
This paper develops a framework for establishing the existence of solutions to the equilibrium Hamilton–Jacobi–Bellman (EHJB) equation arising in time-inconsistent stochastic control problems. The time-inconsistency in our setting arises from the initial-time dependence such as the non-exponential discounting. The classical approach typically relates the existence of equilibrium to the classical solution of the EHJB, whose existence is still an open problem under general model assumptions. We resolve this challenge by building on a vanishing entropy regularization approach. Using fixed-point arguments, we first establish the existence of classical solutions to the exploratory equilibrium Hamilton–Jacobi–Bellman Equation (EEHJB) by deriving a series of delicate PDE estimates for the solution and its derivatives. Building on these estimates for the solution of the EEHJB and its derivatives, we then conduct a rigorous convergence analysis under suitable norms as the entropy regularization vanishes. Our main result shows that solutions of the EEHJB converge to a strong solution of the original EHJB, corresponding to the limit of the regularized equilibria. This convergence yields a verification argument ensuring that the limiting relaxed equilibrium indeed constitutes an equilibrium for the original time-inconsistent control problem. We thus establish the well-posedness of the EHJB and the existence of equilibria in diffusion models under time-inconsistency, without resorting to conventional stringent regularity assumptions of the EHJB.      
\ \\
\ \\
\textbf{Keywords}:
	time-inconsistent stochastic control, entropy regularization, exploratory equilibrium HJB equation, convergence analysis, generalized equilibrium HJB equation, relaxed equilibrium 

\end{abstract}

\maketitle

\section{Introduction}
Vast real-life financial and economic problems are compromised to be time-inconsistent due to the non-exponential discounting adopted by the decision maker, that is, a policy deemed optimal today may no longer remain optimal at future dates. In response to the failure of global optimality, a more suitable alternative solution, initiated by \cite{Strotz}, is to look for the subgame perfect Nash equilibrium for the intra-personal game among the decision maker's current and future selves. In the continuous-time setting, the seminal study of \cite{Bjork17} considered a perturbation of the control for an infinitesimal period of time and used the limit of time-discretization to derive the extended HJB equation as a system of nonlinear PDEs. By proving a rigorous verification theorem, the equilibrium can be characterized by the classical solution of the extended HJB equation in \cite{Bjork17}. From the approximation perspective by discrete-time intra-personal games, \cite{Yong2012} formulated a nonlinear and nonlocal PDE, called the equilibrium HJB equation (EHJB), which is mathematically equivalent to the extended HJB system in \cite{Bjork17} after some transformations. These two pioneer studies inspired a wave of interesting work on continuous-time time-inconsistent control problems, primarily focusing on the analysis of extended HJB system or EHJB in different models. As the verification theorem plays a vital role in establishing the existence and characterization of equilibrium, the conventional approach in the literature stands on the stringent regularity conditions of the classical solution to the extended HJB system or EHJB. However, analyzing the existence of classical solution to the general system of nonlinear and nonlocal PDEs remains a challenging open problem. Recently, \cite{lei_nonlocal_2023} investigated the short-time classical solution to the general EHJB under specific model assumptions and \cite{lei_nonlocal_2024} further established the existence of global solution of EHJB for time-inconsistent game problems relying on stronger conditions on model coefficients and cost functions. When these restrictive model assumptions are violated or when the existence of classical solution to extended HJB or EHJB is unknown, equilibrium in continuous-time models under time-inconsistency remains nearly unexplored.  

In the context of continuous-time reinforcement learning (RL), the entropy regularization was first introduced by \cite{wang2020reinforcement} to encourage the exploration during the learning procedure by its induced action randomization as relaxed controls. In the linear-quadratic setting, the optimal policy can be explicitly characterized as a Gaussian distribution, a special case of the Gibbs measure, which degenerates to the optimal deterministic policy in the classical formulation as the regularization tends to zero. Since then, entropy regularization has attracted an upsurge of interest in various RL problems, see \cite{wang2020continuous},
\cite{jia2022policy_gradient_continuous_time}, \cite{guo2022entropy}, \cite{jia2023q}, \cite{dai2023learning}, \cite{dong}, \cite{DFX24}, \cite{dai2024learning}, \cite{bo2025optimal}, \cite{pham25}, \cite{wei2025continuous}, \cite{HLYZ25}, among others. 

However, due to the presence of the additional entropy term in the objective function, it has been well noted that the exploratory stochastic control problem in fact deviates from the original problem without the entropy, which raises an important question in the exploratory formulation: will the exploratory formulation converge to the original one as the entropy regularization vanishes? In the time-consistent setting, this problem has been fully addressed in \cite{tang2022exploratory} by hinging on the well-developed stability theory of the viscosity solution of the exploratory HJB equation with respect to the temperature parameter. Recently, in the same realm of viscosity solution theory and stability analysis, the convergence of the system of exploratory HJB equations has also been established by \cite{HLYZ25} for optimal regime-switching problems.

Apart from the scope of RL, it has also been unfolded in some recent studies that the Gibbs measure characterization of the optimal relaxed control under entropy regularization brings mathematical simplifications in exercising fixed point arguments. By further establishing the convergence as entropy regularization vanishes, it offers a new route to confirm the existence of solution in the original problem as the limit from the regularized problems. Such an approach was initiated in \cite{bayraktar2025relaxed}, which studied time-inconsistent MDP problems under general discounting and established the existence of relaxed equilibrium in both discrete-time and continuous-time settings using the approach of vanishing entropy regularization; Subsequently, \cite{YuYuan26} investigated the time-inconsistent mean-field MDP of stopping and proved that the regularized equilibrium converges to the relaxed equilibrium; In discrete-time major-minor mean-field games of stopping, \cite{YZZZ} employed the vanishing entropy regularization approach in the major-player's problem and proved the existence of mean-field equilibrium; \cite{DZ24} studied the convergence of the exploratory extended HJB equations for continuous-time mean-variance stopping problems by assuming the existence of classical solutions, which leads to the existence of equilibrium in the original problem.          

Inspired by some observed technical convenience of entropy regularization in aforementioned studies, this paper aims to revisit the time-inconsistent control problems when the drift coefficient of state dynamics is controlled and develop a new theory for the existence of equilibrium by analyzing the convergence of PDEs under vanishing entropy regularization. On one hand, unlike the discrete-time models in \cite{bayraktar2025relaxed}, \cite{YuYuan26}, \cite{YZZZ}, we have to shift the focus from the convergence of operators to the more delicate convergence analysis of associated HJB equations arising from the continuous-time setting. Some technical estimations of the solution and its derivatives are inevitable. On the other hand, in sharp contrast to  \cite{tang2022exploratory} and \cite{HLYZ25} for classical time-consistent optimal control problems, we can no longer resort to some stability results of viscosity solutions in the literature. Indeed, the dynamic programming principle fails in our model and we are in the absence of necessary  viscosity characterization of the equilibrium value function. The rigorous convergence analysis and the verification of equilibrium in our general setting call for new technical tools and PDE analysis. Comparing with the recent work \cite{DZ24}, we do not assume any priory LQ solution structure and do not impose the restrictive assumption on the existence of classical solution to the EHJB for the proof of convergence analysis.

Our methodology and theoretical contributions are outlined as follows:

Firstly, we consider the time-inconsistent stochastic control problem under the Shannon entropy regularization and derive the exploratory equilibrium HJB equation (EEHJB) system \eqref{eq:cha.HJBentropy}--\eqref{eq:cha.HJBentropy'}. The main contribution of this step is establishing the existence of classical solution to the EEHJB system by invoking the fixed point argument on a specialized compact space with the help of the Gibbs-form policy operator in \eqref{eq:cha.HJBpientropy}. Thanks to the sublinear growth estimate of the entropy term in Lemma \ref{lm:entropy.ln}, we first derive H\"older norm estimates for the Gibbs-form operator  $\pi(x,a):= \Gamma_{\lambda}(x, D_x w(x), a)$ with an initial input $w(x)\in \Cc^{1 }_{\alpha}(\R^d)$ (see Lemma \ref{lm:pi.est}). We then show that the associated value function $V_{\lambda}^{\pi}$ satisfies the EEHJB system and enjoys certain weak type of H\"older regularity estimates (see Lemma \ref{prop:iteration.estimateentropy}). Building upon these estimates, the equilibrium essentially boils down to the fixed point of the operator $\Phi_\lambda(w):=V^{\Gamma_\lambda(D_x w(0,x))}$, for which a key step is to identify a suitable compact set. Our first main  contribution is to propose to work with $\mathcal{M}_{\lambda}$ defined by \eqref{compact-set} as a specialized compact subset of $\Cc^{1,2, \wg}_{\beta/2, \beta}(\T\times \R^d)$ together with a tailor-made weighted global H\"{o}lder norm. This construction ensures that $\Phi_\lambda$ defines a continuous self-mapping on $\mathcal{M}_{\lambda}$ (see Lemma \ref{lm:Phi.continuous}). Thanks to these preparations, Schauder fixed-point theorem can be called such that the value function under the fixed point satisfies the EEHJB system with the desired regularity (see Theorem \ref{thm:equil.existenceentropy}), which enables us to conclude the existence of regularized equilibrium by some standard verification arguments.

Secondly, we cope with the more challenging task by analyzing the convergence of EEHJB system towards the original EHJB system when the entropy parameter $\lambda\rightarrow 0$. To this end, we consider a sequence of pairs $(v^n,\pi^n)_{n\in \N}$ satisfying the nonlinear PDEs \eqref{eq:HJBn} and \eqref{eq:HJBn'} in the EEHJB system, respectively. Building on the norm estimates established in the previous section and employing a diagonal argument, we first show the existence of a subsequence $v^{n_k}$ converging locally on $D_N$ to a function  $v^\infty\in \Cc^{0,1 }_{\alpha/2, \alpha}(\T\times \R^d)\cap  W^{1,2, \ul}_{p}(\T\times \R^d)$ under the H\"{o}lder norm $\Cc^{0,1 }_{\alpha/2, \alpha}(D_N)$ for each $N\in\mathbb{N}$, whose derivatives also converge to the derivative of $v^{\infty}$ in the distribution sense on each local domain $D_N$ (see item (i) of Lemma \ref{lm:thm.C12andyoung}). Along the same subsequence, we establish the weak convergence of $\pi^{n_k}$ towards a Borel measurable $\pi^{\infty}$ with the aid of Young measure theory. As the second main contribution, we address the most intricate step to relate the limit $v^{\infty}$ to a strong solution to the EHJB equation in the original model under the limit $\pi^{\infty}$ such that $v^{\infty}=V^{\pi^{\infty}}$, see Theorem \ref{thm:equi.existence}. We achieve this core convergence result  by developing novel arguments to show that  $v^\infty$ is locally a strong solution of the EHJB and $v^\infty$ can be extended from the local limit on $D_N$ to the entire domain (see item (iii) of Lemma  \ref{lm:thm.C12andyoung}). As a consequence of this convergence result, we establish the existence of a strong solution to the nonlinear EHJB system in Theorem \ref{thm:equi.existence} for the original time-inconsistent control problem, a first kind of result in the literature. Moreover, our convergence analysis can also be interpreted as the stability result of the solution to the EEHJB system with respect to $\lambda\rightarrow 0$ in the time-inconsistent setting, which has not been explored in the literature.

Furthermore, our final goal is to show that the limit $\pi^\infty$ constitutes a relaxed equilibrium in the original problem, for which we highlight a new technical contribution: verification arguments based on the previous convergence analysis.  In fact, the limit $v^\infty\in \Cc^{0,1 }_{\alpha/2, \alpha}(\T\times \R^d)\cap  W^{1,2, \ul}_{p}(\T\times \R^d)$ is identified as a strong, rather than classical, solution to the original EHJB system. However,  thanks to the established convergence of $v^{n_k}$ to $v^\infty$ and the fact that each $v^{n_k}$ is a classical solution of the EEHJB, we take full advantage of the convergence and the It\^{o}-Krylov formula, together with localization arguments and estimates around $t=0$, to conclude that $\pi^{\infty}$ indeed satisfies the definition of relaxed equilibrium in \eqref{eq:def.equi} (see Theorem \ref{thm:equi.existence1}). Unlike most existing studies, our verification proof does not lean on the classical solution. Instead, we establish a new and weaker sufficient condition in Theorem \ref{thm:equi.existence1} for the existence
and characterization of equilibrium, which only requires a strong solution in $\Cc^{0,1 }_{\alpha/2, \alpha}(\T\times \R^d)\cap  W^{1,2, \ul}_{p}(\T\times \R^d)$ to the original EHJB. Notably,  Theorem \ref{thm:equi.existence} guarantees via a convergence argument that such a strong solution to the EHJB system always exists, thereby establishing the general existence of equilibrium.  Therefore, our new existence theory (Theorem \ref{thm:equi.existence1} and Theorem \ref{thm:equi.existence} ) opens a new avenue to study time-inconsistent control problems when the strong regularity conditions (such as classical solution) of the EHJB are difficult to prove or simply unavailable.    

Meanwhile, it is worth mentioning a recent study \cite{HYZ26} that established the policy iteration convergence for time-inconsistent control problems under entropy regularization, which partially laid the theoretical foundation for the design of some policy-iteration-based RL algorithms in time-inconsistent settings. Similar to the time-consistent counterpart, one natural question arises: although for a fixed entropy regularization, the greedy iteration of the Gibbs-form policies can lead to the regularized equilibrium, can the learned solution in the exploratory formulation approximate the true equilibrium when the temperature parameter is set to be small? As a byproduct of our main results, we provide an affirmative answer to this question by showing that the regularized equilibrium converges to a relaxed equilibrium of the original problem. This finding offers theoretical justification for the use of small temperature parameters in reinforcement learning algorithms, even in time-inconsistent settings. Moreover, in contrast to \cite{HYZ26}, where the existence of classical solutions to the EEHJB equation is established via a constructive policy iteration argument restricted to the finite-horizon setting, we develop a fundamentally different approach based on direct fixed-point arguments. 
%Our method not only yields the existence of classical solutions to the EEHJB equation but also extends naturally to the finite-horizon case. 
In addition, the new PDE estimates developed in our analysis play a crucial role in the subsequent convergence study as the entropy regularization vanishes.

The rest of the paper is organized as follows. Section \ref{sec:model} introduces the formulation of time-inconsistent control problems, the definition of relaxed equilibrium and the associated EHJB system. Section \ref{sec:entropy} formulates the problem under entropy regularization and derives the induced EEHJB system. The existence of a classical solution to the EEHJB system is established therein, together with carefully tailored norm estimates that play a crucial role in the subsequent convergence analysis. Section \ref{sec:convergence} carries out the convergence analysis of the solutions to the EEHJB towards a strong solution of the EHJB,  and developed new verification arguments to confirm that the limit of regularized equilibrium is the equilibrium in the original time-inconsistent control problem as entropy regularization vanishes.

\subsection{Notations} Let $\N$ be the set of all positive integers and denote the infinite time horizon as $\T:=[0,\infty)$. In the $m$-dimensional Euclidean space $\R^m$, we denote by $| \cdot|$ the Euclidean norm, by $\Leb(\cdot)$ the Lebesgue measure, and by $B_r(x)$ the ball centered at $x$ with radius $r$.

Given $w(t,x):\T\times{\R^d}\to {\R}$, let $\partial_tf$ denote the right derivative on the time variable $t$ and $D_xw$ (resp. $D^2_xw$) denote the first (resp. second) partial derivative vector (Hessian) on the space variable $x$. We first introduce the notation for H\"older spaces. Given a simple connected domain $D\subset \T\times {\R^d}$, define
$$
\begin{aligned}
	&\|w\|_{\Cc^0(D)}:=\sup_{(t,x)\in D} |w(t,x)|= \|w\|_{L^\infty(D)},\quad 
	[w]_{\Cc_{\alpha/2, \alpha}(D)}:=\sup_{\substack{(t,x)\neq (s,y)\in D \\ (|t-s|+|x-y|^2)\leq 1}}\frac{|w(t,x)-w(s,y)|}{(|t-s|+|x-y|^2)^{\alpha/2}}, \\
	& \|w\|_{\Cc_{\alpha/2, \alpha}(D)}:= [w]_{\Cc_{\alpha/2, \alpha}(D)}+\|w\|_{\Cc^0(D)}.
\end{aligned}
$$
Given a multi-index $a=(a_1,\cdots, a_d)$ with $|a|_{l_1}:= \sum_{i=1}^d a_i$, define $D_x^a w:= \frac{\partial^{|a|_{l_1}} w }{\partial^{a_1}_{x_1}\cdots \partial^{a_d}_{x_d}}$
and we say $w\in \Cc^{l, k}_{\alpha/2,\alpha}(D)$ 
%(resp. $w\in \Cc^{1, 2}(D)$) 
if 
$$
\|w\|_{\Cc^{l, k}_{\alpha/2,\alpha}(D)}:= \sum_{0\leq j\leq l, 0\leq |a|_{l_1}\leq k} \|\partial^j_t D^a_x w \|_{\Cc_{\alpha/2, \alpha}(D)}<\infty. %\quad \left(\text{resp. $\|w\|_{\Cc^{1, 2}(D)}:= \sum_{2l+k\leq 2} \|\partial^l_t D^k_xw \|_{\Cc^0(D)}<\infty$} \right).
$$
The H\"older seminorms are also applied when restricted to either the spatial or time variable alone, e.g., $[f(\cdot)]_{\alpha(O)}= \sup_{\substack{x\neq y\in O \\ |x-y|\leq 1}}\frac{|f(x)-f(y)|}{|x-y|^{\alpha}}$ for $f$ defined on $O\subset \R^d$, and $[g(\cdot)]_{\alpha/2(O)}= \sup_{\substack{t\neq s\in O \\ |t-s|\leq 1}}\frac{|g(t)-g(s)|}{|t-s|^{\alpha/2}}$ for $g$ defined on $O\subset \T$.

Define $D_N(t_0, x_0):= (t_0, t_0+N)\times B_N(x_0)$  and write $D_N:= D_N(0,0)$, for $N>0$. 
We further define the following weighted global H\"older norm on the whole space $\T\times \R^d$
$$
\|w\|_{\Cc^{1, k, \wg}_{\alpha/2,\alpha}(\T\times \R^d)}:=\sum_{N\in \N} \frac{1}{2^N} \|w\|_{\Cc^{1, k}_{\alpha/2,\alpha}(D_N)},
$$
%uniformly local H\"older norm and %\|w\|_{\Cc^{1, k }_{\alpha/2,\alpha}(\T\times \R^d)}:=\sup_{(t,x)\in \T\times \R^d}  \|w\|_{\Cc^{1, k}_{\alpha/2,\alpha}([t,t+1]\times B_1(x))},\quad   
and we denote by $\Cc^{1, k}_{\alpha/2,\alpha}(D)$ (resp. $\Cc^{1, k, \wg}_{\alpha/2,\alpha}(\T\times \R^d)$) the normed space of all functions with finite norm $\| \cdot \|_{\Cc^{1, k}_{\alpha/2,\alpha}(D)}$ (resp. finite norm $\|\cdot\|_{\Cc^{1, k, \wg}_{\alpha/2,\alpha}(\T\times \R^d)}$). %Notice that $\Cc^{1, k}_{\alpha/2,\alpha}(D)$ is a Banach space for any suitable domain, and $\Cc^{1, k, \wg}_{\alpha/2,\alpha}(\T\times \R^d)$ is not due to the fact that $\Cc^{1, k, \wg}_{\alpha/2,\alpha}(\T\times \R^d)$ is not completed.

%Given a simple connected bounded domain $D\subset \T\times {\R^d}$, 
We also use the Sobolev norm $\|w\|_{W^{1,k}_p(D)}:=  \sum_{0\leq l\leq 1, 0\leq |a|_{l_1}\leq k} \|\partial^l_t D^a_x w \|_{L^p(D)}$, and we further define the uniformly local Sobolev norm on the whole space $\T\times \R^d$
% and weighted global Sobolev norm on the whole space
$$
\begin{aligned}
	\|w\|_{ W^{1,2, \ul}_{p} (\T\times \R^d)}:=& \sum_{0\leq l\leq 1, 0\leq |a|_{l_1}\leq k} \left(\sup_{(t,x)\in \T\times \R^d}   \|\partial^l_t D^a_x w \|_{L^p(D_1(t,x))} \right)
	%	\|w\|_{W^{1,2, \wg}_{p} (\T\times \R^d)}:=&    \sum_{N=1}^\infty \frac{1}{2^N}  \|w\|_{W^{1, 2}_p(D_N)}.
\end{aligned}
%\left(\sup_{(t,x)\in \T\times \R^d}   \|\partial^l_t D^a_x w \|_{L^p([t,t+1]\times B_1(x))} \right)
$$

Throughout the paper, let $0<\alpha<1$ be an arbitrarily fixed H\"older constant, and we shall let the power $p$ in any Sobolev norm equal to $\frac{d+2}{1-\alpha}$. However, the whole arguments are still valid for other H\"older constants and other possible powers.

\section{Model Setup}\label{sec:model}
We consider a time-inconsistent stochastic control problem. Let $U$ denote the action space, which is a compact subset of $\R^\ell$ with $\Leb(U)>0$. We also denote by $\Pc(U)$ as the set of all probability measures on $U$ equipped with the weak topology, and $\Pc_c(U)$ as the subset of $\Pc(U)$ which contains all probability measures absolutely continuous with respect to the Lebesgue measure. For technical convenience, we focus on the model over an infinite horizon where only the drift coefficient of the state process is controlled. The general case when the diffusion coefficient is also controlled will be left for future study. 

To ensure the existence of solution to the general time-inconsistent control problem, we shall consider the relaxed equilibrium (or mixed-strategy equilibrium). To this end, let us consider an adapted relaxed control $\pi=(\pi_t)_{t\in \T}$, taking values in $\Pc(U)$, which is said to be admissible if the controlled SDE 
$$
dX^\pi_t=\int_U b(X^\pi_t, a)\pi_t(da) dt +\sigma(X^\pi_t) dW_t, X_0=x.
$$
admits a unique strong solution, where $b:\R^d\times U\to \R^d$, and $\sigma:\R^d\to \R^{d\times m}$ and $W$ is a standard $m$-dimensional Brownian motion. 

The value function under the relaxed control $\pi$ over an infinite horizon is defined by
$$
V^{\pi}(t,x)=\E_x\left[\int_0^\infty  \int_{U} r(t+s, X^\pi_s,a) \pi_s(da) ds \right],
$$
where $r(t,x,a): \T\times \R^d\times U\to \R$ is the reward function.
Let $J^{\pi}(x):=V^{\pi}(0,x)$ be the value function at the initial time $t=0$ under the relaxed control $\pi$. 

For the rest of the paper, we use notation
$$
f^\varpi(t,x):= \int_U f(t,x,a)\varpi(da)
$$
for any generic function $f(t, x, a): \T\times {\R^d}\times U\to \R$ and any generic distribution $\varpi\in \Pc(U)$.
And we use the shorthand $f^\pi(x)=f^{\pi(x)}(t,x)$ for a feedback control $\pi: \R^d\to \Pc(U)$.

\begin{Assumption}\label{assume.r}
	We assume that
	\be\label{eq:assume.integralr} 
	\begin{aligned}
		K_0:=&\sup_{a\in U}\|r(\cdot, a)\|_{\Cc^{1,0}_{\alpha/2, \alpha}(\T\times \R^d)}<\infty,\\
		K_1:=&\int_0^\infty  \sup_{x\in \R^d,a\in U}|r(t,x, a)| dt+ \int_0^\infty  \sup_{x\in \R^d,a\in U}|r_t(t,x, a)| dt < \infty,
	\end{aligned}
	\ee 
	and
	\be\label{eq:assume.rholder}    
	K_2:=\int_0^\infty  \sup_{x\in \R^d,a\in U, s< u\leq s+1}\frac{|\partial_t r(s,x, a)-\partial_t r(u,x, a)|}{|s-u|^{\alpha/2}} ds <\infty.
	\ee 
\end{Assumption}	

\begin{Remark}
	The boundedness condition in \eqref{eq:assume.integralr} ensures that $J^{\pi}$ is well-defined for any admissible relaxed control $\pi$, and the condition in \eqref{eq:assume.rholder} is satisfied when the second derivative of $r$ with respect to $t$ is integrable over the time horizon $\T$, uniformly in $a$ and $x$. This assumption will be used later to ensure the H\"older continuity of $\partial_t V^{\pi}$ for an admissible relaxed control $\pi$.
\end{Remark}

\begin{Assumption}\label{assume.b.sigma}
	Assume $K_3:=\sup_{a\in U}\|b(x, a)\|_{\Cc_{\alpha}(\R^d)}+\|\sigma(x)\|_{\Cc_\alpha(\R^d)}<\infty$, and there exists a constant $0<\eta<\infty$ such that
	%$\sigma$ satisfies the following uniform elliptic condition 
	\be\label{eq:asume.elliptic} 
	\eta|\xi|^2\leq \xi \sigma\sigma^T(x) \xi^T,\quad 
	%\leq \eta_1 |\xi|^2\quad  \
	\forall \xi\neq 0 \text{ and }x \in \R^d.
	\ee 
\end{Assumption}	

The goal of the agent is to maximize $J^{\pi}$ over all admissible relaxed controls. The non-exponential discounting renders the stochastic control problem time inconsistent. Since global optimality is no longer attainable, we seek a subgame perfect Nash equilibrium for the intra-personal game under which future selves have no incentive to deviate from the prescribed strategy. This game-theoretic perspective motivates the following definition.

\begin{Definition}\label{def:equi.relaxed}
	An admissible relaxed control $\pi^*$ is called an equilibrium if for any $x\in \R^d$
	\begin{equation}\label{eq:def.equi}
		\limsup_{\eps\to 0+} \frac{J^{\pi'\otimes_{\eps} \pi^*}(x)- J^{\pi^*}(x)}{\eps}\leq 0,\quad \forall \text{ admissible } \pi'.
	\end{equation}
\end{Definition}

This paper aims to study the existence of equilibrium by employing the vanishing entropy regularization approach. Such an approach is first proposed in showing the general existence of equilibrium for time-inconsistent  MDP problems in \cite{bayraktar2025relaxed}. In the existing literature, the existence of equilibrium is typically addressed by resorting to the existence of classical solution to the extended HJB or EHJB and some rigorous verification arguments, see Proposition \ref{prop:classic.HJB} below, whose proof is omitted because it is analogous to that of Theorem \ref{thm:equil.existenceentropy} Part (i).

\begin{Proposition}\label{prop:classic.HJB}
	Let Assumptions \ref{assume.r} and \ref{assume.b.sigma} hold. Suppose that there exists a classic solution $u(t,x)\in \Cc^{1,2}(\T\times \R^d)$ to the following nonlinear and nonlocal PDE system
	\begin{align}
		0=&\partial_t u(0,x)+\frac{1}{2}\tr\left(\sigma\sigma^T(x) D^2_x u(0,x)\right) 
		+\sup_{\varpi\in \Pc(U)} \left\{ \int_U \left[ b(x,a)D_x u(0,x) +r(0,x,a) \right] \varpi(da) \right\}, \label{eq:cha.HJB}\\
		0=&\partial_t u (t,x)+\frac{1}{2}\tr\left((\sigma\sigma^T)(x) D^2_x u (t,x)\right) +b^{\pi^*}(x)D_x u(t,x) +r^{\pi^*}(t,x) \label{eq:cha.HJB'},
	\end{align}
	where the relaxed control $\pi^*: \R^d\to \Pc(U)$ achieves the supremum of the equation \eqref{eq:cha.HJB}. Then we have that $\pi^*$ is an equilibrium and $V^{\pi^*}=u$. We call \eqref{eq:cha.HJB}–\eqref{eq:cha.HJB'} the equilibrium HJB (EHJB) system.
	%Conversely, given an admissible relaxed control $\pi^*: \R^d\mapsto \Pc(U)$, suppose the system \eqref{eq:cha.HJB}--\eqref{eq:cha.HJB'} admits a solution $u(t,x)$ with $u\in \Cc^{1,2}_{\alpha/2, \alpha}(\T\times \R^d)\cap W^{1,2 }_{p, \ul}(\T\times \R^d)$, then $\pi^*$ is an equilibrium and $V^{\pi^*}=u$. Moreover, $\pi^*$ satisfies \eqref{eq:cha.HJBpi} a.e. on $\R^d$.
\end{Proposition}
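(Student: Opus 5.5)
The plan is a verification argument in two stages. First we identify $u=V^{\pi^*}$ using \eqref{eq:cha.HJB'}. Then we obtain the equilibrium inequality \eqref{eq:def.equi} from the maximality in \eqref{eq:cha.HJB} through a first-order expansion in $\eps$.

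\textbf{Step 1: $u=V^{\pi^*}$.} Since $\pi^*$ is a feedback control, $X^{\pi^*}$ solves $dX_s=b^{\pi^*}(X_s)ds+\sigma(X_s)dW_s$; admissibility of $\pi^*$ gives a unique strong solution. Fix $(t,x)$ and apply It\^o's formula to $s\mapsto u(t+s,X^{\pi^*}_s)$ on $[0,T\wedge\tau_R]$, where $\tau_R$ is the exit time from $B_R(x)$. By \eqref{eq:cha.HJB'}, the drift of $u(t+s,X_s)$ equals $-r^{\pi^*}(t+s,X_s)$. Hence
\[
u(t,x)=\E_x\Big[\int_0^{T\wedge\tau_R} r^{\pi^*}(t+s,X_s)\,ds+u(t+T\wedge\tau_R,X_{T\wedge\tau_R})\Big].
\]
We let $R\to\infty$ and then $T\to\infty$. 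The integral term converges to $V^{\pi^*}(t,x)$ by dominated convergence, using $K_1<\infty$. For the terminal term we need $u$ bounded and $\sup_x|u(T,x)|\to0$ as $T\to\infty$. This is the natural growth/decay class for the EHJB, and I would impose it implicitly as part of ``classical solution''. It matches what $V^{\pi}$ satisfies, since $|V^\pi(T,x)|\le\int_T^\infty\sup|r|\,ds\to0$. I expect this to be the main delicate point: without such a boundary condition at $t=\infty$, the identification fails. I would therefore state that $u$ is understood within the class of bounded functions vanishing uniformly as $t\to\infty$, with $D_xu$ bounded locally uniformly in $t$ near $0$, as in Theorem \ref{thm:equil.existenceentropy}.

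\textbf{Step 2: equilibrium condition.} Fix an admissible $\pi'$. Write $\pi^\eps:=\pi'\otimes_\eps\pi^*$, which uses $\pi'$ on $[0,\eps)$ and $\pi^*$ afterwards. By the Markov property and Step 1, the continuation value from time $\eps$ is $u(\eps,X^{\pi'}_\eps)$. Therefore
\[
J^{\pi^\eps}(x)=\E_x\Big[\int_0^\eps r^{\pi'_s}(s,X_s)\,ds+u(\eps,X_\eps)\Big],\qquad X=X^{\pi'}.
\]
Applying It\^o's formula to $u(s,X_s)$ on $[0,\eps]$, with localization handled as before, gives
\[
J^{\pi^\eps}(x)-J^{\pi^*}(x)=\E_x\int_0^\eps\Big[\partial_tu(s,X_s)+\tfrac12\tr(\sigma\sigma^T D^2_xu)(s,X_s)+\int_U\big(b(X_s,a)D_xu(s,X_s)+r(s,X_s,a)\big)\pi'_s(da)\Big]ds.
\]
The integrand equals $H(s,X_s,\pi'_s)$, where $H(t,y,\varpi)$ is the expression inside \eqref{eq:cha.HJB} evaluated with $u(t,\cdot)$ and $r(t,\cdot,\cdot)$. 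Equation \eqref{eq:cha.HJB} says exactly that $\sup_\varpi H(0,x,\varpi)=0$.

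\textbf{Step 3: passing to the limit.} Since $U$ is compact and $b,r$ are bounded with the continuity supplied by Assumptions \ref{assume.r}--\ref{assume.b.sigma}, and since $u\in\Cc^{1,2}$, the map $(t,y)\mapsto\sup_{\varpi}|H(t,y,\varpi)-H(0,x,\varpi)|$ tends to $0$ as $(t,y)\to(0,x)$. The state also satisfies $X_s\to x$ in probability as $s\to0$, uniformly over controls, because $b$ and $\sigma$ are bounded. Splitting on the event $\{\sup_{s\le\eps}|X_s-x|\le\delta\}$ and its complement, and using local boundedness of the derivatives of $u$ on the complement, we obtain
\[
\frac1\eps\big(J^{\pi^\eps}(x)-J^{\pi^*}(x)\big)\le \sup_{\varpi}H(0,x,\varpi)+o(1)=o(1).
\]
This yields \eqref{eq:def.equi}. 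The complement term requires bounded derivatives of $u$ near $t=0$, or at least polynomial growth together with moment bounds on $X$; I would assume the former, consistent with the regularity class used later in the paper.
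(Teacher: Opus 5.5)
Your proposal is correct and follows the verification route the paper intends. The paper omits this proof as analogous to Theorem \ref{thm:equil.existenceentropy}; in effect it is Parts (i)--(ii) together with Step 2 of Lemma \ref{prop:iteration.estimateentropy}. That route applies It\^o's formula to \eqref{eq:cha.HJB'} to identify $u=V^{\pi^*}$, then makes a first-order expansion of $J^{\pi'\otimes_\eps\pi^*}-J^{\pi^*}$ at $t=0$ and closes it with the maximality in \eqref{eq:cha.HJB}.

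The condition you add ($u$ bounded with $\sup_x|u(T,x)|\to 0$) is genuinely necessary, not a weakness of your argument: $u+c$ also solves the system for any constant $c$, so the statement as written cannot give $V^{\pi^*}=u$. The paper uses this decay silently through \eqref{eq:lm.est7}, whose interior Sobolev estimate leaves out the $\|u\|_{L^p}$ term.
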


\begin{Remark}
	%\eqref{eq:cha.HJB}--\eqref{eq:cha.HJB'} (in the strong sense) together is commonly treated as the equilibrum HJB system which shall be satisfied by a value function of an equilibrium $\pi^*$.
	It is well known that proving the existence of a classical solution to the nonlinear and nonlocal EHJB system \eqref{eq:cha.HJB}–\eqref{eq:cha.HJB'} is extremely challenging. In the existing literature, classical solutions have only been constructed in some specific concrete models, typically via a guess-and-verify approach. 
	
	%In the time-consistent setting, i.e., $r(t,x,a)=e^{-\rho} f(x, a)$, the EHJB system \eqref{eq:cha.HJB}--\eqref{eq:cha.HJB'} simplifies to the classic HJB equation
	%$$
	%0=-\rho u(x)+\frac{1}{2}\tr(\sigma\sigma^T(x) D^2_x u(x))  +\sup_{\varpi\in \Pc(U)} \left\{ \int_U \left[ b(x,a) D_x u(x)+f(x,a) \right] \varpi(da) \right\}
	%$$
	%which admits a unique classic solution. In the contrast, one do not expect that the solution to the extend-HJB system \eqref{eq:cha.HJB}--\eqref{eq:cha.HJB'} is unique, because \eqref{eq:cha.HJB} is more like an initial condition.
	
	In contrast, one of our main findings is that searching for a classical solution is unnecessary. Indeed, the vanishing entropy regularization approach yields the existence of a strong solution (rather than a classical one) to the EHJB system, for which verification arguments remain applicable, thereby establishing the existence of an equilibrium; see Theorems \ref{thm:equi.existence} and \ref{thm:equi.existence1}.
	
\end{Remark}

\section{Existence of Equilibrium under Entropy Regularization}\label{sec:entropy}
In this section, let us first study the time-inconsistent control problem under Shannon entropy regularization on relaxed controls. That is, the regularized value function under an admissible relaxed control $\pi \in  \Pc_c(U)$ is now given by
\be\label{eq:def.Ventropy} 
V^{\pi}_\lambda(t,x)=\E_x\left[\int_0^\infty  \left( r^\pi(t+s, X^\pi_s)+\delta(t+s)\lambda \Hc(\pi_s)\right)ds \right],\quad J^{\pi}_\lambda(x)=V^{\pi}_\lambda(0,x),
\ee 
where $\Hc$ stands for the Shannon entropy: $\Hc(\varpi):= -\int_U \ln(\varpi(a))\varpi(a)da$ for a given density $\varpi\in \Pc_c(U)$. For technical convenience, we impose the following integrability assumptions on the general discounting function $\delta(t)$.
\be\label{eq:assume.delta} 
K_4:=\int_0^\infty (|\delta(t)|+|\delta_t(t)|)dt<\infty,
\ee 
and 
\be\label{eq:assume.deltaholder} 
K_5:=\int_0^\infty \sup_{s< u\leq s+1}\frac{|\delta_t(s)-\delta_t(u)|}{|s-u|^{\alpha/2}}ds<\infty.
\ee 
Note that \eqref{eq:assume.integralr} in Assumption \ref{assume.r}, together with \eqref{eq:assume.delta}, ensures that $V^{\pi}_\lambda$ is well-defined for all admissible policies $\pi \in  \Pc_c(U)$.

\begin{Definition}\label{def:equi.entropy}
	An admissible relaxed control $\pi^*$ is called a regularized equilibrium with the entropy parameter $\lambda$ if for any $x\in \R^d$,
	\begin{equation}\label{eq:def.equientropy}
		\limsup_{\eps\to 0+} \frac{J_\lambda^{\pi'\otimes_{\eps} \pi^*}(x)- J_\lambda^{\pi^*}(x)}{\eps}\leq 0,\quad \forall \text{ admissible }\pi'.
	\end{equation}
\end{Definition}

To address the general existence of classical solution to the EEHJB system \eqref{eq:cha.HJBentropy}--\eqref{eq:cha.HJBentropy'}, we need one more assumption that provides regularity of parameters on action variable $a$ and the action space $U$. To properly state the assumption, we illustrate the following notation for a cone in $\R^\ell$. When $\ell>1$, for any $\gamma\in [0,\pi/2]$, denote 
$$
\Delta_\gamma:=\left\{ a=(a_1,\cdots, a_{\ell})\in \R^\ell: \sum_{i=1}^{\ell-1}u_i^2\leq \tan(\gamma)a_{\ell}^2 \right\}
$$
as the cone with vertex, axis, and angle being $0\in \R^\ell$,  $a_1 =\cdots =a_{\ell-1} =0$, and $\gamma$, respectively, and we write the cone obtained by a rotation of $a+\Delta_\gamma$ in $\R^\ell$ about $a\in \R^{\ell}$ as $\cone(a,\gamma)$.
\begin{Assumption}\label{assume.lipsa.U}
	The functions $a\mapsto b(x,a)$ and $a\mapsto r(t,x,a)$ are uniformly Lipchitz, i.e., 
	$$
	\Theta:=\sup_{(t,x)\in \T\times \R^d} \frac{|b(x,a)-b(x,a')|+|r(t,x, a)|-|r(t,x,a')|}{|a-a'|}<\infty.
	$$
	
	The action space $U$ satisfies a uniform inner-cone test condition: When $\ell>1$, there exists $\zeta$ and $\gamma\in (0,\pi/2]$ such that for any $a\in U$, there is a cone with vertex $a$ and angle $\gamma$ (i.e., $\cone(a,\gamma)$) that satisfies $\cone(a,\gamma)\cap B_\zeta(a)\subset U$. When $\ell=1$, there exists $\zeta>0$ such that for any $a\in U$, either $[a-\zeta, a]$ or $[a, a+\zeta]$ is contained in $U$.
\end{Assumption}

\begin{Remark}
	We stress that the cone test condition is by no means restrictive. Indeed, $U$ being a convex space is a common assumption in the literature to guarantee the well-posedness of stochastic control problem, i.e., to make sure a regular optimal solution exists. Our cone test condition in Assumption \ref{assume.lipsa.U} is strictly weaker than $U$ being convex.
	
	%	{\color{blue}countable is not true, delete this line?}We also want to emphasize that all our analysis work, hence, our results hold for $U$ being finite or countable.
\end{Remark}

\begin{Theorem}\label{thm:equil.existenceentropy}
	Assume  Assumptions \ref{assume.r}, \ref{assume.b.sigma}, \ref{assume.lipsa.U}, \eqref{eq:assume.delta} and \eqref{eq:assume.deltaholder} hold. Then the followings hold.
	\bi
	\item[(i)]	
	If a feedback relaxed control $\pi^*: \R^d\to \Pc_c(U)$ is a regularized equilibrium with entropy parameter $\lambda$ and $V^{\pi^*}_\lambda \in \Cc^{1,2 }_{\alpha/2, \alpha}(\T\times \R^d)$, then $V^{\pi^*}$ solves the following exploratory equilibrium HJB (EEHJB) system
	\begin{align}
		0=& \partial_t u(0,x) +\frac{1}{2}\tr(\sigma\sigma^T(x) D^2_x u(0,x))\notag\\
		&+\sup_{\varpi\in \Pc_c(U)} \left\{ \int_U \left[ b(x,a) D_xu(0,x)+r(0,x,a)-\lambda \ln(\varpi(a)) \right] \varpi(a)da \right\}\label{eq:cha.HJBentropy}  \\
		=& \partial_t u_t(0,x)+\frac{1}{2}\tr(\sigma\sigma^T(x) D^2_x u(0,x) \notag\\
		& +\lambda \ln \left\{ \int_U \exp\left(\frac{1}{\lambda}\left[ b(x,a) D_x u(0,x)+r(0,x,a)\right] \right) \right\} , \forall x \in \R^d;   \notag \\
		0=&\partial_t u(t,x)+\frac{1}{2}\tr(\sigma\sigma^T(x) D^2_x u(t,x)) \notag \\
		&+b^{\pi^*}(x) D_x u(t,x)+r^{\pi^*}(t,x)+\lambda \delta(t) \Hc(\pi^*(x)), \forall (t,x) \in \T\times \R^d  \label{eq:cha.HJBentropy'}, 
	\end{align}
	where it follows from \eqref{eq:cha.HJBentropy} that
	\be\label{eq:cha.HJBpientropy} 
	\pi^*(x,a)=\frac{\exp\left( \frac{1}{\lambda}\left[b(x,a) \cdot D_x u(0,x) +r(0,x,a) \right]\right)  }{\int_U \exp\left( \frac{1}{\lambda}\left[b(x,a')\cdot D_x u(0,x)+r(0,x,a') \right]\right)  da'}  \quad \forall x\in \R, a \in U.
	\ee
	
	\item[(ii)]	Conversely, if a couple $(u,\pi^*)$ satisfies that $u(t,x)\in \Cc^{1,2 }_{\alpha/2, \alpha}(\T\times \R^d)$ being a classic solution to the EEHJB system \eqref{eq:cha.HJBentropy}--\eqref{eq:cha.HJBentropy'} and \eqref{eq:cha.HJBpientropy} holds. Then, $\pi^*$ is a regularized equilibrium with entropy weight $\lambda$ and $u(t,x)=V^{\pi^*}_\lambda(t,x)$. 
	
	\item[(iii)] There exists $\lambda_0>0$ such that for all entropy parameters $0<\lambda\leq \lambda_0$, there exists a pair $(u,\pi^*)$ with $u(t,x)\in \Cc^{1,2}_{\alpha/2,\alpha}(\T\times\R^d)$ solving the EEHJB system \eqref{eq:cha.HJBentropy}--\eqref{eq:cha.HJBentropy'}, and $\pi^*$ satisfies \eqref{eq:cha.HJBpientropy}. As a consequence, $u(t,x)=V^{\pi^*}_\lambda(t,x)$, and $\pi^*$ is an equilibrium under entropy weight $\lambda$. Moreover, the following estimates also hold:
	\be\label{eq:thm.entropy} 
	\begin{aligned}
		&\|V^{\pi^*}_\lambda\|_{\Cc^{0,1 }_{\alpha/2, \alpha}([t,\infty)\times \R^d)}\vee \|V^{\pi^*}_\lambda \|_{W^{1,2, \ul}_{p}([t,\infty)\times \R^d)}\leq A^* \Psi(t)\quad \forall t\in \T;\\
		&\sup_{x\in \R^d}\|V^{\pi^*}_\lambda(\cdot, x)\|_{\Cc^1_{\alpha/2}(\T)}\vee \|V^{\pi^*}_\lambda\|_{\Cc^{1,2 }_{\alpha/2, \alpha}(\T \times \R^d)}\leq A_\lambda.
	\end{aligned}
	\ee
	Here $A^*$ is a constant depending on $K_0,\cdots, K_5$, $\eta$, $\Theta, \zeta, \gamma$ in Assumptions \ref{assume.r}, \ref{assume.b.sigma}, \ref{assume.lipsa.U}, and conditions \eqref{eq:assume.delta}--\eqref{eq:assume.deltaholder}, but independent of $\lambda$, and $A_\lambda$ is another constant that also depends on $\lambda$. $\Psi$ is a deterministic function from $ \T$ to $\T$ that satisfies $\Psi(0)=1$, $\Psi$ decreases and $\lim_{t\to\infty} \Psi(t) = 0$.
	\ei 
\end{Theorem}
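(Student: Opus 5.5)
The plan has three parts, matching items (i), (ii) and (iii). Items (i) and (ii) are verification arguments built on the Feynman–Kac representation and It\^o's formula. Item (iii) is a Schauder fixed-point argument for the map $\Phi_\lambda(w):=V^{\Gamma_\lambda(D_x w(0,\cdot))}_\lambda$.

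\emph{Item (i).} For a feedback $\pi^*$ with $V^{\pi^*}_\lambda\in \Cc^{1,2}_{\alpha/2,\alpha}$, I first apply It\^o's formula to $V^{\pi^*}_\lambda(t+s,X^{\pi^*}_s)$ together with the Markov property. This shows that \eqref{eq:cha.HJBentropy'} holds for every $(t,x)$ as a Feynman–Kac identity. Next, take a constant perturbation $\pi'=\varpi\in\Pc_c(U)$ on $[0,\eps)$. Using the flow property,
\[
J^{\varpi\otimes_\eps\pi^*}_\lambda(x)=\E_x\Big[\int_0^\eps \big(r^{\varpi}(s,X_s)+\delta(s)\lambda\Hc(\varpi)\big)ds+V^{\pi^*}_\lambda(\eps,X^{\varpi}_\eps)\Big].
\]
Applying It\^o's formula on $[0,\eps]$, dividing by $\eps$ and letting $\eps\to0$ gives
\[
\partial_t u(0,x)+\tfrac12\tr(\sigma\sigma^TD^2_xu(0,x))+b^\varpi D_xu(0,x)+r^\varpi(0,x)+\lambda\delta(0)\Hc(\varpi)\le 0,
\]
with equality when $\varpi=\pi^*(x)$. (I read $\delta(0)=1$ as the normalization; otherwise $\lambda$ is replaced by $\lambda\delta(0)$ throughout.) This yields the supremum form in \eqref{eq:cha.HJBentropy}. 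The Gibbs variational formula, $\sup_\varpi\{\int f\,d\varpi+\lambda\Hc(\varpi)\}=\lambda\ln\int_U e^{f/\lambda}$ with unique maximizer proportional to $e^{f/\lambda}$, then gives the log-partition form and \eqref{eq:cha.HJBpientropy}.

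\emph{Item (ii).} Run the same computation in reverse. Feynman–Kac uniqueness for \eqref{eq:cha.HJBentropy'}, using boundedness of $u$ and the decay implied by \eqref{eq:assume.integralr} and \eqref{eq:assume.delta}, gives $u=V^{\pi^*}_\lambda$. For a general adapted $\pi'$, the expansion above shows that the $\limsup$ in \eqref{eq:def.equientropy} is bounded by the Gibbs gap at $x$, which is $\le 0$. Continuity of the coefficients at time $0$ justifies passing to the limit.

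\emph{Item (iii).} This is the main part.
\begin{itemize}
\item[(a)] For $w$ with $D_xw(0,\cdot)\in\Cc_\alpha$, Lemma \ref{lm:pi.est} gives H\"older bounds on $\pi=\Gamma_\lambda(x,D_xw(0,x),\cdot)$. These bounds are uniform in $\lambda$ where the entropy enters, because Lemma \ref{lm:entropy.ln} controls $\Hc(\pi)$ sublinearly in $|D_xw|$.
\item[(b)] Lemma \ref{prop:iteration.estimateentropy} then shows that $V^\pi_\lambda$ solves the linear equation \eqref{eq:cha.HJBentropy'}. It also supplies the $\lambda$-independent bound $A^*\Psi(t)$ in $\Cc^{0,1}_{\alpha/2,\alpha}\cap W^{1,2,\ul}_p$ on $[t,\infty)$. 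This bound comes from interior Schauder/$L^p$ estimates applied on unit cylinders, with source terms whose tail integrals, via $K_1,K_2,K_4,K_5$, define $\Psi$. It further supplies a $\lambda$-dependent $\Cc^{1,2}_{\alpha/2,\alpha}$ bound $A_\lambda$.
\item[(c)] The main obstacle is the feedback loop: the first-order bound on $V$ must not be amplified by the dependence of $\pi$ on $D_xw$. Smallness of $\lambda\le\lambda_0$ and the sublinear entropy growth close this loop, so that the set $\mathcal M_\lambda$ in \eqref{compact-set}, consisting of functions with these bounds, is mapped into itself.
\item[(d)] $\mathcal M_\lambda$ is convex and, by Arzel\`a–Ascoli applied on each $D_N$ with a slightly smaller H\"older exponent $\beta<\alpha$, compact in the weighted norm $\Cc^{1,2,\wg}_{\beta/2,\beta}$.
\item[(e)] Continuity of $\Phi_\lambda$ is Lemma \ref{lm:Phi.continuous}. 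It follows from the local Lipschitz dependence of the Gibbs density on $D_xw$ and from stability of the linear PDE.
\end{itemize}
Schauder's theorem then gives a fixed point $u=\Phi_\lambda(u)$. By construction $u$ satisfies \eqref{eq:cha.HJBentropy'} with $\pi^*=\Gamma_\lambda(D_xu(0,\cdot))$. Evaluating at $t=0$ and using the Gibbs identity gives \eqref{eq:cha.HJBentropy}. The estimates \eqref{eq:thm.entropy} are inherited from step (b), and item (ii) concludes that $\pi^*$ is a regularized equilibrium.
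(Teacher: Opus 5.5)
Your argument for (i), for (iii), and for the It\^o verification half of (ii) follows the paper's route: a constant perturbation with It\^o and the Gibbs variational formula, then $\Phi_\lambda$ on $\Mc_\lambda$, compactness in the weighted $\beta$-norm, and Schauder.

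\textbf{The gap is the identification $u=V^{\pi^*}_\lambda$ in (ii).} It\^o's formula only gives
$u(t,x)=\E_x\big[\int_0^T \big(r^{\pi^*}(t+s,X^{\pi^*}_s)+\lambda\delta(t+s)\Hc(\pi^*(X^{\pi^*}_s))\big)ds\big]+\E_x\big[u(t+T,X^{\pi^*}_T)\big]$,
and you need the last term to vanish as $T\to\infty$. Boundedness of $u$ plus decay of $r$ and $\delta$ does not give this. If $u$ satisfies all hypotheses of (ii), so does $u+c$ for any constant $c\neq 0$:
\begin{itemize}
\item $u+c$ is still in $\Cc^{1,2}_{\alpha/2,\alpha}(\T\times\R^d)$;
\item both EEHJB equations and the Gibbs formula \eqref{eq:cha.HJBpientropy} involve only derivatives of $u$;
\item yet $u+c\neq V^{\pi^*}_\lambda$.
\end{itemize}
In general $u-V^{\pi^*}_\lambda$ is only known to be a bounded solution of the homogeneous backward equation with drift $b^{\pi^*}$, and such solutions need not vanish. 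The equilibrium claim also depends on this identification. Your expansion of $J^{\varpi\otimes_\eps\pi^*}_\lambda-J^{\pi^*}_\lambda$ uses the derivatives of $V^{\pi^*}_\lambda$, whereas \eqref{eq:cha.HJBentropy} is known only for $u$.

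The repair is to add a condition at infinity, for example $\lim_{T\to\infty}\sup_{x}|u(T,x)|=0$. Under it your Feynman--Kac argument is correct, and the stochastic integral is a true martingale because $D_xu$ and $\sigma$ are bounded.

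The paper's proof has the same hole. It takes uniqueness from Lemma~\ref{prop:iteration.estimateentropy}, whose Step~1 bound \eqref{eq:lm.est7} is stated for an arbitrary classical solution. That bound comes from an interior $W^{1,2}_p$ estimate with only the source terms on the right: the lower-order term in $u$ on the larger cylinder has been dropped, and the constant example shows it cannot be. For $u=V^{\pi}_\lambda$ that term is controlled by Feynman--Kac tail integrals, which is why a $\Psi$ built from tail integrals, as in your step (b), is the right object.

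Part (iii) is unaffected. There $u=\Phi_\lambda(u)=V^{\pi^*}_\lambda$ holds by construction, the decay at infinity comes from the $A^*\Psi(t)$ bound, and only the verification half of (ii) is used.
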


\subsection{Proof for Theorem \ref{thm:equil.existenceentropy}}

We begin by proving Part (i) of Theorem \ref{thm:equil.existenceentropy}. The proofs of Parts (ii) and (iii) will be presented after establishing several auxiliary lemmas. In particular, the proof of Part (ii) will follow Lemma \ref{prop:iteration.estimateentropy}, while the proof of Part (iii) will be delivered after Lemma \ref{lm:Phi.continuous}.

\begin{proof}[{\bf Proof for Theorem \ref{thm:equil.existenceentropy} Part (i): }] Suppose $\pi^*: \R^d\to \Pc_c(U)$ is a regularized equilibrium such that $V^{\pi^*}_\lambda\in \Cc^{1,2 }_{\alpha/2, \alpha}(\T\times \R^d)$. We will show that \eqref{eq:cha.HJBentropy}--\eqref{eq:cha.HJBpientropy} hold.
	
	Let us first verify \eqref{eq:cha.HJBentropy'}. For any $(t,x)\in \T\times \R^d$, we have that, for any $\eps>0$,
	$$
	E_x\left[\int_0^\eps \left[ r^{\pi^*}(t+s, X^{\pi^*}_s)+\delta(t+s)\lambda\Hc(\pi^*(X^{\pi^*}_s))\right]ds+V^{\pi^*}_\lambda(t+\eps, X^{\pi^*}_\eps)\right]-V^{\pi^*}_\lambda(t,x)=0.
	$$
	In light that $V^{\pi^*}_\lambda\in \Cc^{1,2}_{\alpha/2, \alpha} (\T\times \R^d)$, the classical It\^o formula yields
	$$
	\begin{aligned}
		0=\frac{1}{\eps}\E_x\bigg[&\int_0^\eps \bigg( \partial_t V^{\pi^*}_\lambda(t+s, X^{\pi^*})+b^{\pi^*}(X^{\pi^*}) D_xV^{\pi^*}_\lambda(t+s, X^{\pi^*}_s) \\ &\qquad +\frac{1}{2}\tr(\sigma\sigma^T(X^{\pi^*}_s) D^2_x V^{\pi^*}_\lambda(t+s, X^{\pi^*}_s))+r^{\pi^*}(t+s, X^{\pi^*}_s) + \delta(t+s)\lambda\Hc(\pi^*(X^{\pi^*}_s)) \bigg)ds\bigg].
	\end{aligned}
	$$
	In view of  Assumptions \ref{assume.b.sigma} and \ref{assume.r} and the fact that $V^{\pi^*}_\lambda\in \Cc^{1,2 }_{\alpha/2, \alpha}(\T\times \R^d)$ again, the right-hand side above converges to 
	$$
	\partial_t V^{\pi^*}_\lambda(t,x)+\frac{1}{2}\tr\left((\sigma\sigma^T)(x) D^2_x V^{\pi^*}_\lambda(t,x)\right) +  b^{\pi^*}(x)D_xV^{\pi^*}_\lambda(t,x)+r^{\pi^*}(t,x) +\delta(t)\lambda\Hc(\pi^*(x)),
	$$
	which shows that \eqref{eq:cha.HJBentropy'} holds under $\pi^*$. 
	
	Next, we deal with \eqref{eq:cha.HJBentropy}.  Taking $t=0$ in \eqref{eq:cha.HJBentropy'} yields
	\be\label{eq:prop.1'}
	\begin{aligned}
		0=&\partial_t V^{\pi^*}_\lambda(0,x)+\frac{1}{2}\tr\left((\sigma\sigma^T)(x)  D^2_x V^{\pi^*}_\lambda(0,x)\right) +  b^{\pi^*}(x)D_x V^{\pi^*}_\lambda(0,x)+r^{\pi^*}(0,x) +\lambda\Hc(\pi^*(x))\\
		\leq &\partial_t V^{\pi^*}_\lambda(0,x)+\frac{1}{2}\tr\left((\sigma\sigma^T)(x)  D^2_x V^{\pi^*}_\lambda(0,x)\right) \\
		& +\sup_{\varpi\in \Pc_c(U)} \left\{ \int_U \left[ b(x,a) D_x V^{\pi^*}_\lambda(0,x)+r(0,x,a) -\lambda\ln(\varpi(a))\right] \varpi(a)da \right\}.
	\end{aligned}
	\ee
	Take an arbitrary $\varpi\in \Pc_c(U)$ with $|\Hc(\varpi)|<\infty$ %\footnote{Here $\varpi$ is understood as a constant policy} 
	and $x\in \R^d$.
	%and the value function $V^{\pi'\otimes_\eps {\pi^*}}(t,x)$ is well-defined, 
	For any $\eps>0$, it holds that
	$$
	J_\lambda^{\varpi\otimes_{\eps}\pi^*}(x)=V^{\varpi\otimes_{\eps}\pi^*}_\lambda(0,x)=\E\left[\int_0^\eps\left[   r^{\varpi}(s, X^{\varpi}_s) +\delta(s)\lambda\Hc(\varpi)) \right]ds +V^{\pi^*}_\lambda\left( \eps, X^{\varpi}_\eps \right)\right].
	$$
	It follows that
	\be\label{eq:prop.0} 
	\begin{aligned}
		&\frac{J^{\varpi\otimes_{\eps}\pi^*}_\lambda(x)-J^{\pi^*}_\lambda(x)}{\eps}=\\
		& \frac{1}{\eps} \left\{\E_x\left[\int_0^\eps \left( r^{\varpi}(s, X^{\varpi}_s)+\delta(s)\lambda\Hc(\varpi)  \right)ds\right] +\E_x\left[V^{\pi^*}_\lambda(\eps, X^{\varpi}_\eps)\right]-V^{\pi^*}_\lambda(0,x) \right\}.
	\end{aligned}
	\ee 
	Again applying It\^o formula to $V^{\pi^*}_\lambda\in\Cc^{1,2}_{\alpha/2, \alpha}(\T\times \R^d)$, we obtain that the limit of the right-hand side as $\eps\to 0$ is
	\be\label{eq:prop.0'}
	\begin{aligned}
		&\partial_t V^{\pi^*}_\lambda(0,x)+\frac{1}{2}\tr\left( \sigma\sigma^T(x)D_x^2V^{\pi^*}_\lambda(0,x) \right)\\
		&+\int_U\left[ b(x,a)D_xV^{\pi^*}_\lambda(0,x)+r(0,x, a)-\lambda\ln(\varpi(a)) \right] \varpi(d)da.
	\end{aligned}
	\ee
	By the definition of regularized equilibrium, we conclude that 
	\bee
	\begin{aligned}
		&\partial_t V^{\pi^*}_\lambda(0,x)+\frac{1}{2}\tr\left(\sigma\sigma^T(x) D^2_x V^{\pi^*}_\lambda(0,x)\right) \\ &+\sup_{\varpi\in \Pc_c(U)} \left\{ \int_U \left[ b(x,a) D_x V^{\pi^*}_\lambda(0,x)+r(0,x,a) -\lambda\ln(\varpi(a)) \right] \varpi(da) \right\}\leq 0.
	\end{aligned}
	\eee 
	This, together with \eqref{eq:prop.1'}, yields that $V^{\pi^*}_\lambda(0,\cdot)$ satisfies \eqref{eq:cha.HJBentropy}. Then, \eqref{eq:cha.HJBpientropy} is a direct consequence of \eqref{eq:cha.HJBentropy} and \eqref{eq:cha.HJBentropy'}.\\
\end{proof}	

As stated in Theorem \ref{thm:equil.existenceentropy} Part (i), a regularized equilibrium with the entropy parameter $\lambda>0$  is a density functional on $D_xV^{\pi^*}(0,x)$ in Gibbs form. For any fixed $\lambda>0$, define $\Gamma_{\lambda}: \R^d\times \R^d\to \Pc_c(U)$ by
\be\label{eq:def.Gammalambda}
\begin{aligned}
	\Gamma_{\lambda}(x,p,a):=& \argmax_{\varpi\in \Pc_c(U)}\left\{ \int_U \left[b(x,a)p+r(0,x,a) -\lambda \ln(\varpi(a))\right]\varpi(a)da \right\}\\
	=& \frac{\exp\left( \frac{1}{\lambda}\left[b(x,a)p +r(0,x,a) \right]\right)}{\int_U \exp\left( \frac{1}{\lambda}\left[b(x,a')p+r(0,x,a') \right]\right) da'}.
\end{aligned}
\ee 
%Then \eqref{eq:cha.HJBpientropy} gives that $\pi^*(x, a)=\Gamma_\lambda(x,D_x V^{\pi^*}(0,x), a)$ if $\pi^*$ is a regularized equilibrium.

The following lemma is borrowed from \cite[Lemma 1]{bayraktar2025relaxed} (also see  \cite[Corollary 4.14]{2025SICON}), which provides a sublinear growth estimate for the entropy term on the Gibbs-form operator $\Gamma_\lambda(x,p,a)$ in terms of $|p|$.
\begin{Lemma}\label{lm:entropy.ln}
	Suppose that Assumption \ref{assume.lipsa.U} holds. There exists a constant $\lambda_0>0$ such that the following estimate holds for all $0<\lambda\leq \lambda_0$,
	\bee
	|\Hc(\Gamma_\lambda(x,p,a))|=\left| \int_U \ln(\Gamma_{\lambda}(x,p,a))\Gamma_{\lambda}(x,p,a)da\right|\leq K_6+K_7|\ln\lambda|+K_8 \ln(1+|p|),\quad \forall x, p\in \R^d,
	\eee
	where $K_6, K_7, K_8$ are positive constants depending on $\ell$, $\Leb(U)$ and the constants $\Theta$, $\zeta$ and $\gamma$ in Assumption \ref{assume.lipsa.U}, but independent of $\lambda$.
\end{Lemma}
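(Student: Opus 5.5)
The quantity to control is $-\Hc(\Gamma_\lambda)=\int_U \ln(\Gamma_\lambda)\Gamma_\lambda\,da$. Write $\varpi:=\Gamma_\lambda(x,p,\cdot)$ and $h(a):=b(x,a)p+r(0,x,a)$, so that $\varpi(a)=e^{h(a)/\lambda}/Z$ with $Z=\int_U e^{h(a')/\lambda}da'$. The plan is to get a lower and an upper bound on $-\Hc(\varpi)$ separately.

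\textbf{Lower bound.} By Jensen's inequality (equivalently, the maximal entropy property on a set of finite measure), $\Hc(\varpi)\le \ln \Leb(U)$. Hence $-\Hc(\varpi)\ge -\ln\Leb(U)$, which is a constant.

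\textbf{Upper bound.} We have $-\Hc(\varpi)=\int \varpi\ln\varpi\le \sup_U\ln\varpi=\frac{1}{\lambda}\sup_U h-\ln Z$. So it suffices to bound $Z$ from below by $e^{M/\lambda}$ times a polynomial factor, where $M:=\sup_U h$. The function $h$ is continuous on the compact set $U$, so $M$ is attained at some $a^*\in U$.

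By Assumption \ref{assume.lipsa.U}, $a\mapsto h(a)$ is Lipschitz with constant $L:=\Theta(1+|p|)$. The assumption as printed has a typo in the $r$ term; I would read it as $|r(t,x,a)-r(t,x,a')|$. It follows that $h(a)\ge M-L|a-a^*|$ on $U$. The inner-cone condition gives a set $C:=\cone(a^*,\gamma)\cap B_\zeta(a^*)\subset U$. For $\rho\le\zeta$, the part of $C$ within distance $\rho$ of $a^*$ has Lebesgue measure $c_{\ell,\gamma}\rho^\ell$; when $\ell=1$, use the interval of length $\rho$ instead.

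Therefore
\[
Z\ge e^{M/\lambda}\int_{C\cap B_\rho(a^*)} e^{-L|a-a^*|/\lambda}\,da\ge e^{M/\lambda}\,c_{\ell,\gamma}\,\rho^\ell e^{-L\rho/\lambda}.
\]
Choose $\rho:=\min\{\zeta,\lambda/L\}$. This gives
\[
\ln Z-\frac{M}{\lambda}\ge \ln c_{\ell,\gamma}-1+\ell\ln\min\{\zeta,\lambda/(\Theta(1+|p|))\}.
\]
Hence
\[
-\Hc(\varpi)\le C+\ell|\ln\lambda|+\ell\ln(1+|p|)+\ell|\ln\Theta|+\ell|\ln\zeta|.
\]
The condition $\lambda\le\lambda_0$ (for instance $\lambda_0=\min\{1,\zeta\Theta\}$) is used only so that the minimum is $\lambda/L$ and $\ln\lambda\le 0$. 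This makes the constants clean but is not essential.

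\textbf{Conclusion and main obstacle.} Combining the two bounds yields the claim with $K_7=K_8=\ell$ and $K_6$ depending on $\ell,\Leb(U),\Theta,\zeta,\gamma$. The only delicate step is the lower bound on $Z$. It needs the maximizer $a^*$ to sit at the vertex of an inner cone of uniform size, so that a set of measure of order $(\lambda/L)^\ell$ near $a^*$ lies in $U$. Without the cone condition (for example, at a cusp of $U$), this polynomial factor could degenerate and the logarithmic growth could fail.
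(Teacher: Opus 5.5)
Your proof is correct: the Jensen bound $\Hc(\varpi)\le\ln\Leb(U)$ controls one side. The other side follows from the lower bound $\ln Z\ge \lambda^{-1}\max_U h+\ln c_{\ell,\gamma}-1+\ell\ln\min\{\zeta,\lambda/(\Theta(1+|p|))\}$, which you get from the $\Theta(1+|p|)$-Lipschitz continuity of $h$ in $a$ and the inner cone at the maximizer; this gives $K_7=K_8=\ell$, up to trivial bookkeeping such as replacing $\Theta$ by $\max\{\Theta,1\}$ and $\ln c_{\ell,\gamma}$ by $-|\ln c_{\ell,\gamma}|$. The paper does not prove this lemma itself but imports it from \cite[Lemma 1]{bayraktar2025relaxed}, and your argument is consistent with its later remark that the coefficient of $\ln(1+|p|)$ is $\ell$ and that the estimate holds for every $\lambda>0$ with a $\lambda$-dependent constant.
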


The following lemma establishes  H\"older norm estimates for key quantities induced by the policy
\be\label{eq:def.gammaw} 
\pi(x,a):= \Gamma_{\lambda}(x, D_x w(x), a),
\ee 
for a given function $w(x)\in \Cc^{1 }_{\alpha}(\R^d)$. 
\begin{Lemma}\label{lm:pi.est}
	Suppose Assumptions \ref{assume.r}, \ref{assume.b.sigma} hold. If $w(x)\in \Cc^{1 }_{\alpha}(\R^d)$ satisfies $\|w\|_{\Cc^{1}_\alpha(\R^d)}\leq M$ for a finite constant $M\geq 1$,  then the following estimates hold for the policy $\pi$ in \eqref{eq:def.Gammalambda} :
	$$
	\begin{aligned}
		&\|\lambda \Hc(\pi)\|_{\Cc_{\alpha} (\R^d)}\leq A_0 M \exp(\frac{A_0M}{\lambda}), \quad  \|b^{\pi}\|_{\Cc_{\alpha} (\R^d)} \vee \|r^{\pi}\|_{\Cc_{\alpha/2, \alpha} (\T\times \R^d)}\leq \frac{A_0 M}{\lambda}\exp(\frac{A_0M}{\lambda}),
	\end{aligned}
	$$
	where $A_0$ is a constant depending on $d$, $\Leb(U)$ and the parameters $K_0, K_1, K_3$ in Assumptions \ref{assume.r} and \ref{assume.b.sigma}, but independent of $\lambda\leq \lambda_0$ and $M$.
\end{Lemma}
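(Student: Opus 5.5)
The plan is to write every quantity as a smooth function of the pair $(x,D_xw(x))$ and then track the constants. Set
\[
h(x,a):=\tfrac1\lambda\bigl[b(x,a)\cdot D_xw(x)+r(0,x,a)\bigr],\qquad Z(x):=\int_U e^{h(x,a)}\,da,
\]
so that $\pi(x,a)=e^{h(x,a)}/Z(x)$.

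\emph{Step 1 (pointwise bounds).} By Assumptions \ref{assume.r} and \ref{assume.b.sigma}, and since $\|w\|_{\Cc^1_\alpha}\le M$, we have $|h|\le (K_3M+K_0)/\lambda\le A M/\lambda$, using $M\ge 1$. Hence
\[
\pi(x,a)\in\Bigl[\Leb(U)^{-1}e^{-2AM/\lambda},\ \Leb(U)^{-1}e^{2AM/\lambda}\Bigr].
\]
The sup-norms of $b^\pi$ and $r^\pi$ are then bounded by $K_3$ and $K_0$. For the entropy, $|\lambda\ln\pi|\le 2AM+\lambda|\ln\Leb(U)|$, so $\|\lambda\Hc(\pi)\|_{\Cc^0}\le A_0M$. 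This is well within the claimed bounds.

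\emph{Step 2 (H\"older difference of the Gibbs density).} For $|x-y|\le1$ I would estimate
\[
|h(x,a)-h(y,a)|\le \frac1\lambda\Bigl(|b(x,a)-b(y,a)|\,|D_xw(x)|+|b(y,a)|\,|D_xw(x)-D_xw(y)|+|r(0,x,a)-r(0,y,a)|\Bigr)\le \frac{C M}{\lambda}|x-y|^\alpha.
\]
Writing $e^{h(x)}-e^{h(y)}$ via the mean value theorem gives $|e^{h(x)}-e^{h(y)}|\le e^{AM/\lambda}\frac{CM}{\lambda}|x-y|^\alpha$. The same bound holds for $Z$. Using $Z\ge \Leb(U)e^{-AM/\lambda}$ and the quotient rule, we get
\[
|\pi(x,a)-\pi(y,a)|\le \frac{CM}{\lambda}e^{c AM/\lambda}|x-y|^\alpha.
\]
A cleaner route is to work with $\ln\pi=h-\ln Z$: its H\"older constant is at most $2CM/\lambda\cdot|x-y|^\alpha$, because $\ln Z$ is a log-sum-exp whose increments are bounded by $\sup_a|h(x,a)-h(y,a)|$. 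Then $|\pi(x)-\pi(y)|\le \max\pi\cdot|e^{\ln\pi(x)-\ln\pi(y)}-1|$, and the bound on $\max\pi$ from Step 1 yields the exponential factor. Absorbing constants and renaming $A_0$ so that $cA$ is replaced by $A_0$ gives the prefactor $\exp(A_0M/\lambda)$.

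\emph{Step 3 (assemble).} For $b^\pi(x)-b^\pi(y)=\int_U(b(x,a)-b(y,a))\pi(x,a)da+\int_U b(y,a)(\pi(x,a)-\pi(y,a))da$, the first integral is at most $K_3|x-y|^\alpha$ and the second is at most $K_3\Leb(U)\frac{CM}{\lambda}e^{A_0M/\lambda}|x-y|^\alpha$. For $r^\pi(t,x)$ the time increment involves only $r$, since $\pi$ does not depend on $t$, so it is controlled by $K_0$ and the parabolic seminorm follows in the same way. For the entropy, I would split
\[
\lambda\Hc(\pi(x))-\lambda\Hc(\pi(y))=-\int_U\lambda(\ln\pi(x)-\ln\pi(y))\,\pi(x)\,da-\int_U\lambda\ln\pi(y)\,(\pi(x)-\pi(y))\,da.
\]
The first term is at most $2CM|x-y|^\alpha$ by Step 2, with the $\lambda$ cancelling. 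The second is at most $(2AM+C)\cdot\Leb(U)\frac{CM}{\lambda}e^{A_0M/\lambda}|x-y|^\alpha$.

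The main obstacle is this last term: it produces $M^2/\lambda$ rather than the claimed $M$. I would handle it by bounding $M/\lambda\le e^{M/\lambda}$ and enlarging $A_0$, so that $M\cdot\frac{M}{\lambda}e^{A_0M/\lambda}\le M e^{(A_0+1)M/\lambda}$. Renaming $A_0$ then gives the stated form, with $A_0$ depending only on $d,\Leb(U),K_0,K_1,K_3$. Lemma \ref{lm:entropy.ln} is not needed here, since the crude exponential bounds suffice.
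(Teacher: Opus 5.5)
Your proof is correct and follows essentially the same route as the paper. Both arguments use two-sided exponential bounds on the Gibbs density, H\"older bounds on the exponent, on $e^{h}$ and on the normalizer (the log-sum-exp increment is bounded by $\sup_a|h(x,a)-h(y,a)|$), and the same two-term split of the entropy increment. The only cosmetic difference is that the paper bounds the second entropy term by $\sup_a|\pi(y,a)/\pi(x,a)-1|\,|\ln\pi(y,a)|$, integrating against the probability density $\pi(x,\cdot)$, rather than by $\Leb(U)\sup_a|\pi(x,a)-\pi(y,a)|$; it also absorbs the extra factor $M/\lambda$ into the exponential silently (``$A_0$ may vary from line to line''), whereas you do this explicitly.
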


\begin{proof}
	Throughout this proof, let $|x-y|\leq 1$ and $A_0$ be a generic positive constant depending on $d$, $\Leb(U)$ and $K_0, K_1, K_3$ in Assumptions \ref{assume.r} and \ref{assume.b.sigma} but independent of $\lambda$ and $M$, which may vary from line to line.
	
	Let $g(x, a):=\frac{1}{\lambda}[b(x,a)D_x w(x)+r(0,x,a)]$. A direct calculation gives that
	\be\label{eq:lm.piest.0} 
	\sup_{a\in U}\|g(\cdot,a)\|_{\Cc_{\alpha}(\R^d)}\leq \frac{A_0 M}{\lambda},
	\ee 
	which implies that
	\begin{equation}\label{eq:lm.piest.1}   
		\exp(\frac{-A_0 M}{\lambda})  \leq \pi(x,a) \leq \exp(\frac{A_0 M}{\lambda}).
	\end{equation}
	This further yields that 
	\be\label{eq:lm.piest.3}   
	\sup_{a\in U}[\exp(g(\cdot, a))]_{\Cc_{\alpha}(\R^{d})}\leq \frac{A_0 M}{\lambda}\exp(\frac{A_0 M}{\lambda}).
	\ee 
	Note that 
	$$
	\begin{aligned}
		&|\pi(x,a)-\pi(y,a)|\\
		&=\left| \frac{  \exp[g(x,a)]\int_U \exp[g(y,a')]da' -   \exp[g(y,a)]\int_U \exp[g(x,a')]da' }{   \int_U \exp[g(x,a')]da'\int_U \exp[g(y,a')]da' } \right|\\
		&\leq \left|  \frac{ \int_U \left( \exp [g(y,a')]-\exp[g(x, a')]\right)da'  }{\int_U \exp[g(y,a')]da'} \right|   \pi(x,a)+  \left|  \frac{ \exp[g(x,a)]-\exp[g(y, a)]  }{\int_U \exp[g(y,a')]da'} \right|.
	\end{aligned}
	$$
	This, together with \eqref{eq:lm.piest.1} and \eqref{eq:lm.piest.3}, yields that
	\be\label{eq:lm.piest.5}     
	\sup_{a\in U}[\pi(\cdot, a)]_{\Cc_{\alpha}(\R^d)}\leq \frac{A_0 M}{\lambda}\exp(\frac{A_0M}{\lambda})
	\ee 
	By \eqref{eq:lm.piest.1} again and the fact that $\pi(x, a)$ is a probability density on $U$ for each $x$, we deduce that 
	$$
	\left|\int_U \ln(\pi(x,a))\pi(x,a)da \right|\leq \frac{A_0 M}{\lambda}.
	$$
	Also, it holds that
	\be\label{eq:lm.piest.5'} 
	\begin{aligned}
		&\left| \int_U \ln(\pi(x,a))\pi(x, a)da  - \int_U \ln(\pi(y,a))\pi(y, a)da  \right|\\
		&\leq \sup_{a\in U} \left|   \ln(\pi(x,a))  -  \ln(\pi(y,a)) \right| +\sup_{a\in U} \left| \frac{\pi(y,a)}{\pi(x,a)} - 1 \right| |\ln(\pi(y,a))|.
	\end{aligned}
	\ee 
	In view of \eqref{eq:lm.piest.0}--\eqref{eq:lm.piest.3}, the first term on the right-hand side above is estimated by
	$$
	\begin{aligned}
		&\left|   \ln(\pi(x,a))  -  \ln(\pi(y,a)) \right|  \\
		&\leq\left| g(x,a)-g(y,a)\right|+ \left| \ln \left( \int_U \exp[g(x, a)]da \right) -\ln \left( \int_U \exp[g(y,a)]da \right) \right|
		\leq \frac{A_0 M}{\lambda} |x-y|^{\alpha}.
	\end{aligned}
	$$
	To estimate the second term on the right-hand side of \eqref{eq:lm.piest.5'}, let us assume without loss of generality that $\pi(y,a)\geq \pi(x,a)$. Then, 
	$$
	\begin{aligned}
		\left| \frac{\pi(y,a)}{\pi(x,a)} - 1 \right|  \leq & \frac{\exp[g(y,a)]\int_U \exp\left[g(y,a')  + [g(\cdot, a)]_{\alpha} |x-y|^{\alpha} \right] da' }{  \int_U \exp[g(y,a')]da' \exp[g(x,a)]}  - 1\\
		= & \exp[g(y,a)-g(x,a)]\exp\left[  \sup_{a\in U} [g(\cdot,a)]_{\Cc_{\alpha}(\R^d)} |x-y|^{\alpha}   \right]-1\\
		\leq & \frac{A_0 M}{\lambda}\exp(\frac{A_0M}{\lambda}) |x-y|^{\alpha}.
	\end{aligned}
	$$
	Combining above with \eqref{eq:lm.piest.1} and \eqref{eq:lm.piest.5'}, we conclude that 
	$$
	\|\lambda \Hc(\pi)\|_{\Cc_{\alpha} (\R^d)}\leq A_0 M \exp(\frac{A_0M}{\lambda}).
	$$
	By virtue of Assumptions \ref{assume.b.sigma} and \ref{assume.r}, \eqref{eq:lm.piest.1} and\eqref{eq:lm.piest.5}, one can get
	$$
	\|b^{\pi}\|_{\Cc_{\alpha} (\R^d)}, \|r^{\pi}\|_{\Cc_{\alpha/2, \alpha} (\T\times \R^d)}\leq \frac{A_0 M}{\lambda}\exp(\frac{A_0M}{\lambda}).
	$$
\end{proof}

\begin{Lemma}\label{prop:iteration.estimateentropy}
	Let Assumptions \ref{assume.r}, \ref{assume.b.sigma}, \ref{assume.lipsa.U}, and conditions \eqref{eq:assume.delta}--\eqref{eq:assume.deltaholder} hold. Let $ \lambda_0$ be given by Lemma \ref{lm:entropy.ln} and fix $\lambda \in (0, \lambda_0]$.  For a given $w(x)\in \Cc^{1 }_{\alpha}(\R^d)$, consider the feedback control 
	$$\pi(x,a):= \Gamma_{\lambda}(x, D_x w(x), a).$$
	Then $V^{\pi}_\lambda(t,x)$ defined in \eqref{eq:def.Ventropy} is the unique classic solution to the following PDE
	\be\label{eq:prop.iteraPDE}  
	\partial_t u(t,x)+  \frac{1}{2}\tr\left((\sigma\sigma^T)(x) D^2_x u(t,x)\right)+ b^{\pi}(x)D_x u(t,x) +r^{\pi}(t,x)+\lambda \delta(t)\Hc(\pi(x))=0, \,\, \forall (t,x) \in \T \times \R^d.
	\ee 
	And there exists a function $\Psi: \T\to \T$ and constants $A^*, A_\lambda$, satisfying:
	\begin{itemize}
		\item $\Psi(0)=1$, and $\Psi$ decreases with $\lim_{t\to\infty} \Psi(t) = 0$,
		\item $A^*$ depends only on $d, \ell$, $\alpha$, $\Leb(U)$ and the parameters $K_0,\cdots,  K_5$, $\eta$, $\Theta, \zeta, \gamma$ in Assumptions \ref{assume.r}, \ref{assume.b.sigma}, \ref{assume.lipsa.U}, and conditions \eqref{eq:assume.delta}--\eqref{eq:assume.deltaholder}, but independent of $\lambda\leq \lambda_0$; $A_\lambda$ further depends on $\lambda$,
	\end{itemize}
	such that if $\|w\|_{\Cc^{1 }_\alpha (\R^d)}\leq A^*$, then the following estimate holds: 
	\begin{align}
		&\|V^{\pi}_\lambda\|_{\Cc^{0,1 }_{\alpha/2, \alpha}([t,\infty)\times \R^d)}\vee \|V^{\pi}_\lambda \|_{W^{1,2, \ul}_{p}([t,\infty)\times \R^d)}\leq A^* \Psi(t)\quad \forall t\in \T \label{eq:prop.est.t};\\
		&\sup_{x\in \R^d}\|V^{\pi}_\lambda(\cdot, x)\|_{\Cc^1_{\alpha/2}(\T)}\leq A^*,\quad  \|V^{\pi}_\lambda\|_{\Cc^{1,2 }_{\alpha/2, \alpha}(\T\times \R^d)}\leq A_\lambda \label{eq:prop.est}.
	\end{align}
\end{Lemma}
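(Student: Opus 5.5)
The plan is to treat \eqref{eq:prop.iteraPDE} as a linear parabolic equation in which the coefficients $b^\pi$, $r^\pi$ and $\lambda\delta\Hc(\pi)$ are frozen. Lemma \ref{lm:pi.est} applied with $M=\|w\|_{\Cc^1_\alpha}$ shows that these coefficients are H\"older continuous, with norms of order $\frac{A_0M}{\lambda}e^{A_0M/\lambda}$. This is enough for classical Schauder theory, but it is not uniform in $\lambda$. The $\lambda$-free bounds must therefore come only from quantities that are controlled independently of $\lambda$:
\begin{itemize}
\item $|b^\pi|\le K_3$, since $\pi$ is a probability density;
\item $|r^\pi(t,\cdot)|\le \sup_{x,a}|r(t,x,a)|$;
\item the entropy term, which by Lemma \ref{lm:entropy.ln} satisfies $|\lambda\delta(t)\Hc(\pi)|\le |\delta(t)|\lambda(K_6+K_7|\ln\lambda|+K_8\ln(1+A^*))$. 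This is bounded by $C|\delta(t)|$ uniformly in $\lambda\le\lambda_0$, because $\lambda|\ln\lambda|$ is bounded.
\end{itemize}

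\textbf{Step 1: existence and uniqueness.} Since the coefficients are only H\"older and not Lipschitz, I would use the nondegeneracy \eqref{eq:asume.elliptic} to obtain a unique weak solution of the SDE, and strong uniqueness via Zvonkin/Veretennikov for bounded measurable drift. Next, on the finite horizon $[0,T]$ I would solve the Cauchy problem with terminal value $0$ using parabolic Schauder theory (Friedman/Lunardi), and let $T\to\infty$. The integrability of $K_1,K_4$ makes the tail $\int_T^\infty$ vanish uniformly, so the limits exist and satisfy the equation. Applying It\^o's formula to the $\Cc^{1,2}$ solution on $[0,T]$ and sending $T\to\infty$ identifies it with $V^\pi_\lambda$. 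Uniqueness in the class of bounded classical solutions follows from the same It\^o/Feynman--Kac argument.

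\textbf{Step 2: $\lambda$-uniform estimates \eqref{eq:prop.est.t}.} Define
$$\Psi(t)\propto \int_t^\infty\Big(\sup_{x,a}|r|+\sup_{x,a}|r_t|+|\delta|+|\delta_t|\Big)ds,$$
normalized so that $\Psi(0)=1$; it is decreasing and tends to zero. The sup bound $|V^\pi_\lambda(t,x)|\le C\Psi(t)$ follows directly from the stochastic representation. For the derivative bounds I would not use Schauder. Instead I would apply interior $W^{1,2}_p$ (Calder\'on--Zygmund) estimates on each unit cylinder $D_1(t,x)$, for which only $b^\pi\in L^\infty$ with $\|b^\pi\|_\infty\le K_3$ and a bounded right-hand side are needed:
$$\|V\|_{W^{1,2}_p(D_1)}\le C\big(\|V\|_{L^\infty(D_2)}+\|r^\pi+\lambda\delta\Hc(\pi)\|_{L^p(D_2)}\big)\le C\Psi(t).$$
Because $p=\frac{d+2}{1-\alpha}>d+2$, the Sobolev--Morrey embedding $W^{1,2}_p\hookrightarrow \Cc^{0,1}_{\alpha/2,\alpha}$ yields the first inequality of \eqref{eq:prop.est.t}.

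\textbf{Step 3: time regularity, the first bound in \eqref{eq:prop.est}.} The approach is to differentiate the representation in $t$:
$$\partial_tV(t,x)=\E_x\int_0^\infty\big(r_t^\pi(t+s,X_s)+\lambda\delta_t(t+s)\Hc(\pi(X_s))\big)ds.$$
This holds because the law of $X^\pi$ does not depend on $t$. The bound $|\partial_t V|\le C(K_1+K_4)$ follows, and the $\alpha/2$-H\"older seminorm in $t$ follows from $K_2$ and $K_5$ together with the same entropy bound. These estimates are uniform in $x$ and $\lambda$. The constant $A^*$ is then chosen larger than all constants produced so far. The only possible circularity is through $\ln(1+A^*)$ in the entropy bound, and since this grows only logarithmically in $A^*$, a large enough $A^*$ closes the estimate.

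\textbf{Step 4: the $\lambda$-dependent bound $A_\lambda$.} Once the right-hand side and $b^\pi$ have $\Cc_{\alpha}$ norms of size $A_\lambda$ (Lemma \ref{lm:pi.est} with $M=A^*$), interior Schauder estimates on unit cylinders, uniform in the center, combined with the sup bound give the $\Cc^{1,2}_{\alpha/2,\alpha}$ bound.

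\textbf{Main obstacle.} I expect the hardest step to be Step 2: obtaining the gradient-Hölder bound without any dependence on $\lambda$, given that the Hölder norms of $b^\pi$ blow up like $e^{A^*/\lambda}$. The key point is that the $W^{2}_p$ theory with merely bounded drift avoids this blow-up. This requires continuous (indeed H\"older, from $\sigma$) leading coefficients, which Assumption \ref{assume.b.sigma} provides, and estimates with constants that are uniform across translated cylinders.
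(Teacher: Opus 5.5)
The one step that fails is uniqueness ``in the class of bounded classical solutions''. The equation \eqref{eq:prop.iteraPDE} involves $u$ only through its derivatives, so $V^\pi_\lambda+c$ is again a bounded classical solution for every constant $c$, even in $\Cc^{1,2}_{\alpha/2,\alpha}(\T\times\R^d)$. Your It\^o argument yields
$u(t,x)=\E_x\big[\int_0^T\big(r^\pi(t+s,X^\pi_s)+\lambda\delta(t+s)\Hc(\pi(X^\pi_s))\big)ds\big]+\E_x[u(t+T,X^\pi_T)]$.
This identifies $u$ with $V^\pi_\lambda$ only if the last term vanishes as $T\to\infty$, and boundedness does not give that. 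You need a condition at $t=\infty$: for example, restrict to bounded classical solutions with $\sup_x|u(t,x)|\to0$ as $t\to\infty$, which $V^\pi_\lambda$ satisfies by your Step~2. The paper's proof has the same defect in disguise. It derives this decay for an arbitrary classical solution from an interior Sobolev estimate that omits the lower-order term $\|u\|$ on the right-hand side, and that estimate is false for $u=V^\pi_\lambda+c$. So the uniqueness part of the lemma only holds in such a restricted class.

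Otherwise your route is the paper's:
\begin{itemize}
\item $\lambda$-free bounds from interior $W^{1,2}_p$ estimates whose constants see only $\|b^\pi\|_\infty\le K_3$, not the $e^{A^*/\lambda}$-sized H\"older norms;
\item the parabolic embedding with $p=\frac{d+2}{1-\alpha}$;
\item time regularity by differentiating the representation in $t$, using $K_2$ and $K_5$;
\item closing the choice of $A^*$ through the logarithmic growth of the entropy;
\item Schauder estimates combined with Lemma~\ref{lm:pi.est} for $A_\lambda$.
\end{itemize}

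Two of your deviations are genuine improvements. First, you actually construct the $\Cc^{1,2}$ solution via finite-horizon Cauchy problems and $T\to\infty$. The paper instead applies It\^o's formula to $V^\pi_\lambda$ before its regularity is known.

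Second, you keep $\|V\|_{L^\infty(D_2)}$ in the interior estimate, and you take $\Psi$ to be a normalized integral tail including $|r_t|$ and $|\delta_t|$. That tail then also dominates $\sup_{s\ge t}|r(s,\cdot,\cdot)|$ and $\sup_{s\ge t}|\delta(s)|$. The paper's $\Psi$ is a ratio of sup-tails, which is too small. For instance, take $r=\delta=(1+t)^{-2}$ (independent of $x,a$), $U=[0,1]^\ell$ and $w=0$. Then the policy is uniform, $\Hc(\pi)=0$ and $V^\pi_\lambda(t,x)=(1+t)^{-1}$, while the paper's $\Psi(t)=(1+t)^{-2}$. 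So \eqref{eq:prop.est.t} fails for the paper's $\Psi$ but holds for yours.
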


\begin{proof}
	%Fix $0<\lambda\leq \lambda_0$. Take $w\in \Cc^{1,2}_{\alpha/2, \alpha}(\T\times \R^d)$, and set $\pi(x,a):=\Gamma_{\lambda}(x, D_x w(0,x))$. 
	Throughout the proof, let $A_0$ be a generic positive constant depending on $d, \alpha$ and $K_1,\cdots, K_5$, $\eta$, $\Theta, \zeta, \gamma$ in Assumptions \ref{assume.r}, \ref{assume.b.sigma}, \ref{assume.lipsa.U} and conditions \eqref{eq:assume.delta}--\eqref{eq:assume.deltaholder}, but independent of $\lambda$ and norms of all orders of derivatives of $w$, which may vary from line to line.  
	
	{\bf Step 1.} We first provide estimates on the solution to \eqref{eq:prop.iteraPDE}. Let $u(t,x)$ be a classic solution to \eqref{eq:prop.iteraPDE}. By Lemma \ref{lm:entropy.ln}, 
	\be\label{eq:prop.est1} 
	|\lambda\Hc(\pi(x))|\leq A_0\left( 1+\ln\left(1+\|D_x w\|_{\Cc^0(\R^d)} \right) \right).
	\ee
	Define 
	\be\label{eq:lm.est5'}  
	\Psi(t):= \frac{ \sup_{x\in \R^d, a\in U}\|r(\cdot,x,a)\|_{L^\infty([t,\infty))} \vee \|\delta\|_{L^\infty([t,\infty))}}{\sup_{x\in \R^d, a\in U}\|r(\cdot,x,a)\|_{L^\infty(\T)} \vee \|\delta\|_{L^\infty(\T)}},
	\ee
	then $\Psi(0)=1$, $\Psi(t)$ decreases, and we conclude from Assumption \ref{assume.r} and \eqref{eq:assume.delta} that 
	\be\label{eq:lm.est5}    
	\lim_{t\to\infty} \Psi(t)=0.
	\ee
	Recall that $p= \frac{d+2}{1-\alpha}$. For an arbitrary fixed $(t_0, x_0)\in \T\times \R^d$, Sobolev embedding (see, e.g., \cite[II.3]{ladyzhenskaia1968linear}) gives that 
	\be\label{eq:lm.embed} 
	\|u\|_{\Cc^{0,1}_{\alpha/2 ,\alpha}(D_1(t_0, x_0))}\leq A_0 \|u\|_{W^{1, 2}_p(D_2(t_0, x_0))}.
	\ee 
	It follows from Sobolev estimate (see, e.g., \cite[Chapter 2, Section 4, Lemma 4]{krylov2008lectures}) that
	\bee
	\begin{aligned}
		\|u\|_{W_p^{1,2}(D_2(t_0, x_0))}
		&\leq A_0\bigg( \|r^\pi\|_{L^\infty(D_3(t_0,x_0))} +\|\lambda\delta\Hc(\pi)\|_{L^\infty(D_3(t_0, x_0))}  \bigg).
	\end{aligned}
	\eee
	By combining this with \eqref{eq:prop.est1} \eqref{eq:lm.est5'} and \eqref{eq:lm.embed}, we conclude for any $t\in \T$, %and \eqref{eq:prop.tnorm},
	$$
	\begin{aligned}
		&\sup_{(t_0,x_0)\in [t,\infty)\times \R^d} \left(\|u\|_{W^{1,2}_{p}(D_1(t_0, x_0))}\vee\|u \|_{\Cc^{0,1}_{\alpha/2,\alpha}(D_1(t_0, x_0))} \right)\\
		&\leq A_0 \left(\sup_{a\in U}\|r(\cdot,a)\|_{[t,\infty)\times \R^d} \vee \|\delta\|_{L^\infty([t,\infty))} \right) \left(1+\ln\left(1+\|D_x w\|_{\Cc^0(\R^d)} \right)\right).\\
		&\leq A_0 \left(\sup_{a\in U}\|r(\cdot,a)\|_{\T\times \R^d} \vee \|\delta\|_{L^\infty(\T)} \right) \Psi(t) \left(1+\ln\left(1+\|D_x w\|_{\Cc^0(\R^d)} \right)\right). %\quad \text{uniformly over $(t_0, x_0)\in [t,\infty)$},
	\end{aligned}
	$$
	This together with \eqref{eq:assume.integralr} and \eqref{eq:assume.delta} yields, for any $t\in \T$ that,
	\be\label{eq:lm.est7}   
	\|u\|_{W^{1,2,\ul}_{p}([t,\infty)\times \R^d)}\vee\|u \|_{\Cc^{0,1}_{\alpha/2,\alpha}([t,\infty)\times \R^d)} \leq A_0 \Psi(t) \left( 1+\ln \left( 1+\|w\|_{\Cc^1(\R^d)} \right)\right).
	\ee

	{\bf Step 2.} Next, we show $V^{\pi}_\lambda$ is the unique classic solution to \eqref{eq:prop.iteraPDE}. Since $\|w\|_{\Cc^1_\alpha(\R^d)}<\infty$, by Assumptions \ref{assume.r}, \ref{assume.b.sigma}, \eqref{eq:assume.delta} and Lemma \ref{lm:pi.est}, we get that 
	$$
	\begin{aligned}
		&\|b^{\pi}\|_{\Cc^0_\alpha(\R^d)}, \;
		%\;  \| (\sigma^2)^{\pi(x)}(x)\|_{\Cc^0_\alpha(\R^d)} <\infty\\ 
		\|r^{\pi}\|_{\Cc_{\alpha/2, \alpha}(\T\times \R^d)},  \; \|\delta\lambda\Hc(\pi)\|_{\Cc_{\alpha/2, \alpha}(\T\times \R^d)}<\infty.
	\end{aligned}
	$$
	%the functions $x\mapsto b^{\pi(x)}(x), \sigma^{\pi(x)}(x)$, and the function $(t,x)\mapsto r^{\pi(x)}(t,x), \delta(t)\lambda\Hc(\pi(x))$ are globally bounded and H\"older continuous on $\R$, and on $\T\times \R$, respectively. 
	%Rewrite $b^{\pi(x)}(x)$, $r^{\pi(x)}(t, x)$ by $b^{\pi}(x)$, $r^{\pi}(t, x)$ respectively for short. 
	Then the SDE
	$
	dX^{\pi}_t= b^\pi(X^\pi_t)dt+\sigma(X^\pi_t)dW_t
	$
	admits a unique strong solution. A standard application of It\^o's formula yields that $V^{\pi}_\lambda(t,x)$ satisfies the parabolic PDE \eqref{eq:prop.iteraPDE}. Now suppose $u$ is a classic solution to \eqref{eq:prop.iteraPDE}, and we show $V^{\pi}_\lambda=u$. Take an arbitrary $(t,x)\in \T\times \R^d$. Applying It\^o- formula to $u(t+s, X^{\pi}_s)$ yields
	\begin{align*}
		du(t+s, X^{\pi}_s)
		= &\left(\partial_t u(t+s, X^{\pi}_s)+ b^{\pi}(x) D_x u(t+s,X^{\pi}_s)+\frac{1}{2}\tr\bigg((\sigma\sigma^T)(X^{\pi}_s) D^2_x u(t+s,X^{\pi}_s) \bigg)\right) ds\\
		& + \sigma(X^{\pi}_s)D_xu(X^{\pi}_s)dW_s.
	\end{align*}
	%Due to 
	%that $X^{\pi^*}_s$ has a local H\"older continuous density for all $s>0$, and 
	%that the $Y_s:=\int_0^s \sigma(X^{\pi^*})D_x u(t+r, X^{\pi^*}_r)dW_r$ is a martingale, 
	Taking expectations on both sides leads to
	$$
	\begin{aligned}
		u(t,x)=& \E_x\bigg[  \int_0^T \left(u_t(t+s, X^{\pi}_s)+ b^{\pi}(x) D_x u(t+s,X^{\pi}_s)+\frac{1}{2}\tr\left((\sigma\sigma^T)(X^{\pi}_s) D^2_x u(t+s,X^{\pi}_s)\right) \right) ds \\
		%	&+\E_x\left[ \int_0^T \sqrt{(\sigma^2)^{\pi^*}(X^{\pi^*}_s)}v^\infty_{x}(t+s, X^{\pi^*}_s)dW_s \right]\\
		&\qquad +u(t+T, X^{\pi^\infty}_T) \bigg]\\
		=& \E_x\left[  \int_0^T \left( r^{\pi}(t+s,X^{\pi}_s) +\delta(t+s)\lambda\Hc(\pi(X^\pi_s)) \right) ds \right]+\E_x\left[ u(t+T, X^{\pi}_T) \right]
	\end{aligned}
	$$
	where the second equality follows from \eqref{eq:prop.iteraPDE}. By \eqref{eq:lm.est5} and \eqref{eq:lm.est7}, 
	$\lim_{T\to\infty}\ E_x\left[u(t+T, X^{\pi}_T) \right] = 0$. Therefore, by \eqref{eq:assume.integralr} in Assumption \ref{assume.r} and the Dominated Convergence Theorem, we conclude that the last line above converges to
	$\E_x\left[  \int_0^\infty \left(r^{\pi}(t+s,X^{\pi}_s)+\delta(t+s)\lambda\Hc(\pi(X^\pi_s)) \right)  ds \right]$. Consequently, it holds that $V^{\pi^*}_\lambda(t,x)=u(t,x)$ on $\T\times \R^d$. 
	
	%Moreover, by standard Schauder estimate, $\|V^{\pi}_\lambda\|_{\Cc^{1,2}_{\alpha/2, \alpha}(\T\times \R^d)}<\infty$.

	{\bf Step 3.} We now prove \eqref{eq:prop.est.t} and \eqref{eq:prop.est}. By \eqref{lm:entropy.ln}, for each $(t,x)\in \T\times \R^d$, it holds that
	\begin{align}
		\|V^{\pi}_\lambda\|_{\Cc^0(\T\times \R^d)} 
		%=&\left|\E_x\left[\int_0^\infty \left(r^{\pi}(t+s, X^\pi_s)+\delta(t+s)\lambda \Hc(\pi(X^\pi_s)) \right) ds \right] \right
		\leq &\int_{0}^\infty \left[\sup_{(x, a)\in \R^d\times U}| r(s, x, a)|+ \delta(s)A_0 \bigg(1+\ln\big(1+\|D_xw\|_{\Cc^0(\R^d)} \big) \bigg) \right] ds \notag\\
		\leq &A_0 \big(1+\ln\left(1+\|D_x w\|_{\Cc^0(\R^d)} \right)\big), \label{eq:prop.V}\\
		|\partial_t V^{\pi}_\lambda(t,x)|
		=& \E_x\left[ \int_{0}^\infty \left(\partial_t  r^{\pi}(t+s, X^\pi_s)+ \delta_t(t+s)\lambda \Hc(\pi(X^\pi_s)) \right) ds  \right]  \notag\\
		\leq & \int_{0}^\infty \left[ \sup_{(x, a)\in \R^d\times U}|\partial_t  r(s, x, a)|+ \delta_t(s)A_0 \bigg( 1+\ln(1+\|D_x w\|_{\Cc^0(\R^d)} \big) \bigg)\right] ds\notag\\
		\leq &A_0 \big(1+\ln(1+\|D_x w\|_{\Cc^0(\R^d)})\big),  \label{eq:prop.Vt}
	\end{align}
	where the inequalities above follow from \eqref{eq:assume.integralr} and \eqref{eq:assume.delta}. Because $\pi$ is independent of $t$, we have, for $l=0,1$,  that
	\begin{align}
		&\sup_{t_0<t_1\leq t_0+1}\frac{\left|\partial^{l}_t V^{\pi}_\lambda(t_0,x) -\partial^{l}_t V^{\pi}_\lambda(t_1,x)\right|}{|t_0-t_1|^{\alpha/2}} \notag\\
		&\leq \sup_{t_0<t_1\leq t_0+1}\E_x\int_0^\infty \Bigg(\frac{\big|  \partial^l_t  r(t_0+s, X^\pi_s)-\partial^l_t  r(t_1+s, X^\pi_s)\big|}{|t_0-t_1|^{\alpha/2}} \notag\\
		&\qquad \qquad \qquad\qquad \qquad +\frac{\left|\partial^l_t \delta(t_0+s)-\partial^l_t \delta(t_1+s)  \right|}{|t_0-t_1|^{\alpha/2}} \lambda \Hc(\pi(X^\pi_s)) \Bigg) ds \label{eq:prop.5}\\
		&\leq \int_0^\infty \Bigg[\Bigg(\sup_{(y, a)\in \R^d\times U,t_0<t_1\leq t_0+1 }\frac{\left|\partial^l_t  r(t_0+s, y, a)-\partial^l_t  r(t_1+s, y, a)\right|}{|t_0-t_1|^{\alpha/2}} \notag\\
		&\qquad \qquad\quad +\sup_{t_0<t_1\leq t_0+1}\frac{\left|\partial_t^l \delta(t_0+s)-\partial_t^l \delta(t_1+s)\right|}{|t_0-t_1|^{\alpha/2}} \Bigg)A_0 \bigg(1+\ln\big(1+\|D_xw\|_{\Cc^0(\R^d)} \big) \bigg) \Bigg] ds \notag\\
		&\leq A_0 \left(1+\ln\left(1+\|D_x w\|_{\Cc^0(\R^d)} \right)\right),\notag
	\end{align}
	where the last inequality follows from \eqref{eq:assume.rholder} and \eqref{eq:assume.deltaholder}.
	Then, combining \eqref{eq:prop.V}--\eqref{eq:prop.Vt} leads to 
	\be\label{eq:prop.tnorm}  
	\sup_{x\in \R^d}\|V^{\pi}_\lambda(\cdot, x)\|_{\Cc^1_{\alpha/2}(\T)}\leq A_0 \left(1+\ln\left(1+\|D_x w\|_{\Cc^0(\R^d)} \right)\right).
	\ee 
	By Steps 1 and 2, 
	\be\label{eq:lm.est7'}   
	\|V^{\pi}_\lambda\|_{W^{1,2,\ul}_{p}([t,\infty)\times \R^d)}\vee\| V^{\pi}_\lambda \|_{\Cc^{0,1}_{\alpha/2,\alpha}([t,\infty)\times \R^d))} \leq A_0 \Psi(t) \left( 1+\ln \left( 1+\|w\|_{\Cc^1(\R^d)} \right)\right).
	\ee
	Observe that the function $y\mapsto A_0\left( 1+\ln \left( 1+y\right)\right)$, mapping from $[0,\infty)$ to $[0,\infty)$, has sub-linear growth. Consequently, there exists a constant $A^*$ such that 
	$$
	0\leq A_0\left( 1+\ln \left( 1+y\right)\right)\leq A^*, \text{ whenever } 0\leq y\leq A^*.
	$$ 
	We can thus conclude from \eqref{eq:prop.tnorm}  and \eqref{eq:lm.est7'}  that 
	\bee
	\begin{cases}
		\|V^{\pi}_\lambda\|_{W^{1,2,\ul}_{p}([t,\infty)\times \R^d)}\vee\|V^{\pi}_\lambda \|_{\Cc^{0,1}_{\alpha/2,\alpha}([t,\infty)\times \R^d))} \leq \Psi(t) A^*, \;\; \forall t\in\T \\ 
		\sup_{x\in \R^d}\|V^{\pi}_\lambda(\cdot, x)\|_{\Cc^1_{\alpha/2}(\T)}\leq  A^*
	\end{cases}
	\text{provided that $\|w\|_{\Cc^{1}_\alpha(\R^d)}\leq A^*$. }
	\eee
	Moreover, utilizing the condition on $\sigma$ in Assumption \ref{assume.b.sigma} and the interior Schauder estimate (see, e.g., \cite[Theorem 8.11.1 and Remark 8.11.2]{krylov1996lectures}), we deduce that, for any $(t_0, x_0)\in \T\times \R^d$,
	$$
	\begin{aligned}
		\|V^{\pi}_\lambda\|_{\Cc^{1,2}_{\alpha/2, \alpha}(D_1(t_0, x_0))}\leq & \tilde C \left( \|r^\pi\|_{\Cc_{\alpha/2,\alpha}(D_2(t_0, x_0))}+ \|\lambda \delta\Hc(\pi)\|_{\Cc_{\alpha/2,\alpha}(D_2(t_0, x_0))} \right),
	\end{aligned}
	$$
	where $\tilde C$ is a constant only depending on $\|b^\pi\|_{\Cc_{\alpha}(\R^d)}$ and the H\"older continuity of $\sigma$ such that $\tilde C$ can be bounded by $\varphi(\|b^\pi\|_{\Cc_{\alpha}(\R^d)})$ with a fixed increasing function $\varphi(\cdot):[0,\infty)\to [0,\infty)$. Therefore, by Lemma \ref{lm:pi.est}, \eqref{eq:assume.delta} and \eqref{eq:assume.deltaholder}, if  $\|w\|_{\Cc^{1}_\alpha(\R^d)}\leq A^*$, then it holds that
	$$
	\begin{aligned}
		&\|V^{\pi}_\lambda\|_{\Cc^{1,2}_{\alpha/2, \alpha}(D_1(t_0, x_0))}
		\leq  \varphi\left(\frac{A_0A^*}{\lambda}\exp(\frac{A_0A^*}{\lambda})\right)\frac{A_0A^*}{\lambda}\exp(\frac{A_0A^*}{\lambda}) =:A_\lambda \quad \forall (t_0, x_0)\in \T\times \R^d,
	\end{aligned}
	$$
	which leads to 
	$
	\|V^{\pi}_\lambda\|_{\Cc^{1,2}_{\alpha/2,\alpha}(\T\times \R^d)} \leq A_\lambda.
	$
\end{proof}

Now we are ready to show Part (ii) of Theorem \ref{thm:equil.existenceentropy}.
\begin{proof}[{\bf Proof for Theorem \ref{thm:equil.existenceentropy} Part (ii): }] 
	Let $(u,\pi^*)$ be a pair such that $u(t,x)\in \Cc^{1,2 }_{\alpha/2, \alpha}(\T\times \R^d)$ is a classical solution to the EEHJB system \eqref{eq:cha.HJBentropy}--\eqref{eq:cha.HJBentropy'} and \eqref{eq:cha.HJBpientropy} holds. 
	Lemma \ref{prop:iteration.estimateentropy} implies that, with $\pi^*(x,a)=\Gamma_\lambda(x,D_x u(0,x),a)$, the function $V^{\pi^*}_\lambda(t,x) \in \Cc^{1,2}_{\alpha/2,\alpha}(\T\times \R^d)$  is the unique solution to \eqref{eq:cha.HJBentropy'}. Thus, $V^{\pi^*}_\lambda=u$. It remains to show that \eqref{eq:def.equientropy} holds for $\pi^*$.
	%Lemma \ref{prop:iteration.estimateentropy} implies that $V^{\tilde\pi}_\lambda(t,x) \in \Cc^{1,2}_{\alpha/2,\alpha}(\T\times \R^d)$ with $\tilde\pi(x,a)=\Gamma_\lambda(x,D_x V^{\pi^*}(x),a)$. By \eqref{eq:cha.HJBpientropy}, $\tilde\pi=\pi^*$. Thus, $V^{\pi^*}_\lambda=u$. 
	For any $x\in \R^d$ and $\varpi\in \Pc_c(U)$, %It\^o formula leads to \eqref{eq:prop.0}. Then, 
	since $V^{\pi^*}_\lambda(t,x) \in \Cc^{1,2}_{\alpha/2,\alpha}(\T\times \R^d)$, by applying It\^o formula as in the proof for Part (i) of Theorem  \ref{thm:equil.existenceentropy}, we obtain equation  \eqref{eq:prop.0}, and taking the limit as $\lambda \to 0$ then yeilds \eqref{eq:prop.0'}, which is bounded by the right-hand side of \eqref{eq:cha.HJBentropy}. Therefore,  $\pi^*$ satisfies \eqref{eq:def.equientropy}  and hence is a regularized equilibrium.
	%From the regularity of $V^{\pi^*}$, the right-hand side of \eqref{eq:prop.0} equals the term of ``$\eqref{eq:prop.0'}\times \eps+o(\eps)$", which, by \eqref{eq:cha.HJBentropy},  is less or equal to $o(\eps)$. By the arbitrariness of $x$ and $\varpi$, we conclude that \eqref{eq:def.equientropy} holds for $\pi^*$, and thus, $\pi^*$ is a regularized equilibrium.
\end{proof}

Next, we shall use the fixed point argument to show the existence of a solution to the EEHJB system \eqref{eq:cha.HJBentropy}--\eqref{eq:cha.HJBentropy'}. To begin with, for any $w\in \Cc^{0,1}(\T\times \R^d)$, let us define $\Phi_\lambda(w)$ by
\be\label{eq:Phi.fixedpoint} 
\Phi_\lambda(w):=V^{\Gamma_\lambda(D_x w(0,x))}.
\ee 
Fix an arbitrary $0<\lambda\leq \lambda_0$ and define 
\begin{align}\label{compact-set}
	\Mc_\lambda:=\left\{ w\in \Cc^{1,2 }_{\alpha/2, \alpha}(\T\times \R^d) : \|w\|_{\Cc^{0,1 }_{\alpha/2, \alpha}(\T\times \R^d)}\leq A^*, \|w\|_{\Cc^{1,2}_{\alpha/2, \alpha}(\T\times \R^d)}\leq A_\lambda \right\}.
\end{align}
Note that $\Mc_\lambda$ is a convex subset of $\Cc^{1,2}_{\alpha/2, \alpha}(\T\times \R^d)$.

\begin{Lemma}\label{lm:Phi.prepare}
	Take a sequence $(w^n)_{n\in \N\cup\{\infty\}}\subset \Mc_\lambda$ with $\lim_{n\to\infty}\|w^n-w^\infty\|_{\Cc^{1,2,\wg}_{\alpha/2,\alpha}(\T\times \R^d)} = 0$. Set $\pi^n(x,a)=\Gamma_\lambda(x,D_xw^n(0,x),a)$ for all $n\in \N\cup\{\infty\}$. Then for an each $N\in \N$, it holds that
	$$
	\lim_{n\to\infty}\left(\|r^{\pi^n}-r^{\pi^\infty}\|_{\Cc_{\alpha/2,\alpha}(D_{N})}+ \|b^{\pi^n}-b^{\pi^\infty}\|_{\Cc_{\alpha}(B_{N}(0))}+\|\lambda\delta (\Hc(\pi^n)-\Hc(\pi^\infty))\|_{\Cc_{\alpha/2, \alpha}(D_{N})}\right)=0.
	$$
\end{Lemma}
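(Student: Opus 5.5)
The plan is to reduce everything to local H\"older convergence of the exponent
$$g^n(x,a):=\tfrac{1}{\lambda}\big[b(x,a)D_xw^n(0,x)+r(0,x,a)\big],$$
and then to repeat the algebraic manipulations from the proof of Lemma \ref{lm:pi.est}, this time applied to differences $g^n-g^\infty$ rather than to increments $g(x)-g(y)$.

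\textbf{Step 1 (the exponents converge).} Since $w^n\to w^\infty$ in $\Cc^{1,2,\wg}_{\alpha/2,\alpha}$, for each $N$ we have $\|w^n-w^\infty\|_{\Cc^{1,2}_{\alpha/2,\alpha}(D_{N})}\to 0$, because this norm carries the fixed weight $2^{-N}$. In particular $D_xw^n(0,\cdot)\to D_xw^\infty(0,\cdot)$ in $\Cc_\alpha(B_N(0))$, after passing to the closure at $t=0$, which is harmless since the H\"older bounds give uniform continuity up to $t=0$. All $w^n$ lie in $\Mc_\lambda$, so $\|D_xw^n(0,\cdot)\|_{\Cc_\alpha(\R^d)}\le A_\lambda$ uniformly in $n$. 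By Assumption \ref{assume.b.sigma}, $\sup_a\|b(\cdot,a)\|_{\Cc_\alpha}\le K_3$, and the product rule for H\"older norms then gives
$$\sup_{a\in U}\|g^n(\cdot,a)-g^\infty(\cdot,a)\|_{\Cc_\alpha(B_N(0))}\le \tfrac{K_3}{\lambda}\,\|D_xw^n(0,\cdot)-D_xw^\infty(0,\cdot)\|_{\Cc_\alpha(B_N(0))}\to 0.$$
In addition, $\sup_{n,a}\|g^n(\cdot,a)\|_{\Cc_\alpha(\R^d)}\le C_\lambda$, and the analogue of \eqref{eq:lm.piest.1} gives $e^{-C_\lambda}\le\pi^n\le e^{C_\lambda}$ uniformly in $n$.

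\textbf{Step 2 (policies, $\ln$-policies and entropy converge).} Write
$$\ln\pi^n-\ln\pi^\infty=(g^n-g^\infty)-\Big(\ln\!\int_U e^{g^n}da'-\ln\!\int_U e^{g^\infty}da'\Big).$$
The map $\phi\mapsto\ln\int_U e^{\phi}$ is smooth, with derivative bounded on the uniform $\Cc_\alpha$-ball from Step 1. By the mean value theorem along the segment $\theta g^n+(1-\theta)g^\infty$, together with the usual estimate $[FG]_\alpha\le\|F\|_0[G]_\alpha+[F]_\alpha\|G\|_0$, the second term tends to $0$ in $\Cc_\alpha(B_N(0))$. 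Hence
$$\sup_a\|\ln\pi^n(\cdot,a)-\ln\pi^\infty(\cdot,a)\|_{\Cc_\alpha(B_N)}\to0.$$
Since $e^{x}$ is Lipschitz and smooth on bounded sets, the same bound follows for $\pi^n-\pi^\infty$.

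For the entropy, use the decomposition
$$\pi^n\ln\pi^n-\pi^\infty\ln\pi^\infty=(\pi^n-\pi^\infty)\ln\pi^n+\pi^\infty(\ln\pi^n-\ln\pi^\infty).$$
Each factor is uniformly bounded in $\Cc_\alpha$, and in each product one factor tends to $0$ in $\Cc_\alpha(B_N)$. Integrating over $U$, which has finite measure, therefore gives $\|\Hc(\pi^n)-\Hc(\pi^\infty)\|_{\Cc_\alpha(B_N)}\to0$.

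\textbf{Step 3 (assemble the three terms).} For the drift,
$$b^{\pi^n}-b^{\pi^\infty}=\int_U b(\cdot,a)\big(\pi^n-\pi^\infty\big)(\cdot,a)\,da,$$
so the product estimate gives convergence in $\Cc_\alpha(B_N)$. For the reward,
$$r^{\pi^n}(t,x)-r^{\pi^\infty}(t,x)=\int_U r(t,x,a)\big(\pi^n-\pi^\infty\big)(x,a)\,da,$$
where $\sup_a\|r(\cdot,a)\|_{\Cc_{\alpha/2,\alpha}}\le K_0$ by Assumption \ref{assume.r}. Viewing $\pi^n-\pi^\infty$ as a time-independent function on $D_N$, its parabolic H\"older seminorm equals its spatial one, so the $r$-term converges in $\Cc_{\alpha/2,\alpha}(D_N)$. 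For the entropy term, $\delta$ is bounded and $\alpha/2$-H\"older: $\delta_t$ is integrable and $\delta$ is bounded by \eqref{eq:assume.delta}, and a Lipschitz function is locally $\alpha/2$-H\"older on unit-scale increments. Hence $\lambda\delta\,(\Hc(\pi^n)-\Hc(\pi^\infty))$ converges in $\Cc_{\alpha/2,\alpha}(D_N)$ by the product rule.

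\textbf{Main obstacle.} The delicate part is Step 2: converting convergence of $g^n$ into H\"older (not merely sup-norm) convergence of the normalizing log-partition function and of $\pi\ln\pi$. This hinges on the uniform two-sided bounds $e^{\pm C_\lambda}$ for fixed $\lambda$, which keep $\ln$ and $\exp$ in a region where they are Lipschitz with Lipschitz derivatives. The approach would be to make the composition-with-smooth-function H\"older estimate explicit once,
$$\|F(\phi_1)-F(\phi_2)\|_{\Cc_\alpha}\le C\,\|\phi_1-\phi_2\|_{\Cc_\alpha}$$
for $F\in C^2$ on a bounded range and $\phi_i$ bounded in $\Cc_\alpha$, and then apply it repeatedly.
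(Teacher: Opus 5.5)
Your proposal is correct and follows essentially the same route as the paper: local $\Cc_\alpha$ convergence of the exponents $g^n$ is pushed through the log-partition function, the Gibbs densities and the entropy using the H\"older product rule and the uniform two-sided bounds on $\pi^n$. Your mean-value and $\pi\ln\pi$ decompositions replace the paper's identities $\ln\int_U e^{g^n-g^\infty}\pi^\infty\,da$ and $\Hc(\pi)=\ln\int_U e^{g}\,da-\int_U g\,\pi\,da$, which is only cosmetic. One slip: integrability of $\delta_t$ does not make $\delta$ Lipschitz (e.g.\ $\delta(t)\sim\sqrt t$ near $0$ satisfies \eqref{eq:assume.delta}--\eqref{eq:assume.deltaholder}), so the $\alpha/2$-H\"older continuity of $\delta$ on $(0,N)$ must instead be derived from \eqref{eq:assume.deltaholder} together with \eqref{eq:assume.delta}, a point the paper also leaves implicit.
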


\begin{proof}
	
	Through the proof,  let $A_0$ be a generic finite constant depending on $A^*, A_\lambda$, $N_0,\lambda$, $\Leb(U)$ but independent of $n$, which may change from line to line. Write $g^n(x,a):=\frac{1}{\lambda}[b(x,a)D_x w^n(0,x,a)+r(0,x,a)]$ for $n\in \N\cup\{\infty\}$. Note that 
	\be\label{eq:lm.C12est0'}     
	\sup_{a\in U}\|g^n(\cdot, a)\|_{\Cc_\alpha(B_{N_0+1} (0))}\leq A_0, \quad\sup_{a\in U}\|\exp(g^n(\cdot,a))\|_{\Cc_\alpha(B_{N_0+1} (0))}  \leq A_0,\quad \forall n\in \N\cup\{\infty\}.
	\ee
	Given some genetic functions $f(x), g(x), h(x,a)$ with $D$ being a domain in $\R^d$, the following results hold 
	\be\label{eq:lm.C12est0}    
	[fg]_{\Cc_\alpha(D)}\leq \|f\|_{L^\infty(D)}[g]_{\Cc_\alpha(D)}+ \|g\|_{L^\infty(D)}[f]_{\Cc_\alpha(D)}, \;  \left[ \int_U h(x,a)da \right]_{\Cc_\alpha(D)}\leq \sup_{a\in U} [h(\cdot,a)]_{\Cc_\alpha(D)}.
	\ee 
	First, \eqref{eq:lm.C12est0'} and \eqref{eq:lm.C12est0} together imply that
	\be\label{eq:lm.C12est0''}    
	\sup_{a\in U}\| (g^n-g^\infty)(\cdot, a)\|_{\Cc_{\alpha}(B_{N_0+1}(0))}\leq A_0 \| D_x(w^n-w^\infty)(0,x)\|_{\Cc_{\alpha}(B_{N_0+1}(0))}.
	\ee
	Direct computations yield that
	$$
	\begin{aligned}
		&\ln\left( \int_U \exp[g^n(x,a)] da \right) -\ln\left( \int_U \exp[g^\infty(x,a)]da \right) \\
		&= \ln\left(  \int_U  \frac{\exp[g^n(x,a)-g^\infty(x,a)]\exp[g^\infty(x,a)]}{\int_U \exp[g^\infty(x,a')]da'} da\right) \\
		&= \ln\left(  \int_U \exp[g^n(x,a)-g^\infty(x,a)] \pi^\infty(x,a)da \right).
	\end{aligned}
	$$
	Applying \eqref{eq:lm.C12est0} to the integral inside the logarithm and combining it with \eqref{eq:lm.C12est0'} and \eqref{eq:lm.C12est0''}, we have 
	\be\label{eq:lm.C12est7'} 
	\left\| \ln\left( \frac{\int_U \exp[g^n(x,a)] da} {\int_U \exp[g^\infty(x,a)]da} \right)  \right\|_{\Cc_\alpha(B_{N_0+1})(0)}\leq A_0 \| D_x(w^n-w^\infty)(0,x)\|_{\Cc_{\alpha}(B_{N_0+1}(0))}.
	\ee 
	In view that
	$$
	\exp[g^n(x,a)]- \exp [g^\infty(x,a)] =\exp[g^\infty(x,a)] \left(   \exp[g^n(x,a)-g^\infty(x,a)]-1] \right), 
	$$
	applying \eqref{eq:lm.C12est0} to the above equality, together with \eqref{eq:lm.C12est0'}, yields
	\bee
	\begin{aligned} 
		\left[e^{g^n(x,a)}- e^{g^\infty(x,a)} \right]_{\Cc_{\alpha}(B_{N_0+1}(0))}&\leq A_0  \sup_{a\in U} \bigg([e^{g^\infty(\cdot,a)}]_{\Cc_\alpha (B_{N_0+1}(0))} \|(g^n-g^\infty) (\cdot, a)\|_{L^\infty(B_{N_0+1}(0))} \\
		&\qquad\qquad \quad  +  \|(g^\infty) (\cdot, a)\|_{L^\infty(B_{N_0+1}(0))} [(g^n-g^\infty)(\cdot,a)]_{\Cc_\alpha(B_{N_0+1}(0))} \bigg),
	\end{aligned}
	\eee 
	which further implies that 
	\be\label{eq:lm.C12est1}   
	\|\exp[g^n(x,a)]- \exp [g^\infty(x,a)] \|_{\Cc_\alpha(B_{N_0+1}(0))}\leq A_0 \|D_x(w^n-w^\infty)(0,\cdot)\|_{\Cc_{\alpha}(B_{N_0+1}(0))}.
	\ee 
	Note that 
	$$
	\begin{aligned}
		&\pi^n(x,a)-\pi^\infty(x,a)\\
		&= \frac{  \exp[g^n(x,a)]\int_U \exp[g^\infty(x,a')]da' -   \exp[g^\infty(x,a)]\int_U \exp[g^n(x,a')]da' }{   \int_U \exp[g^n(x,a')]da'\int_U \exp[g^\infty(x,a')]da' } \\
		&=\int_U \left(e^{g^\infty(x,a')}-e^{g^n(x, a')}\right)da' \frac{ \pi^n(x,a) }{\int_U \exp[g^\infty(x,a')]da'}  +   \left(e^{g^n(x,a)}-e^{g^\infty(x, a)}\right) \frac{ 1 }{\int_U e^{g^\infty(x,a')}da'}.
	\end{aligned}
	$$
	Applying \eqref{eq:lm.C12est0} to the two terms respectively in the last line above, together with  \eqref{eq:lm.C12est0'} and \eqref{eq:lm.C12est1}, we get that
	\be\label{eq:lm.C12est3}  
	\sup_{a\in U} \|\pi^n(\cdot,a)-\pi^\infty(\cdot,a)\|_{\Cc_{\alpha}(B_{N_0+1}(0))}\leq A_0 \| D_x(w^n-w^\infty)(0,x)\|_{\Cc_{\alpha}(B_{N_0+1}(0))}.
	\ee 
	Then combining \eqref{eq:lm.C12est3}  with Assumptions \ref{assume.b.sigma}, \eqref{assume.r} and \eqref{eq:lm.C12est0} yields
	\be\label{eq:lm.C12est5}   
	\begin{aligned}
		& \|r^{\pi^n}-r^{\pi^\infty}\|_{\Cc_{\alpha/2,\alpha}(D_{N_0+1})}+ \|b^{\pi^n}-b^{\pi^\infty}\|_{\Cc_{\alpha}(B_{N_0+1}(0))}\\
		& \leq  A_0 \| D_x(w^n-w^\infty)(0,x)\|_{\Cc_{\alpha}(B_{N_0+1}(0))}\to 0.
	\end{aligned}
	\ee 
	Meanwhile, it holds that 
	$$
	\begin{aligned}
		\Hc(\pi^n(x))-\Hc(\pi^\infty(x))= & \int_U g^\infty(x,a) [\pi^\infty(x,a)-\pi^n(x,a)]da+\int_U [g^\infty(x,a)-g^n(x,a)] \pi^n(x,a)da \\
		&+\ln\left( \frac{\int_U \exp[g^n(x,a)] da} {\int_U \exp[g^\infty(x,a)]da} \right)
	\end{aligned}
	$$
	Applying \eqref{eq:lm.C12est0'} and \eqref{eq:lm.C12est0} to the first two integrals on the right-hand side, together with \eqref{eq:lm.C12est7'}, gives that
	$$
	\|\Hc(\pi^n(x))-\Hc(\pi^\infty(x))\|_{\Cc_\alpha(B_{N_0+1}(0))}\leq A_0\|D_x (w^n-w^\infty)\|_{\Cc_\alpha(B_{N_0+1}(0))},
	$$
	which, together with \eqref{eq:assume.delta}, yields 
	\bee\label{eq:lm.C12est7}   
	\|\lambda\delta (\Hc(\pi^n)-\Hc(\pi^\infty))\|_{\Cc_{\alpha/2, \alpha}(D_{N_0+1})}\leq A_0\|D_x (w^n-w^\infty)\|_{\Cc_\alpha(B_{N_0+1}(0))}\to 0.
	\eee 
	%Plugging \eqref{eq:lm.C12est5}  and \eqref{eq:lm.C12est7}  into \eqref{eq:lmC12.fn}, we readily arrive at \eqref{eq:lmC12.fnvanish}.  
\end{proof}

\begin{Lemma}\label{lm:Phi.continuous}
	Fix $0<\lambda<\lambda_0$, we have that, for any $0<\beta<\alpha$,
	\bi  
	\item[(i)] $\Mc_\lambda$ is a compact subset of the space $\Cc^{1,2, \wg}_{\beta/2, \beta}(\T\times \R^d)$; 
	\item[(ii)] $\Phi_\lambda$ is a continuous mapping from $\Mc_\lambda \to \Mc_\lambda$ equipped with norm $\|\cdot\|_{\Cc^{1,2, \wg}_{\beta/2,\beta}(\T\times \R^d)}$. 
	\ei 
\end{Lemma}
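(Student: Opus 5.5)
For part (i), I would use an Arzelà--Ascoli argument on each $D_N$ together with a diagonal extraction. Take any sequence $(w^n)\subset\Mc_\lambda$. Since $\|w^n\|_{\Cc^{1,2}_{\alpha/2,\alpha}(\T\times\R^d)}\le A_\lambda$, the functions $w^n,\partial_t w^n,D_xw^n,D^2_xw^n$ are uniformly bounded and uniformly $(\alpha/2,\alpha)$-Hölder on $\overline{D_N}$ for every $N$. The embedding $\Cc^{1,2}_{\alpha/2,\alpha}(\overline{D_N})\hookrightarrow \Cc^{1,2}_{\beta/2,\beta}(\overline{D_N})$ is compact for $\beta<\alpha$. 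This follows from Arzelà--Ascoli in $\Cc^0$ plus the interpolation inequality
\[
[f]_{\beta}\le 2\|f\|_{\Cc^0}^{1-\beta/\alpha}[f]_{\alpha}^{\beta/\alpha},
\]
valid on increments of size $\le 1$. A diagonal argument over $N\in\N$ then gives a subsequence and a limit $w^\infty$ with $\|w^{n_k}-w^\infty\|_{\Cc^{1,2}_{\beta/2,\beta}(D_N)}\to0$ for every $N$. The bound $\|w^{n_k}\|_{\Cc^{1,2}_{\beta/2,\beta}(D_N)}\le A_\lambda$ is uniform in $N$, and the weights $2^{-N}$ are summable, so dominated convergence of the series gives convergence in $\|\cdot\|_{\Cc^{1,2,\wg}_{\beta/2,\beta}}$.

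It remains to check that $w^\infty\in\Mc_\lambda$, i.e.\ that $\Mc_\lambda$ is closed. The constraints are global suprema of pointwise quantities: sup norms and difference quotients over pairs with $|t-s|+|x-y|^2\le1$. Each difference quotient of $w^\infty$ is a pointwise limit of the corresponding quotients of $w^{n_k}$, because local $\Cc^{1,2}$ convergence gives pointwise convergence of all derivatives. Hence both bounds $A^*$ and $A_\lambda$ pass to the limit by lower semicontinuity.

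For part (ii), I would first show self-mapping. For $w\in\Mc_\lambda$, $\|D_xw(0,\cdot)\|$ is controlled: the $\Cc^{0,1}_{\alpha/2,\alpha}$ bound $A^*$ implies $\|w(0,\cdot)\|_{\Cc^1_\alpha(\R^d)}\le A^*$. Lemma \ref{prop:iteration.estimateentropy}, applied with input $w(0,\cdot)$, then gives $\Phi_\lambda(w)=V^{\pi}_\lambda\in\Cc^{1,2}_{\alpha/2,\alpha}$. Estimate \eqref{eq:prop.est.t} at $t=0$ (with $\Psi(0)=1$) gives the $A^*$ bound, and \eqref{eq:prop.est} gives the $A_\lambda$ bound. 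So $\Phi_\lambda(\Mc_\lambda)\subset\Mc_\lambda$.

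Continuity is the main step. Let $w^n\to w^\infty$ in $\Cc^{1,2,\wg}_{\beta/2,\beta}$ within $\Mc_\lambda$. Lemma \ref{lm:Phi.prepare} is stated with $\alpha$-norms, but its proof only uses local Hölder algebra, so it applies with $\beta$ in place of $\alpha$. It gives local convergence of $r^{\pi^n},b^{\pi^n},\lambda\delta\Hc(\pi^n)$ in $\Cc_{\beta/2,\beta}(D_N)$ for every $N$. Set $v^n:=\Phi_\lambda(w^n)$. These lie in the compact set $\Mc_\lambda$, so by part (i) every subsequence has a further subsequence converging in the weighted norm to some $v\in\Mc_\lambda$.

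I would identify every such limit as $v=\Phi_\lambda(w^\infty)$. Since $v^n$ solves \eqref{eq:prop.iteraPDE} with coefficients $b^{\pi^n}$ and sources $r^{\pi^n}+\lambda\delta\Hc(\pi^n)$, and all derivatives converge locally uniformly, passing to the limit pointwise shows that $v$ is a classical solution of \eqref{eq:prop.iteraPDE} with $\pi^\infty$. By part (i) $v\in\Cc^{1,2}_{\alpha/2,\alpha}$, so the uniqueness in Lemma \ref{prop:iteration.estimateentropy} forces $v=V^{\pi^\infty}_\lambda=\Phi_\lambda(w^\infty)$.

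The step I expect to need most care is this uniqueness claim, since it relies on the decay at $t\to\infty$ used in Step 2 of that lemma. The limit $v$ must inherit this: it inherits the bound $\|v\|_{\Cc^{0}([t,\infty)\times\R^d)}\le A^*\Psi(t)$ from \eqref{eq:prop.est.t}, applied uniformly to $v^n$ and passed pointwise. Alternatively, one can check directly that the Itô argument only needs $v$ bounded in $\Cc^{1,2}_{\alpha/2,\alpha}$ with $v(t,\cdot)\to0$ uniformly as $t\to\infty$. Since every subsequence has a further subsequence converging to the same limit, the whole sequence converges, and $\Phi_\lambda$ is continuous.
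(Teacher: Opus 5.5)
Your proposal is correct. Part (i) and the self-mapping claim follow the paper's argument: diagonal extraction over $D_N$, compact embedding, lower semicontinuity of the H\"older norms, the summable tail $\sum_{N>N_0}2^{-N}A_\lambda$, and Lemma \ref{prop:iteration.estimateentropy} at $t=0$ with $\Psi(0)=1$.

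For the continuity of $\Phi_\lambda$ you take a genuinely different route.

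\textbf{The paper's route.} It argues by direct stability. It writes $\bar V^n=V^\infty-V^n$ as a solution of the linear equation \eqref{eq:PDF.Vbar} with source $f^n$. It then shows $\|\bar V^n\|_{L^\infty(D_{N_0+1})}\to 0$ probabilistically, using truncation at exit times as in \eqref{eq:lmvbarN.eps} and stability of $X^{\pi^n}$ and of $\rho^n_{N_1}$. Finally it upgrades this to $\Cc^{1,2}$-convergence on $D_{N_0}$ via the interior Schauder estimate \eqref{eq:lm.barvest}, with Lemma \ref{lm:Phi.prepare} making $f^n$ small in H\"older norm.

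\textbf{Your route.} You combine the compactness from part (i) with identification of subsequential limits through uniqueness for \eqref{eq:prop.iteraPDE}.

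\textbf{What each buys.} Your argument needs only pointwise convergence of the Gibbs coefficients, so Lemma \ref{lm:Phi.prepare} is more than you actually use. It avoids exit-time stability and the Schauder estimate for the difference. It also works directly with $\beta$-convergence of the inputs, which is the topology in which Schauder's fixed-point theorem is applied. By contrast, the paper's written argument assumes $w^n\to w^\infty$ in the $\alpha$-weighted norm and then invokes $\beta<\alpha$; that only settles the target side, so its estimates must be rerun with exponent $\beta$. In exchange, the paper's approach is quantitative, giving a local modulus in terms of $\|D_x(w^n-w^\infty)(0,\cdot)\|$ on compacts, while yours is purely qualitative.

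You are also right that the uniqueness step needs decay as $t\to\infty$. Since \eqref{eq:prop.iteraPDE} has no zeroth-order term, $u+c$ solves it whenever $u$ does. Uniqueness therefore holds only among bounded classical solutions with $u(t,\cdot)\to 0$ uniformly. The limit $v$ inherits $|v(t,x)|\le A^*\Psi(t)$ from the $v^n$. This uniform decay can also be read off directly from the probabilistic representation, as in \eqref{eq:prop.V} with $\int_0^\infty$ replaced by $\int_t^\infty$. That makes your identification $v=\Phi_\lambda(w^\infty)$ independent of the Sobolev-based estimate \eqref{eq:prop.est.t}.
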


\begin{proof}
	{\bf Part (i).} Fix an arbitrary $0<\beta<\alpha$. Suppose that $(w^n)_{n\in \N}$ is a sequence in $\Mc_\lambda$, and we will carry out a diagonal argument. For each $N\in \N$, 
	\be\label{eq:lm.phicontinuous0}  
	\sup_{n\in \N}\|w^n\|_{\Cc^{0,1}_{\alpha/2, \alpha}(D_N)}\leq A^*, \sup_{n\in \N}\|w^n\|_{\Cc^{1,2}_{\alpha/2, \alpha}(D_N)}\leq A_\lambda.
	\ee 
	Take $N=1$, in view that $C^{l,k}_{\alpha/2,\alpha}(D_1)$ is compactly embedded in $C^{l,k}_{\beta/2,\beta}(D_1)$, we can find a subsequence of $(w^n)_{n\in \N}$, denoted by $(w^{1, n})_{n\in \N}$, and a function $u^1\in \Cc^{1,2}(D_1)$ such that $\|w^{1, n}-u^1\|_{\Cc^{1,2}_{\beta/2, \beta}(D_1)}\to 0$. In particular, all derivatives $\partial^l_t D^a_x w^{1,n}$ (here $a$ is a multi-index) with $l+|a|_{l^1}\leq 2$
	converge uniformly to the corresponding derivatives of $u^1$. Then, for any $0\leq 2l+|a|_{l^1}\leq 2$, it holds that
	$$
	\frac{|\partial^l D^a_x u^1(t,x)-\partial^l D^a_x u^1(s,y)|}{(|t-s|+|x-y|^2)^{\alpha/2}}\leq \limsup_{n\to\infty} \frac{|\partial^l D^a_x w^{1,n}(t,x)-\partial^l D^a_x w^{1,n}(s,y)|}{(|t-s|+|x-y|^2)^{\alpha/2}}.
	$$
	Therefore, $u^1\in \Cc^{1,2}_{\alpha/2,\alpha}(D_1)$ and 
	\be\label{eq:lm.phicontinuous1} 
	\|u^1\|_{\Cc^{0,1}_{\alpha/2, \alpha}(D_1)}\leq A^*,\quad \|u^1\|_{\Cc^{1,2}_{\alpha/2, \alpha}(D_1)}\leq A_\lambda.
	\ee 
	Repeating this argument for $N=i$, we can pick a subsequence of $(w^{i-1,n})_{n\in\N}$, denoted by $(w^{i, n})_{n\in \N}$, and a function $u^i\in \Cc^{1,2}(D_i)$  such that $\|w^{i, n}-u^i\|_{\Cc^{1,2}_{\beta/2, \beta}(D_i)}\to 0.$  Moreover, $\|u^i\|_{\Cc^{0,1}_{\alpha/2, \alpha}(D_i)}\leq A^*$ and $\|u^i\|_{\Cc^{1,2}_{\alpha/2, \alpha}(D_i)}\leq A_\lambda.$
	Then, we define the function $w$ on $\T\times \R^d$ by 
	$$
	w(t,x):= u^i(t,x) \text{   on }D_i,\quad \text{for }i=1,2,\cdots.
	$$ 
	As
	$
	u^{i+1}=u^i \text{ on } D_i,
	$
	$w$ is well-defined and satisfies that 
	$$
	\|w\|_{\Cc^{0,1}_{\alpha/2, \alpha}(D_N)}\leq  A^*, \quad \|w\|_{\Cc^{1,2}_{\alpha/2, \alpha}(D_N)}\leq A_\lambda \quad \forall N\in \N,
	$$
	which yields that $\|w\|_{\Cc^{0,1}_{\alpha/2, \alpha}(\T\times \R^d)}\leq A^*$ and $\|w\|_{\Cc^{1,2}_{\alpha/2, \alpha}(\T\times \R^d)}\leq A_\lambda$. It thus holds that $w\in \Mc_\lambda$. Moreover, the diagonal subsequence $(w^{n,n})_{n\in \N}$ satisfies that 
	$$
	\lim_{n\to \infty}\|w^{n,n}-w\|_{\Cc^{1,2}_{\beta/2, \beta}(D_N)}=0,\quad \forall N\in \N.
	$$
	Then, for any $\eps>0$, there exist $N_0$ and $n_0$ such that
	$$\sum_{N=N_0+1}^\infty \frac{ A_\lambda}{2^N}\leq \frac{\eps}{4}, \;\text{ and }  \sum_{N=1}^{N_0}\frac{1}{2^N}\|w^{n,n}-w\|_{\Cc^{1,2}_{\beta/2, \beta}(D_{N_0})}\leq \frac{\eps}{2}\; \text{ $\forall n\geq n_0$},$$
	which imply that, for all $n\geq n_0$,
	\begin{align*}
		&\|w^{n,n}-w\|_{\Cc^{1,2, \wg}_{\beta/2, \beta}(\T\times \R^d)}\\
		&\leq \sum_{N=1}^{N_0}\frac{1}{2^N} \|w^{n,n}-w\|_{\Cc^{1,2}_{\beta/2, \beta}(D_{N_0})} +\sum_{N=N_0+1}^\infty \left(\|w^{n,n} \|_{\Cc^{1,2}_{\beta/2, \beta}(D_{N})} +\|w\|_{\Cc^{1,2}_{\beta/2, \beta}(D_{N})} \right) \\
		&\leq  \sum_{N=1}^{N_0}\frac{1}{2^N} \|w^{n,n}-w\|_{\Cc^{1,2}_{\beta/2, \beta}(D_{N_0})} +\sum_{N=N_0+1}^\infty \left(\|w^{n,n} \|_{\Cc^{1,2}_{\alpha/2, \alpha}(D_{N})} +\|w\|_{\Cc^{1,2}_{\alpha/2, \alpha}(D_{N})} \right) \\
		&\leq  \frac{\eps}{2}+\frac{\eps}{4}+\frac{\eps}{4}=\eps.
	\end{align*}
	In conclusion, $w$ is the limit of $(w^{n,n})_{n\in \N}$ within $\Mc_\lambda$, and hence $\Mc_\lambda$ is a  compact subset of the space $\Cc^{1,2, \wg}_{\beta/2, \beta}(\T\times \R^d)$.
	
	{\bf Part (ii).} Take an arbitrary function $w\in \Mc_\lambda$, we have that  $\|w(0,\cdot)\|_{\Cc^1_\alpha(\R^d)}\leq \|w\|_{\Cc^{0,1}_{\alpha/2,\alpha}(\T\times \R^d)}\leq A^*$, then Lemma \ref{prop:iteration.estimateentropy} shows that $\Phi_\lambda(w)\in \Mc_\lambda$. Consequently, $\Phi_\lambda$ maps from $\Mc_\lambda$ to $\Mc_\lambda$. We next show that $\Phi_\lambda$ is a continuous mapping from $\Mc_\lambda$ to $\Mc_\lambda$, equipped with norm $\|\cdot\|_{\Cc^{1,2, \wg}_{\alpha/2,\alpha}(\T\times \R^d)}$. Then the desired result holds due to $\beta<\alpha$. 
	
	Take a sequence $(w^n)_{n\in \N\cup\{\infty\}}\subset \Mc_\lambda$ with $\lim_{n\to\infty}\|w^n-w^\infty\|_{\Cc^{1,2,\wg}_{\alpha/2,\alpha}(\T\times \R^d)}\to 0$. Set $V^n:=\Phi_\lambda(w^n), \pi^n(x,a)=\Gamma_\lambda(x,D_xw^n(0,x),a)$ for each $n\in \N\cup\{\infty\}$. By Lemma \ref{lm:entropy.ln}, it holds that
	$$
	\partial_t V^n(t,x)+  \frac{1}{2}\tr\left((\sigma\sigma^T)(x) D^2_x V^n(t,x)\right)+ b^{\pi^n}(x)D_x V^n(t,x) +r^{\pi^n}(t,x)+\lambda \delta(t)\Hc(\pi^n(x))=0.
	$$
	Define $\bar V^n=V^\infty-V^n$ for $n\in \N$, then
	\be\label{eq:PDF.Vbar}  
	\begin{aligned}
		0=\partial_t \bar V^n(t,x)+b^{\pi^\infty}(x)D_x \bar V^n(t,x)+  \frac{1}{2}\tr\left((\sigma\sigma^T)(x) D^2_x \bar V^n(t,x)\right)+ f^n(t,x),
	\end{aligned}
	\ee 
	with
	\be\label{eq:lmC12.fn}
	f^n(t,x):=  r^{\pi^n}(t,x)-r^{\pi^\infty}(t,x)+\lambda \delta(t)[\Hc(\pi^n(x))-\Hc(\pi^\infty(x))]+[b^{\pi^\infty}(x)-b^{\pi^n}(x)]D_x V^n(t, x).
	\ee
	We now claim that $\|\bar V^n\|_{\Cc^{1,2,\wg}_{\alpha/2,\alpha}(\T\times \R^d)} \to 0$. 
	Given any $\eps>0$, there exists some $N_0$ such that $\sum_{N=N_0+1}^\infty\frac{A_\lambda}{2^N} \leq \frac{\eps}{4}$, then $(V^n)_{n\in \N\cup\{\infty\}}\in \Mc_\lambda$ gives that 
	$$
	\sum_{N=N_0+1}^\infty \frac{1}{2^N}\|\bar V^n\|_{\Cc^{1,2}_{\alpha/2,\alpha}(D_N)}\leq \sum_{N=N_0+1}^\infty \frac{1}{2^N} \left( \|V^n\|_{\Cc^{1,2}_{\alpha/2,\alpha}(D_N)}+  \|V^\infty\|_{\Cc^{1,2}_{\alpha/2,\alpha}(D_N)} \right)\leq 2\sum_{N=N_0+1}^\infty\frac{A_\lambda}{2^N}\leq \frac{\eps}{2}.
	$$
	To prove the claim, it suffices to show 
	\be\label{eq:lm.continuous1} 
	\lim_{n\to \infty} \|\bar V^n\|_{\Cc^{1,2}_{\alpha/2,\alpha}(D_{N_0})} =0.
	\ee
	We begin by showing that 
	\be\label{eq:lmC12.vbarbound}  
	\lim_{n\to \infty}\|\bar V^n\|_{L^\infty (D_{N_0+1})}=0.
	\ee 
	Define $\rho^n_N:=\inf\{s\geq 0: X^{\pi^n}_s\notin B_{N}(0)\}$ for $n\in \N\cup\{\infty\}$ and
	note that $\sup_{n\in \N\cup\{\infty\}}\|b^{\pi^n}\|_{\Cc_\alpha(\R^d)}$ is bounded. This, together with the condition on $\sigma$ in Assumption \ref{assume.b.sigma} and the uniform bound on $\|V^n\|_{L^\infty(\T\times \R^d)}$, ensures that, for any $\eps>0$, there exists a sufficiently large  $N_1\in \N$ with $N_1\geq N_0+1$ such that 
	\be\label{eq:lmvbarN.eps} 
	\|V^n(t,x)-V^{n}_{N_1}(t,x)\|_{L^\infty(D_{N_0+1})}\leq \eps\quad \forall n\in \N\cup\{\infty\}.
	\ee  
	Here, the truncated value functions $V^n_{N_1}$ for $n\in \N\cup\{\infty\}$ are defined by 
	$$
	V^n_{N_1}(t,x):=E_x\left[\int_0^{\rho^{n}_{N_1}\wedge (N_1-t)}  \left( r^{\pi^n}(t+s, X^{\pi^n}_s)+\delta(t+s)\lambda \Hc(\pi^n(X^{\pi^n}_s))\right)ds \right], \text{ for } (t,x)\in D_{N_0+1}.
	$$
	It then holds that
	$$
	\begin{aligned}
		&|   V^n_{N_1} (t,x)-  V^\infty_{N_1} (t,x) | \\
		& =E_x\left[\int_0^{\rho^{n}_{N_1}\wedge (N_1-t)}  \left( [r^{\pi^n}-r^{\pi^\infty}](t+s, X^{\pi^n}_s)+\delta(t+s)\lambda [\Hc(\pi^n)-\Hc(\pi^\infty)](X^{\pi^n}_s) \right)ds \right]  \quad \text{(I)}\\
		&\quad + E_x\bigg[\int_0^{\rho^{n}_{N_1}\wedge (N_1-t)}  \left( r^{\pi^\infty}(t+s, X^{\pi^n}_s)+\delta(t+s)\lambda \Hc(\pi^\infty(X^{\pi^n}_s))\right)ds\\
		&\qquad \qquad  - \int_0^{\rho^{\infty}_{N_1}\wedge (N_1-t)}  \left( r^{\pi^\infty}(t+s, X^{\pi^\infty}_s)+\delta(t+s)\lambda \Hc(\pi^\infty(X^{\pi^\infty}_s))\right)ds  \bigg]   \qquad \qquad \qquad\text{(II)}
	\end{aligned}
	$$
	By Lemma \ref{lm:Phi.prepare}, $\|r^{\pi^n}-r^{\pi^\infty}\|_{\Cc_{\alpha/2, \alpha}(D_{N_1})}$, $\|\delta\Hc({\pi^n})-\delta\Hc({\pi^\infty})\|_{\Cc_{\alpha/2, \alpha}(D_{N_1})}$ and $\|b^{\pi^n}-b^{\pi^\infty}\|_{\Cc_{ \alpha}(B_{N_1}(0))}$ all tend to zero as $n\to\infty$. As a result, the term (I) tends to zero as $n\to\infty$ uniformly for $(t,x)\in D_{N_0+1}$. Also, by taking $X^{\pi^n}_0=x$ for all $n\in \N\cup\{\infty\}$, the standard stability theory of SDEs yields the following convergences hold uniformly over the starting position $x\in B_{N_0+1}(0)$:
	$$
	\sup_{0\leq s\leq N_1} |X^{\pi^n}-X^{\pi^\infty}|\to 0 \text{ in probability }, \rho^n_{N_1}\to \rho^\infty_{N_1} \text{ in probability}.
	$$
	Consequently, the term (II) also tends to zero uniformly for $(t,x)\in D_{N_0+1}$. In sum, we conclude that 
	$$
	\sup_{(t,x)\in D_{N_0+1}} |V^n_{N_1} (t,x)-  V^\infty_{N_1} (t,x)| \to 0 \text{ as $n\to\infty$}.
	$$
	This, together with \eqref{eq:lmvbarN.eps}, gives the desired \eqref{eq:lmC12.vbarbound}.
	
	By Lemma \ref{lm:Phi.prepare} and that $\sup_{n\in \N}\|V^n\|_{\Cc^{0,1}_{\alpha/2,\alpha/2}(\T\times \R^d)}\leq A^*$, $f^n$ defined in \eqref{eq:lmC12.fn} satisfies that
	\be\label{eq:lmC12.fnvanish}  
	\|f^n\|_{\Cc_{\alpha/2, \alpha}(D_{N_0+1})}\to 0 \text{ as $n\to \infty$}.
	\ee 
	Then, applying Schaulder estimate on \eqref{eq:PDF.Vbar} (see, e.g., \cite[Theorem 8.11.1 and Remark 8.11.2]{krylov1996lectures}) readily yields 
	\be\label{eq:lm.barvest} 
	\|\bar V^n\|_{\Cc^{1,2}_{\alpha/2,\alpha}(D_{N_0})}\leq C\left(  \|\bar V^n\|_{L^\infty(D_{N_0+1})} + \|f^n\|_{\Cc_{\alpha/2 ,\alpha}(D_{N_0+1})} \right),
	\ee
	where $C$ is a finite constant depending on $A_\lambda$ and $\|b^{\pi^\infty}\|_{\Cc_{\alpha}(B_{N_0+1}(0))}$.  \eqref{eq:lmC12.vbarbound} and \eqref{eq:lmC12.fnvanish} together imply that the right-hand side of \eqref{eq:lm.barvest} tends to zero as $n\to\infty$, which completes the proof.
\end{proof}

We are now ready to prove Theorem \ref{thm:equil.existenceentropy} Part (iii). 

\begin{proof}[{\bf Proof of Theorem \ref{thm:equil.existenceentropy} Part (iii):}]
	Fix an entropy weight $0<\lambda\leq \lambda_0$ and an arbitrary H\"older constant $0<\beta<\alpha$.  Lemmas \ref{prop:iteration.estimateentropy} and \ref{lm:Phi.continuous} show that the mapping $\Phi_\lambda$ defined in \eqref{eq:Phi.fixedpoint} is a continuous mapping from the compact convex set $\Mc_\lambda$, equipped with norm $\|\cdot\|_{\Cc^{1,2, \wg}_{\beta/2,\beta}(\T\times \R^d)}$, to itself. Hence, by Schauder fixed-point theorem (see, e.g., \cite[Theorem 5.28]{Rudin1991functional}), $\Phi_\lambda$ has a fixed point $w\in \Mc_\lambda$ such that $w=V^{\pi^*}_\lambda$ with $\pi^*(x,a)= \Gamma_{\lambda}(x, D_xw(0,x), a)$. As a result, Lemma \ref{prop:iteration.estimateentropy} yields that $V^{\pi^*}_\lambda$ satisfies \eqref{eq:cha.HJBentropy'}. By taking $t=0$ in \eqref{eq:prop.iteraPDE},  together with the definition of $\Gamma_\lambda$ in \eqref{eq:def.Gammalambda}, we see that $V^{\pi^*}_\lambda$ also satisfies \eqref{eq:cha.HJBentropy}. Then, we can conclude from Theorem \ref{thm:equil.existenceentropy} Part (ii) that $\pi^*$ is a regularized equilibrium with the entropy parameter $\lambda$. 
	Moreover, the estimates of $V^{\pi^*}_\lambda$ are direct consequences from $V^{\pi^*}_\lambda\in \Mc_\lambda$ and Lemma \ref{prop:iteration.estimateentropy}.
\end{proof}

\begin{Remark}
	We emphasis that, the existence of a classical solution to the EEHJB system \eqref{eq:cha.HJBentropy}--\eqref{eq:cha.HJBentropy'} (i.e., the existence of an equilibrium under entropy regularization) stated in Theorem \ref{thm:equil.existenceentropy} Part (iii) is not restricted to the case of $\lambda\in(0,\lambda_0]$, but holds valid for all $\lambda>0$ by simply allowing $A^*$ to depend on $\lambda$. First, the sub-linear growth estimate in Lemma \ref{lm:entropy.ln} holds for any $\lambda>0$. Indeed, as stated in \cite[Lemma 1]{bayraktar2025relaxed}, for an arbitrary $\lambda>0$,
	$$
	|\Hc(\Gamma_\lambda(x,p,a))|=\left| \int_U \ln(\Gamma_{\lambda}(x,p,a))\Gamma_{\lambda}(x,p,a)da\right|\leq \kappa+\ell \ln(1+|p|),\quad \forall y\in \R^d,
	$$
	where $\kappa>0$ depends on $\lambda$, $\ell$, $\Leb(U)$ and the parameters in Assumption \ref{assume.lipsa.U}. When $\lambda\leq \lambda_0$, $\kappa$ can be further refined as $\kappa= K_6+K_7 |\ln\lambda|$ with $K_6, K_7$ independent of $\lambda$. Thus, for any $\lambda>0$, the fixed point argument in this section remains valid.
	
	In the analysis above, we have deliberately chosen the estimates of the value function $V^{\pi}$ to be independent of $\lambda$ because these estimates serve as a key tool in the convergence analysis of the next section as $\lambda\rightarrow 0$.
	
\end{Remark}

\section{Existence of Equilibrium by Vanishing Entropy Regularization}\label{sec:convergence}

In this section, we return to the original time-inconsistent control problem without entropy regularization. To address the existence of equilibrium, we aim to establish a core convergence result: as the entropy parameter  $\lambda\rightarrow 0$, solutions of the  EEHJB system converge to a solution to the original EHJB system in a proper sense such that the limit of the regularized equilibrium is a relaxed equilibrium in the original problem. 

Throughout this section, we denote $q=\frac{d+2}{d+1+\alpha}=\frac{p}{1-p}$ (such that $1/p+1/q=1$), and impose Assumptions \ref{assume.r}, \ref{assume.b.sigma}, \ref{assume.lipsa.U} and the conditions on $\delta$ in \eqref{eq:assume.delta}--\eqref{eq:assume.deltaholder}.

The next main result gives a sufficient condition, new to the literature, for the existence of an equilibrium based on the existence of a strong (rather than classical) solution to the EHJB system.
\begin{Theorem}[A sufficient condition for equilibrium]\label{thm:equi.existence1} For a measurable strategy $\pi^*: \R^d\to \Pc(U)$, if $V^{\pi^*}\in \Cc^{0,1 }_{\alpha/2, \alpha}(\T\times \R^d)\cap  W^{1,2, \ul}_{p}(\T\times \R^d)$ with $\sup_{x\in\R^d}\|V^{\pi^*}(\cdot,x)\|_{\Cc^{1}_{\alpha/2}(\T\times \R^d)}<\infty$, and $V^{\pi^*}$ is a strong solution to the EHJB system 
	\begin{align}
		0=&\partial_t u(0,x)+\frac{1}{2}\tr\left(\sigma\sigma^T(x) D^2_x u(0,x)\right) \notag\\ 
		&+\sup_{\varpi\in \Pc(U)} \left\{ \int_U \left[ b(x,a)D_x u(0,x) +r(0,x,a) \right] \varpi(da) \right\}  \; \text{a.e on $\R^d$},\label{eq:cha.weakHJB}\\
		0=&\partial_t u (t,x)+\frac{1}{2}\tr\left((\sigma\sigma^T)(x) D^2_x u (t,x)\right) +b^{\pi^*}(x)D_x u(t,x) +r^{\pi^*}(t,x) \label{eq:cha.weakHJB'} \; \text{a.e. on $\T\times\R^d$},
	\end{align}
	then $\pi^*$ is an equilibrium in Definition \ref{def:equi.relaxed}. 
\end{Theorem}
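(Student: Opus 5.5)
We follow the verification argument of Theorem \ref{thm:equil.existenceentropy} Part (i)--(ii). The classical It\^o formula is replaced by the It\^o--Krylov formula, which applies to $W^{1,2}_{p,\mathrm{loc}}$ functions with $p=\frac{d+2}{1-\alpha}>d+1$. Fix $x\in\R^d$ and an admissible $\pi'$, and write $u:=V^{\pi^*}$.

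\textbf{Step 1: the dynamic-programming identity.} By construction,
\[
J^{\pi'\otimes_\eps\pi^*}(x)=\E_x\Big[\int_0^\eps r^{\pi'_s}(s,X^{\pi'}_s)\,ds+u(\eps,X^{\pi'}_\eps)\Big],\qquad J^{\pi^*}(x)=u(0,x).
\]
Since $\sigma\sigma^T$ is uniformly elliptic and $b$ is bounded, Krylov's estimate $\E\int_0^T|f(s,X_s)|ds\le C\|f\|_{L^{d+1}}$ holds for $X^{\pi'}$ on a time interval and a ball. Localize with the exit time $\rho_N$ of $X^{\pi'}$ from $B_N(x)$; this is legitimate because $u$ and $D_xu$ are bounded. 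The It\^o--Krylov formula then gives
\[
\E_x[u(\eps,X^{\pi'}_\eps)]-u(0,x)=\E_x\int_0^\eps\Big(\partial_tu+\tfrac12\tr(\sigma\sigma^TD_x^2u)+b^{\pi'_s}D_xu\Big)(s,X^{\pi'}_s)\,ds .
\]

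\textbf{Step 2: a pointwise bound via the $t=0$ equation.} Define
\[
G(x):=\partial_tu(0,x)+\tfrac12\tr(\sigma\sigma^TD^2_xu(0,x))+\sup_{a}\big[b(x,a)D_xu(0,x)+r(0,x,a)\big].
\]
By \eqref{eq:cha.weakHJB}, $G=0$ almost everywhere. We then split the integrand at time $s$ into three parts:
\[
\Big[\partial_tu+\tfrac12\tr(\sigma\sigma^TD^2_xu)\Big](s,X_s)-\Big[\partial_tu+\tfrac12\tr(\sigma\sigma^TD^2_xu)\Big](0,X_s),
\]
\[
\text{the first-order terms at } (0,X_s)\text{ bounded by }G(X_s)=0\text{ a.e., and}
\]
\[
\text{differences in $s$ of } D_xu \text{ and } r.
\]
Because $X_s$ has a density for $s>0$, the a.e.\ bound on $G$ transfers to $X_s$. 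The first-order and reward terms are handled by the uniform $\Cc^{0,1}_{\alpha/2,\alpha}$ bound on $u$ and the time-H\"older regularity of $r$; they contribute $O(\eps^{1+\alpha/2})$.

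\textbf{Step 3: the second-order time shift (main obstacle).} The remaining term is
\[
\E\int_0^\eps\big[\partial_tu(s,X_s)-\partial_tu(0,X_s)\big]+\tfrac12\tr\big(\sigma\sigma^T[D^2_xu(s,\cdot)-D^2_xu(0,\cdot)]\big)(X_s)\,ds .
\]
For the $\partial_t u$ part, $\sup_x\|u(\cdot,x)\|_{\Cc^1_{\alpha/2}}<\infty$ yields $O(\eps^{1+\alpha/2})$. For $D^2_xu$, only $L^p$ control is available, with no pointwise continuity in $s$. We plan to use equation \eqref{eq:cha.weakHJB'} to express
\[
\tfrac12\tr(\sigma\sigma^TD^2_xu)(s,y)=-\partial_tu(s,y)-b^{\pi^*}(y)D_xu(s,y)-r^{\pi^*}(s,y)\quad\text{a.e.}
\]
Every term on the right is H\"older in $s$ uniformly in $y$, by the stated regularity and Assumption \ref{assume.r}. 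The same identity holds at $s=0$ (the equation holds a.e.\ on $\T\times\R^d$, and by continuity of the right side we get a version valid for every $s$). Hence the difference of the second-order terms equals a difference of H\"older-in-time quantities, bounded by $C s^{\alpha/2}$ a.e.\ in $y$, and the density of $X_s$ again transfers the bound.

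The delicate points are making this "for every $s$" statement rigorous (for instance via Fubini and a mollification in $t$, possibly taking the limit from the right at $s=0$) and controlling the localization errors from $\rho_N$ uniformly in $\eps$. Combining Steps 1--3,
\[
J^{\pi'\otimes_\eps\pi^*}(x)-J^{\pi^*}(x)\le C\eps^{1+\alpha/2}+o(\eps),
\]
which gives \eqref{eq:def.equi}.
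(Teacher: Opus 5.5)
Your proposal is correct and follows essentially the paper's route. The paper also uses \eqref{eq:cha.weakHJB'} to make the trace term $\alpha/2$-H\"older in $t$ for a.e.\ $x$, compares with \eqref{eq:cha.weakHJB} at $t=0$ to get a.e.\ optimality of $\pi^*$ there (\eqref{eq:thm.strong.7'}), propagates this to $t\in[0,1]$ with error $2Ct^{\alpha/2}$, and closes with the localized It\^o--Krylov formula; this is your three-term split done in a different order, and your use of Krylov's estimate instead of a density of $X^\varpi_s$ has the small advantage of also covering non-feedback $\pi'$. The localization needs no uniformity in $\eps$: for fixed $\eps$ the integrand bound $Cs^{\alpha/2}$ does not depend on $N$, so you can let $N\to\infty$ using only the boundedness of $u$ and $r$.
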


Next, under our previous model assumptions, we further show that the existence of a strong solution to the EHJB system \eqref{eq:cha.weakHJB}--\eqref{eq:cha.weakHJB'} is always guaranteed, thereby ensuring the existence of equilibrium in the original time-inconsistent control problem even when the classical solution of the EHJB system is unavailable.  

\begin{Theorem}[Existence of a strong solution to EHJB system]\label{thm:equi.existence}
	There exists a measurable strategy $\pi^*: \R^d\to \Pc(U)$ such that $V^{\pi^*}\in \Cc^{0,1 }_{\alpha/2, \alpha}(\T\times \R^d)\cap  W^{1,2, \ul}_{p}(\T\times \R^d)$ with $\sup_{x\in\R^d}\|V^{\pi^*}(\cdot,x)\|_{\Cc^{1}_{\alpha/2}(\T\times \R^d)}<\infty$, and $V^{\pi^*}$ is a strong solution to the EHJB system \eqref{eq:cha.weakHJB}--\eqref{eq:cha.weakHJB'}, and hence $\pi^*$ is an equilibrium in Definition \ref{def:equi.relaxed} by Theorem
	\ref{thm:equi.existence1}.  
\end{Theorem}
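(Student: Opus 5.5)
We take a sequence $\lambda_n\downarrow 0$ with $\lambda_n\le\lambda_0$. For each $n$, Theorem \ref{thm:equil.existenceentropy} Part (iii) gives a pair $(v^n,\pi^n)$ with $v^n=V^{\pi^n}_{\lambda_n}\in\Cc^{1,2}_{\alpha/2,\alpha}(\T\times\R^d)$ and $\pi^n(x,\cdot)=\Gamma_{\lambda_n}(x,D_xv^n(0,x),\cdot)$. The pair solves \eqref{eq:cha.HJBentropy}--\eqref{eq:cha.HJBentropy'}, and the bounds \eqref{eq:thm.entropy} hold with the $\lambda$-independent constant $A^*$ and the decay function $\Psi$.

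\textbf{Compactness.} On each $D_N$ the $\Cc^{0,1}_{\alpha/2,\alpha}$ norm of $v^n$ is bounded, and so are its $W^{1,2}_p$ norm and the time norm $\sup_x\|v^n(\cdot,x)\|_{\Cc^1_{\alpha/2}}$. Arzel\`a--Ascoli and the weak compactness of $W^{1,2}_p$ ($p>1$), combined with a diagonal argument as in Lemma \ref{lm:Phi.continuous}(i), give a subsequence $v^{n_k}$ and a limit $v^\infty$ with the following properties. First, $v^{n_k}\to v^\infty$ and $D_xv^{n_k}\to D_xv^\infty$ locally uniformly; in particular $D_xv^{n_k}(0,\cdot)\to D_xv^\infty(0,\cdot)$ locally uniformly. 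Second, $\partial_tv^{n_k}$ and $D^2_xv^{n_k}$ converge weakly in $L^p(D_N)$ for every $N$. The limit inherits all the bounds. Lower semicontinuity gives $v^\infty\in \Cc^{0,1}_{\alpha/2,\alpha}\cap W^{1,2,\ul}_p$, and the time norm $\|v^\infty(\cdot,x)\|_{\Cc^1_{\alpha/2}}\le A^*$ passes to the limit because the time derivatives are equi-Hölder. For the policies, regard $\pi^{n_k}(x,da)$ as Young measures on $\R^d\times U$. Since $U$ is compact they are tight, so along a further subsequence they converge narrowly to a measurable kernel $\pi^\infty:\R^d\to\Pc(U)$. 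Then for every bounded Carathéodory $f$, $\int f\,\pi^{n_k}(x,da)\,\phi(x)dx\to\int f\,\pi^\infty(x,da)\phi(x)dx$.

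\textbf{Passing to the limit in the PDEs.} Equation \eqref{eq:cha.HJBentropy} at $t=0$ is handled by the Laplace principle. Lemma \ref{lm:entropy.ln} gives $\lambda\Hc=O(\lambda|\ln\lambda|)\to0$. Moreover, $\lambda\ln\int_U\exp(\lambda^{-1}h)\,da\to\sup_U h$ uniformly on compacts in $(x,p)$, by the cone condition of Assumption \ref{assume.lipsa.U} and the Lipschitz property in $a$. Since $D_xv^{n_k}(0,\cdot)$ converges locally uniformly, the Hamiltonian term converges locally uniformly. The linear terms $\partial_t v+\frac12\tr(\sigma\sigma^TD_x^2v)$ at $t=0$ need a trace-in-time argument. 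I would use \eqref{eq:cha.HJBentropy'} to rewrite them as $-b^{\pi^n}D_xv^n-r^{\pi^n}-\lambda\delta\Hc$ and integrate against test functions. The product $b^{\pi^{n_k}}D_xv^{n_k}$ converges weakly to $b^{\pi^\infty}D_xv^\infty$, because $D_xv^{n_k}$ converges strongly and $\pi^{n_k}$ converges as Young measures; the term $r^{\pi^{n_k}}$ is handled the same way. Passing to the limit in the distributional form of \eqref{eq:cha.HJBentropy'} shows that $v^\infty$ solves \eqref{eq:cha.weakHJB'} a.e. with $\pi^\infty$. At $t=0$, the concentration of the Gibbs measures gives that $\pi^\infty(x)$ is supported on the argmax of $b(x,a)D_xv^\infty(0,x)+r(0,x,a)$. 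This turns \eqref{eq:cha.weakHJB'} at $t=0$ into \eqref{eq:cha.weakHJB}. The identification uses the continuity of $t\mapsto\partial_tv^\infty,D^2_xv^\infty$ in a suitable sense near $t=0$, which follows from the uniform time-Hölder bounds.

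\textbf{Identification $v^\infty=V^{\pi^\infty}$.} Apply the Itô--Krylov formula, valid for $W^{1,2}_p$ functions with $p>d+2$ under nondegenerate $\sigma$, to $v^\infty(t+s,X^{\pi^\infty}_s)$ and stop at the exit time of $B_N$ and at time $T$. Then let $N,T\to\infty$, using the decay $\|v^\infty\|_{\Cc^0([t,\infty)\times\R^d)}\le A^*\Psi(t)\to0$. Since $X^{\pi^\infty}$ has only bounded measurable drift, a unique strong solution exists by Veretennikov's theorem. This gives $v^\infty=V^{\pi^\infty}$ and completes the regularity claims. Theorem \ref{thm:equi.existence1} then yields the equilibrium.

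\textbf{Main obstacle.} The main obstacle is the $t=0$ slice. Weak $L^p$ convergence of $\partial_tv^{n}$ and $D_x^2v^n$ says nothing about their traces on $\{t=0\}$, yet \eqref{eq:cha.weakHJB} is an equation posed on that slice. I would try to obtain it by combining the limiting interior equation \eqref{eq:cha.weakHJB'} with the argmax characterization of $\pi^\infty$. The linear part at $t=0$ is then expressed through lower-order terms, which converge locally uniformly or in the Young-measure sense. This avoids taking traces of second derivatives.
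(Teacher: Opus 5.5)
Your proposal is correct and follows essentially the same route as the paper: diagonal compactness with local H\"older convergence, weak $W^{1,2}_p$ limits and Young-measure limits of $\pi^{n_k}$ (Lemma \ref{lm:thm.C12andyoung}), identification $v^\infty=V^{\pi^\infty}$ via the It\^o--Krylov formula and the decay $\Psi$, passage to the limit in \eqref{eq:HJBn'}, and recovery of the $t=0$ equation by combining the Laplace-principle argmax identity \eqref{eq:thm.supconverge0} with the time-H\"older continuity of the interior operator. The differences are only bookkeeping: you obtain $\sup_x\|v^\infty(\cdot,x)\|_{\Cc^1_{\alpha/2}}\le A^*$ by Arzel\`a--Ascoli on $\partial_t v^{n_k}(\cdot,x)$ rather than from the representation $v^\infty=V^{\pi^\infty}$, and you do the It\^o--Krylov identification after establishing \eqref{eq:cha.weakHJB'} rather than inside the limit along $n_k$.
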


In the following subsections, we first prove Theorem \ref{thm:equi.existence} by showing that the classical solutions to the EEHJB system will converge to a strong solution to the EHJB system when the entropy regularization vanishes. Then, by leveraging the convergence results, we will further develop some new verification arguments to etablish the weaker sufficient condition for the existence of equilibrium in Theorem \ref{thm:equi.existence1}. 

%\begin{Remark}
%We want to mention that the vanishing entropy framework for showing the existence of equilibrium in the current paper can also be applied in a much more general setting, including finite horizon, dependence on initial data, nonlinear structure (mean-variance terminal reward)....... And we will study in future.
%\end{Remark}

%``Take a feedback strategy $\pi^*: \R^d\mapsto \Pc(U)$. Then $V^{\pi^*}\in \Cc^{0,1 }_{\alpha/2, \alpha}(\T\times \R^d)\cap  W^{1,2, \ul}_{p}(\T\times \R^d)$ with $\sup_{x\in\R^d}\|V^{\pi^*}(\cdot,x)\|_{\Cc^{1}_{\alpha/2}(\T\times \R^d)}<\infty$, and $V^{\pi^*}$ is a strong solution to \eqref{eq:cha.weakHJB'}. Moreover, the following two statements are equivalent.
%	\begin{itemize}
	%		\renewcommand{\labelitemii}{$\bullet$}
	%	\item $\pi^*$ is an equilibrium satisfying \ref{eq:def.equi} in Definition \ref{def:equi.relaxed};
	%		\item $V^{\pi^*}$ satisfies \eqref{eq:cha.weakHJB}. 
	%	\end{itemize}
%	As a result, an equilibrium exists satisfying \ref{eq:def.equi} in Definition \ref{def:equi.relaxed}."

\subsection{Proof of Theorems \ref{thm:equi.existence}}

Let us take a sequence $(\lambda_n)_{n\in \N}$ with $\lambda_n\to 0+$. For each $n\in \N$, Theorem \ref{thm:equil.existenceentropy} Part (iii) states the existence of a regularized equilibrium $\pi^n$ with entropy parameter $\lambda_n$ in Gibbs form such that $\|V^{\pi^n}_{\lambda_n}\|_{\Cc^{0,1 }_{\alpha/2,\alpha}(\T\times \R^d)}\leq A^*$ for all $n\in \N$. Let us further denote $v^n(t,x):=V^{\pi^n}_{\lambda_n}(t,x)$ for each $n$. It then follows from Theorem \ref{thm:equil.existenceentropy} Part (i) that $(v^n, \pi^n)$ satisfies
\begin{align}
	0=& \partial_t v^n(0,x)+\frac{1}{2}\tr\left(\sigma\sigma^T(x,a) D^2_x v^n(0,x)\right)  
	+\lambda_n \ln \left\{ \int_U \exp\left(\frac{1}{\lambda}\left[ b(x,a)D_x v^n(0,x) +r(0,x,a)\right] \right) \right\}, \label{eq:HJBn}\\
	0=&\partial_t v^n(t,x)+\frac{1}{2}\tr\left((\sigma\sigma^T)^{\pi^n}(x) D^2_x v^n(t,x)\right) 
	+  b^{\pi^n}(x)D_x v^n(t,x) +r^{\pi^n}(t,x)+\lambda_n\delta(t)\Hc(\pi^n(x)).   \label{eq:HJBn'}
\end{align}
And we will construct a strong (instead of classic) solution to the EHJB system \eqref{eq:cha.weakHJB}--\eqref{eq:cha.weakHJB'} by searching a subsequence limit of $(v^n, \pi^n)_{n\in \infty}$.

\begin{Lemma}\label{lm:thm.C12andyoung}
	There exist a subsequence $(v^{n_k}, \pi^{n_k})_{k\in \N}$ and a pair $(v^\infty, \pi^\infty)$ satisfying the following.
	\begin{itemize}
		\item[(i)] $v^\infty\in \Cc^{0,1 }_{\alpha/2, \alpha}(\T\times \R^d)\cap  W^{1,2, \ul}_{p}(\T\times \R^d)$, and for  each $N\in \N$ and any test function $\phi\in L^q(D_N)$, it holds that
		\bee\label{eq:lm.converge}  
		\begin{aligned}
			&\lim_{k\to\infty}\|v^{n_k}-v^\infty\|_{\Cc^{0,1 }_{\beta/2, \beta}(D_N)}=0 \quad\forall\,0<\beta<\alpha,\\ 
			&\lim_{k\to\infty}\int_{D_N} \phi(t,x) \left(\partial^l_tD^a_{x} v^{n_k}(t,x)-\partial^l_tD_{x}^a v^\infty(t,x) \right) dtdx = 0\quad \forall 0\leq 2l+|a|_{l^1}\leq 2.
		\end{aligned}
		\eee 
		\item[(ii)] $\pi^\infty: \R^d\to \Pc(U)$ is Borel measurable, and $\pi^{n_k}$ converges to $\pi^\infty$ in the sense that
		\be\label{eq:convergence.young}  
		\lim_{k\to\infty}\int_{\R^d} \left(\int_U \phi(x,a)\pi^{n_k}(x,a)da\right)dx = \int_{\R^d} \left(\int_U \phi(x,a)\pi^\infty(x,da)\right)dx
		\ee 
		for any test function $\phi(x,a):\R^d\times U\to \R^d$ that is continuous on the control variable $a$ for each $x$ and satisfies
		$
		\int_{\R^d} \max_{a} |\phi(x,a)| dx<\infty.
		$
		Moreover,
		\be\label{eq:lm.brconverge}  
		b^{\pi^{n_k}}(\cdot) \to b^{\pi^{\infty}}(\cdot) \text{ and }  r^{\pi^{n_k}}(t,\cdot)\to  r^{\pi^{\infty}} (t,\cdot)  \text{ in the weak-$*$ topology of $L^\infty(\R^d)$}.
		%\int_\R^d b^{\pi^{n_k}}(x) \tilde\phi(x)dx\to \int_\R^d b^{\pi^{\infty}}(x) \tilde\phi(x)dx, \quad \int_\R^d r^{\pi^{n_k}}(t,x) \tilde\phi(x)dx\to \int_\R^d r^{\pi^{\infty}} (t,x)\tilde\phi(x)dx,
		\ee 
		\item[(iii)] $V^{\pi^\infty}=v^\infty$, $\sup_{x\in \R^d}\|v^{\infty}(\cdot, x)\|_{\Cc^1_{\alpha/2}(\T)}\leq A^*$. %and it holds that
		%\be\label{eq:vnvinftyt.equi}  
		%\lim_{k\to\infty}\sup_{x\in \R^d}\|v^{n_k}(\cdot, x)-v^{\infty}(\cdot, x)\|_{\Cc^1_{\alpha/2}(\T)}=0.
		%\ee 
	\end{itemize}
\end{Lemma}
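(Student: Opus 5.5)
The plan has three stages: first the compactness statements in (i) and (ii), then a passage to the limit in \eqref{eq:HJBn'}, and finally the identification $V^{\pi^\infty}=v^\infty$ in (iii). All of it rests on the $\lambda$-independent bounds \eqref{eq:thm.entropy} from Theorem \ref{thm:equil.existenceentropy}(iii): $\|v^n\|_{\Cc^{0,1}_{\alpha/2,\alpha}(\T\times\R^d)}\vee\|v^n\|_{W^{1,2,\ul}_p(\T\times\R^d)}\le A^*\Psi(0)=A^*$ and $\sup_x\|v^n(\cdot,x)\|_{\Cc^1_{\alpha/2}(\T)}\le A^*$.

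\textbf{Step 1: compactness.} Fix $N$. Two facts give compactness on $D_N$:
\begin{itemize}
\item $\Cc^{0,1}_{\alpha/2,\alpha}(D_N)$ embeds compactly into $\Cc^{0,1}_{\beta/2,\beta}(D_N)$;
\item $W^{1,2}_p(D_N)$ is reflexive, since $1<p<\infty$.
\end{itemize}
So I can extract a subsequence converging in $\Cc^{0,1}_{\beta/2,\beta}(D_N)$ and with all derivatives $\partial_t^lD_x^a v^n$ converging weakly in $L^p(D_N)$, i.e.\ against test functions in $L^q$. The limits are consistent: the weak derivative limits are the distributional derivatives of the uniform limit. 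A diagonal argument over $N$, exactly as in Lemma \ref{lm:Phi.continuous}(i), produces $v^\infty$ defined on all of $\T\times\R^d$. Lower semicontinuity of norms under these convergences gives the bounds $\|v^\infty\|_{\Cc^{0,1}_{\alpha/2,\alpha}}\le A^*$ and $\|v^\infty\|_{W^{1,2,\ul}_p}\le A^*$. The latter holds because each unit cylinder $D_1(t,x)$ lies in some $D_N$, and the bound is uniform in $(t,x)$. The same argument gives $\sup_x\|v^\infty(\cdot,x)\|_{\Cc^1_{\alpha/2}(\T)}\le A^*$, since pointwise limits of $\partial_t v^n(\cdot,x)$ exist along a further subsequence by Arzelà–Ascoli in $t$ for each fixed $x$.

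For (ii), view $\pi^n(x,a)\,da\,dx$ as Young measures on $\R^d\times U$, with $U$ compact. They are tight in $a$, so the fundamental theorem of Young measures gives a further subsequence and a Borel kernel $\pi^\infty(x,da)$ for which \eqref{eq:convergence.young} holds for all Carathéodory integrands $\phi$ with integrable envelope $\max_a|\phi|$. Applying this to $\phi(x,a)=b(x,a)\psi(x)$ and $\phi(x,a)=r(t,x,a)\psi(x)$ with $\psi\in L^1$ yields the weak-$*$ convergences \eqref{eq:lm.brconverge}.

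\textbf{Step 2: the limit PDE.} The aim is to pass to the limit in \eqref{eq:HJBn'} against a test function $\phi\in L^q(D_N)$. Three of the terms are straightforward:
\begin{itemize}
\item $\partial_t v^n$ and $\tr(\sigma\sigma^TD_x^2v^n)$ converge weakly, using that $\sigma$ is bounded;
\item $\lambda_n\delta\Hc(\pi^n)\to0$ uniformly, because Lemma \ref{lm:entropy.ln} gives $|\lambda_n\Hc(\pi^n)|\le\lambda_n(K_6+K_7|\ln\lambda_n|+K_8\ln(1+A^*))\to0$;
\item $r^{\pi^n}\to r^{\pi^\infty}$ weakly by \eqref{eq:lm.brconverge}.
\end{itemize}
The cross term $b^{\pi^n}D_xv^n$ is a product of a weak-$*$ convergent sequence ($b^{\pi^n}$) and a strongly convergent one ($D_xv^n\to D_xv^\infty$ uniformly on $D_N$), so it also converges. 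Hence $v^\infty$ solves \eqref{eq:cha.weakHJB'} with $\pi^\infty$ a.e.\ on every $D_N$, and therefore a.e.\ on $\T\times\R^d$.

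\textbf{Step 3: identification $V^{\pi^\infty}=v^\infty$ (main obstacle).} This is where I expect the real difficulty. $v^\infty$ is only in $W^{1,2,\ul}_p$, and $b^{\pi^\infty}$ is merely bounded and measurable. Two tools handle this:
\begin{itemize}
\item with $\sigma$ uniformly elliptic (Assumption \ref{assume.b.sigma}) and $b^{\pi^\infty}$ bounded, the SDE driven by $\pi^\infty$ has a unique strong solution by Veretennikov's theorem;
\item Krylov's estimate lets me apply the Itô–Krylov formula to $v^\infty(t+s,X^{\pi^\infty}_s)$, since $p=\frac{d+2}{1-\alpha}>d+1$.
\end{itemize}
Proceeding as in Step 2 of Lemma \ref{prop:iteration.estimateentropy}, I localize with the exit time from $B_N$, use \eqref{eq:cha.weakHJB'}, which holds a.e.\ (and a.e.\ suffices for Krylov's formula), and let $N\to\infty$, then $T\to\infty$. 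The terminal term is controlled by $\|v^\infty\|_{\Cc^0([T,\infty)\times\R^d)}\le A^*\Psi(T)\to0$. Here $\Psi$ decay passes to the limit from \eqref{eq:thm.entropy}. The dominated convergence step uses \eqref{eq:assume.integralr}. The delicate points are checking that the a.e.\ equation is invisible to the law of $X^{\pi^\infty}$, which Krylov's bound on occupation densities in $L^{d+1}$ ensures, and handling the interval near $t=0$. If the direct route fails, my fallback is to compare $v^n$ with $V^{\pi^\infty}$ via Itô–Krylov applied to $v^n$ along $X^{\pi^\infty}$ and pass $n\to\infty$, using that $b^{\pi^n}-b^{\pi^\infty}$ integrates to zero against the occupation density weakly.
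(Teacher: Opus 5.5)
Your proposal is correct and is essentially the paper's argument: the same compactness (Arzel\`a--Ascoli in $\Cc^{0,1}$, weak $L^p$ limits of the derivatives, diagonal extraction), the same Young-measure step with test functions $b\psi$ and $r\psi$, and the same It\^o--Krylov identification with localization and $\Psi$-decay. The only reorganization is that you first derive the a.e.\ limit equation against general $L^q$ test functions and then invoke Krylov's estimate along $X^{\pi^\infty}$, whereas the paper plugs the $L^q$ occupation density of the killed process directly into the limit passage and derives the a.e.\ equation only later, in the proof of Theorem \ref{thm:equi.existence}.

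One small citation fix: the $\lambda$-uniform bound $\sup_x\|v^n(\cdot,x)\|_{\Cc^1_{\alpha/2}(\T)}\le A^*$ that your lower-semicontinuity argument for (iii) needs comes from \eqref{eq:prop.est} in Lemma \ref{prop:iteration.estimateentropy} applied at the fixed point, since the second line of \eqref{eq:thm.entropy} only records $A_\lambda$. The paper sidesteps this by reading the bound off the representation $v^\infty=V^{\pi^\infty}$.
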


\begin{proof}
	{\bf Part (i).} We first show that the limit of a subsequence exists in the space $\Cc^{0,1 }_{\alpha/2, \alpha}(\T \times \R^d)\cap W^{1,2, \ul}_{p}(\T\times \R^d)$ using a diagonal argument. 
	Thanks to Theorem \ref{thm:equil.existenceentropy}, for each $N$, it holds that
	$$
	\sup_{n\in \N}\|v^{n}\|_{\Cc^{0,1 }_{\alpha/2, \alpha}(D_N)}\leq A^*,  \sup_{n\in \N}\|v^{n}\|_{ W^{1,2}_{p}(D_N)}\leq A_N A^*,
	$$
	where $A^*$ is the finite constant stated in Theorem \ref{thm:equil.existenceentropy}, and $A_N=C (N^{d+1})^{1/p}$ with $C$ being a finite constant independent of $d, p$ and $N$. For each $N$, define 
	$$\Ec_N:= \left\{w\in \Cc^{0,1 }_{\alpha/2, \alpha}(D_N)\cap W^{1,2}_{p}(D_N): \|w\|_{\Cc^{0,1 }_{\alpha/2, \alpha}(D_N)}\leq A^*,  \|w\|_{ W^{1,2}_{p}(D_N)}\leq A_N A^* \right\}.$$
	Take $(v^{0,n})_{n\in \N}=(v^n)_{n\in \N}$ and fix an arbitrary $0<\beta<\alpha$. We shall repeat the following discussion for each $N=0,1,2,\cdots$. Because $\Cc^{0,1 }_{\alpha/2, \alpha}(D_N)$ is compactly embedded in  $\Cc^{0,1 }_{\beta/2, \beta}(D_N)$, by similar arguments for \eqref{eq:lm.phicontinuous0} and \eqref{eq:lm.phicontinuous1}, we can find a subsequence $(v^{N-1, n_k})_{k\in \N}$ of $(v^{N-1,n})_{n\in N}$ and $u^N\in \Ec_N$ such that $\|u^N\|_{\Cc^{0,1}_{\alpha/2,\alpha}(D_N)}\leq A^*$ and
	$$
	\lim_{k\to\infty}\|v^{N-1, n_k}-u^N\|_{\Cc^{0,1 }_{\beta/2, \beta}(D_N)}= 0.
	$$
	{Since $\sup_{n\in \N}\|v^{N-1, n_k}\|_{W^{1,2}_{p}(D_N)}\leq A_NA^*$, by Banach–Alaoglu theorem, we can extract a subsequence of $(v^{N-1, n_k})_{k\in \N}$, denoted by $(v^{N, n})_{n\in \N}$, such that, for any test function $\phi\in L_q(D_N)$, 
		$$
		\begin{aligned}
			&\lim_{n\to\infty}\int_{D_N} \phi(t,x)\partial_t v^{N,n}(t,x)dt dx =\int_{D_N} \phi(t,x) w(t,x)dt dx,\\
			&\lim_{n\to\infty}\int_{D_N} \phi(t,x)\partial^{2}_{x_ix_j} v^{N,n}(t,x)dt dx =\int_{D_N} \phi(t,x) w^{ij}(t,x)dt dx, \quad \forall 1\leq i,j\leq d,
		\end{aligned}
		$$
		where the functions $w, w^{ij}$ belong to $L_p(D_N)$. Then for any test function $\phi\in C^\infty_c(D_N)$, $\partial_t \phi, \partial_{x_i} \phi\in L_q(D_N)$ and for each $1\leq i,j\leq d$,
		$$
		\begin{aligned}
			-\int_{D_N} \phi(t,x) w^{ij}(t,x)dt dx=&- \lim_{n\to\infty}\int_{D_N} \phi(t,x)\partial^{2}_{x_ix_j} v^{N,n}(t,x)dt dx\\ =&\lim_{n\to\infty}\int_{D_N} \partial_{x_i}\phi(t,x)\partial_{x_j} v^{N,n}(t,x)dt dx = \int_{D_N} \partial_{x_i}\phi(t,x)\partial_{x_j} u^N(t,x)dt dx.
		\end{aligned}
		$$
		Thus, $w^{ij}$ is the weak derivative $\partial_{x_i} (\partial_{x_j} u^N)$ for each $1\leq i,j\leq d$. A similar argument shows that $w$  is the weak derivative $\partial_t u^N$. Therefore, $u^N\in W^{1,2}_p(D_N)$, and 
		\be\label{eq:lm.C12} 
		\lim_{n\to\infty}\int_{D_N} \phi(t,x) \left(\partial^l_tD^a_{x} v^{N, n}(t,x)-\partial^l_tD_{x}^a u^N(t,x) \right) dtdx = 0\quad \forall 0\leq 2l+|a|_{l^1}\leq 2, \forall \phi\in L^q(D_N).
		\ee}
	%{\color{red}Due to $\sup_{n\in \N}\|v^{N-1, n_k}\|_{W^{1,2}_{p}(D_N)}\leq A_NA^*$, by Banach–Alaoglu theorem, we can conclude  that $u^N\in W^{1,2}_{p}(D_N)$ with $\|u^N\|_{W^{1,2}_{p}(D_N)}\leq A_NA^*$ and there exists a subsequence of $(v^{N-1, n_k})_{k\in \N}$, denoted by $v^{N, n}_{n\in \N}$, such that 
		%$\|\tilde u^N\|_{W^{1,2}_p(D_N)}\leq A_N A^*$ and 
		%	\be\label{eq:lm.C12} 
		%	\lim_{n\to\infty}\int_{D_N} \phi(t,x) \left(\partial^l_tD^a_{x} v^{N, n}(t,x)-\partial^l_tD_{x}^a u^N(t,x) \right) dtdx = 0\quad \forall 0\leq 2l+|a|_{l^1}\leq 2, \forall \phi\in L^q(D_N).
		%	\ee}
	As a result, \eqref{eq:lm.C12} and the following holds for the subsequence $(v^{N, n})_{n\in \N}$ and $u^N$
	$$
	\lim_{n\to\infty}\|v^{N, n}-u^N\|_{\Cc^{0,1 }_{\beta/2, \beta}(D_N)}= 0.
	$$
	Then, for each $N$, we obtain a limit $u^N$. Note that $u^{N+1}=u^N$ on $D_N$ for each $N\in \N$. Hence, a function $v^\infty\in \Cc^{0,1}_{\alpha/2,\alpha}(\T\times \R^d)$ exists such that $v^\infty=u^N\in W^{1,2}_p(D_N)$ on $D_N$ for each $N\in \N$ and
	$$
	\|v^\infty\|_{\Cc^{0,1 }_{\alpha/2, \alpha}(\T\times \R)}\leq A^*.
	$$
	By taking the diagonal subsequence $(v^{N,N})_{N\in \N}$, we obtain for each $K\in \N$, any $l\in \N$ and any multi-index $a$ satisfying $ 0\leq 2l+|a|_{l^1}\leq 2$,
	$$
	\begin{aligned}
		&\lim_{N\to\infty}\|v^{N,N}-v^\infty\|_{\Cc^{0,1 }_{\beta/2, \beta}(D_K)}=0,\quad\forall\,0<\beta<\alpha,\\ 
		&\lim_{N\to\infty}\int_{D_K} \phi(t,x) \left(\partial^l_tD^a_{x} v^{N,N}(t,x)-\partial^l_tD_{x}^a v^\infty(t,x) \right) dtdx = 0,\quad \forall \phi\in L^q(D_K).
	\end{aligned}
	$$
	Moreover, by fixing $(t_0,x_0)\in \T\times \R^d$, the second line above implies that, for each $l\in \N$ and the multi-index $a$ such that $0\leq 2l+|a|_{l^1}\leq 2$,
	$$
	\lim_{N\to\infty}\int_{D_1(t_0,x_0)} \phi(t,x) \left(\partial^l_tD^a_{x} v^{N, N}(t,x)-\partial^l_tD_{x}^a v^\infty(t,x) \right) dtdx = 0, \forall \phi\in L^q(D_1(t_0,x_0)).
	$$
	{Since for a generic function $g\in L^p(D_1(t_0,x_0))$,
		$$
		\|g\|_{L^p(D_1(t_0,x_0))}=\sup_{\|\phi\|_{L^q(D_1(t_0,x_0))}\leq 1} \int_{D_1(t_0,x_0)} \phi(t,x) g(t,x) dtdx,
		$$
		for any test function $\phi$ with $\|\phi\|_{L^q(D_1(t_0,x_0))}\leq 1$, we have
		$$
		\begin{aligned}
			\int_{D_1(t_0,x_0)} \phi(t,x) \partial^l_tD_{x}^a v^\infty(t,x) dtdx =&  \lim_{N\to\infty}\int_{D_1(t_0,x_0)} \phi(t,x) \partial^l_tD^a_{x} v^{N, N}(t,x) dtdx\\
			\leq & \liminf_{N\to\infty} \|\partial_t D^a_x v^{N,N}\|_{L^p(D_1(t_0,x_0))}.
		\end{aligned}
		$$
		It follows that 
		$$
		\|\partial_t D^a_x v^\infty\|_{L^p(D_1(t_0,x_0))} \leq  \liminf_{N\to\infty} \|\partial_t D^a_x v^{N,N}\|_{L^p(D_1(t_0,x_0))}\leq A^*\quad \forall 0\leq 2l+|a|_{l^1}\leq 2,
		$$
		%by $ \|v^{N,N}\|_{W^{1,2, \ul}_{p}(\T\times)}\leq A^*$, we have
		and hence, $\|v^\infty\|_{ W^{1,2,\ul}_{p}(\T\times\R^d)}\leq A^*$.}
	
	{\bf Part (ii).} Next, we consider the associated sequence $(\pi^{N,N})_{N\in \N}$ with respect to the sequence $(v^{N,N})_{N\in \N}$. By Young measure theory (see, e.g., \cite[Chapter IV]{warga2014optimal}), a subsequence $(\pi^{n_k})_{k\in \N}$ of $(\pi^{N,N})_{N\in \N}$ and a Borel function $\pi^\infty: \R^d\to \Pc(U)$ exist such that the convergences stated in part (ii) holds. 
	In conclusion, the corresponding subsequence $(v^{n_k}, \pi^{n_k})_{k\in \N}$ and the pair $(v^\infty, \pi^\infty)$ indeed satisfy \eqref{eq:lm.brconverge} and Part (i).
	
	% As a result, of the continuity of $b, r$ on $a$, it follows from \eqref{eq:convergence.young} that
	Moreover, for any test function $\phi\in L^1(\R^d)$, by Assumption \ref{assume.lipsa.U},  $a\mapsto b(x,a)\phi(x)$ is Lipschitz continuous for all $x$ and $\int_{\R^d} \max_{a\in U}|b(x,a)\phi(x)|dx\leq \|b(x,a)\|_{L^\infty(\R^d\times U)}\int_{\R^d}\phi(x)dx<\infty$.
	As a result, 
	$$
	\begin{aligned}
		\lim_{k\to\infty}\int_{\R^d} b^{\pi^{n_k}}(x)\phi(x)dx =&\lim_{k\to\infty}\int_{\R^d} \left( \int_U [b(x,a)\phi(x)]\pi^{n_k}(x,a)da\right)dx \\
		=& \int_{\R^d} \left( \int_U [b(x,a)\phi(x)]\pi^{\infty}(x,da)\right)dx 
		= \int_{\R^d} b^{\pi^{\infty}}(x)\phi(x)dx\quad \forall \phi\in L^1(\R^d).
	\end{aligned}
	$$
	A similar argument also holds for $r(t,\cdot)$. As a result, \eqref{eq:lm.brconverge}  holds.
	%for any $\tilde\phi\in L^1(\R^d)$.
	
	{\bf Part (iii). }  Now we show $V^{\pi^\infty}(t,x)=v^\infty(t,x)$ for any $(t,x)\in \T\times \R^d$. By Theorem \ref{thm:equil.existenceentropy} Part (iii), 
	$$
	\sup_{k\in \N}\|v^{n_k}\|_{\Cc^{0,1}_{\alpha/2, \alpha}([t,\infty)\times \R^d)}\leq A^* \Psi(t).
	$$
	Then it follows from Part (i) that,  
	\be\label{eq:lm.vinftytvanish}   
	\lim_{t\to \infty} \|v^{\infty}\|_{\Cc^{0,1}_{\alpha/2, \alpha}([t,\infty)\times \R^d)}\leq \lim_{t\to\infty}\sup_{k\in \N}\|v^{n_k}\|_{\Cc^{0,1}_{\alpha/2, \alpha}([t,\infty)\times \R^d)}=0.
	\ee 
	
	Fix an arbitrary $(t,x)\in \T\times {\R^d}$. By the fact $b^{\pi^\infty}\in L^\infty({\R^d})$ and conditions on $\sigma$ stated in Assumption \ref{assume.b.sigma}, the SDE $$
	dX^{\pi^\infty}_s=b^{\pi^\infty}(X_s^{\pi^\infty})dt+\sigma(X^{\pi^\infty}_s)dW_s\quad \text{with }X^{\pi^\infty}_0=x
	$$ 
	admits a unique strong solution (see \cite[Theorem 1]{veretennikov1981strong}) with a density function belonging to $L^q([0,T]\times \R^d)$ for any $T>0$ with $q=\frac{d+2}{d+1+\alpha}=\frac{p}{1-p}$ (see, e.g., \cite[Theorem 9.1.9]{stroock2007multidimensional}).\footnote{The result in \cite[Theorem 9.1.9]{stroock2007multidimensional} is more general that the density belongs to $L^q(\T\times \R^d)$ for all $q>1$.}
	%{\color{blue}Moreover, $X^{\pi^\infty}_s$ has a locally H\"older continuous density for any $s>0$. (not sure if the density is H\"older continuous)} 
	Given an arbitrary $\eps>0$, thanks to \eqref{eq:assume.integralr} and \eqref{eq:lm.vinftytvanish}, there exits some $T_0>0$ such that 
	\be\label{eq:lm.verify0}  
	\left| E_x\left[ \int_{T-t}^\infty r^{\pi^\infty}(t+s,X^{\pi^\infty}_s) d  \right] \right| + \|v^{\infty}\|_{L^\infty([T,\infty)\times \R^d)}<\frac{\eps}{3}\quad \forall T\geq T_0.
	\ee  
	Define $\rho_N:=\inf_{s\geq 0}\{ X^{\pi^\infty}_s\notin B_N(x)\}$, then there exists $N_0>0$ such that $ 
	\P_x(\rho_N\leq T_0)\leq \frac{\eps}{3A^*}, 
	$
	for all $N\geq N_0$. Then, it follows from \eqref{eq:lm.verify0} that
	\be\label{eq:lm.verify1}   
	\begin{aligned}
		&\E_x[v^{\infty}(N\wedge \rho_N, X^{\pi^\infty}_{N\wedge \rho_N-t})]\\
		&= \E_x[v^{\infty}(N\wedge \rho_N, X^{\pi^\infty}_{N\wedge \rho_N-t}){\mathbf 1}_{\{\rho_N \leq T_0 \}}] +\E_x[v^{\infty}(N\wedge \rho_N, X^{\pi^\infty}_{N\wedge \rho_N-t}){\mathbf 1}_{\{\rho_N \geq T_0 \}}]\\
		&\leq  \|v^\infty\|_{L^\infty(\T\times \R^d)} \P_x(\rho_N \leq T_0)+ \|v^{\infty}\|_{L^\infty([T,\infty)\times \R^d)}\leq A^*\frac{\eps}{3A^*}+\frac{\eps}{3}=\frac{2\eps}{3} \quad \forall N\geq N_0\vee T_0.
	\end{aligned}
	\ee 
	%$$v^{\pi^\infty}(t,x)-\E_x\left[ \int_0^{T\wedge \rho_{N}-t} r^{\pi^\infty}(t+s,X^{\pi^\infty}_s) d  \right]\leq \eps,$$
	Take any $N\in\N$ with $N\geq T\vee N_0$. In view of $v^\infty\in W^{1,2}_p(D_N)\cap \Cc^{0,1}_{\alpha/2,\alpha}(D_N)$, 
	we can apply It\^o-Krylov formula (see e.g. \cite[Section 2.10, Theorem 1]{krylov1980controlled})  to $v^\infty(t+s, X^{\pi^\infty}_s)$ to obtain
	\begin{align*}
		&dv^\infty(t+s, X^{\pi^\infty}_s)\\
		&= \left(\partial_t v^\infty(t+s, X^{\pi^\infty}_s)+ b^{\pi^\infty}(x) D_x v^\infty(t+s, X^{\pi^\infty}_s)+\frac{1}{2}\tr\bigg((\sigma\sigma^T)(X^{\pi^\infty}_s) D^2_x v^\infty(t+s, X^{\pi^\infty}_s) \bigg)\right) ds\\
		&\qquad+ \sigma(X^{\pi^\infty}_s)D_x v^\infty(t+s, X^{\pi^\infty}_s) dW_s.
	\end{align*}
	Denote $p^{x}(s,y):= \P_x(X^{\pi^\infty}_s\in dy | s\leq \rho_N)$ and note that $p^x\in L^q(D_N)$.\footnote{Since $C:=\P^x(N\leq \rho_N)$ is a finite value for a fixed $N$, and $\P^x(s\leq \rho_N)\geq \P^x(N\leq \rho_N)$ for all $s\in[0,s]$, we have $\int_{D_N} |p^x(d,y)|^qdsdy=\int_{D_N}\left| \frac{\P^x(X^{\pi^\infty}_s\in dy, s\leq \rho_N)}{\P^x(s\leq \rho^N)}\right|^qds dy\leq \frac{1}{C^q}\int_{D_N} |\P^x(X^{\pi^\infty}_s\in dy)|^qdsdy<\infty$.} %the density of $X^{\pi^\infty}$ before $\rho_N$ of $X^{\pi^\infty}$ with $X^{\pi^\infty}_0=x$. 
	%that the $Y_s:=\int_0^s \sigma(X^{\pi^\infty})D_x v^\infty(t+r, X^{\pi^\infty}_r)dW_r$ is a martingale, 
	Integrating both sides of  above equality over $[0, N\wedge \rho_N-t]$ and then taking expectations  leads to
	$$
	\begin{aligned}
		&v^{\infty}(t,x)- \E_x[v^{\infty}(N\wedge \rho_N, X^{\pi^\infty}_{N\wedge \rho_N-t})]\\	
		&=\E_x\bigg[  \int_0^{N\wedge \rho_N-t} -\bigg(v^\infty_t(t+s, X^{\pi^\infty}_s)+ b^{\pi^\infty}(X^{\pi^\infty}_s) D_x v^\infty(t+s, X^{\pi^\infty}_s)\\
		&\qquad \qquad \qquad \qquad \qquad +\frac{1}{2}\tr\left((\sigma\sigma^T)(X^{\pi^\infty}_s) D^2_x v^\infty(t+s, X^{\pi^\infty}_s) \right) \bigg) ds \\
		%	&+\E_x\left[ \int_0^T \sqrt{(\sigma^2)^{\pi^*}(X^{\pi^*}_s)}v^\infty_{x}(t+s, X^{\pi^*}_s)dW_s \right]\\
		%&\qquad +v^\infty(t+T, X^{\pi^\infty}_T) \bigg]\\
		&= \int_{D_N} -\bigg[v^\infty_t(s, y)+ b^{\pi^\infty}(y) D_x v^\infty(s, y)+\frac{1}{2}\tr\left((\sigma\sigma^T)(y) D^2_x v^\infty(s, y) \right)\bigg] \bigg(\mathbf{1}_{\{s\geq t\}}p^{x}(s-t,y)\bigg)dy ds\\
		&=\lim_{k\to\infty} \int_{D_N} -\bigg[v^{n_k}_t(s, y)+ b^{\pi^{n_k}}(y) D_x v^{n_k}(s, y)+\frac{1}{2}\tr\left((\sigma\sigma^T)(y) D^2_x v^{n_k}(s, y) \right)\bigg] \bigg(\mathbf{1}_{\{s\geq t\}}p^{x}(s-t,y)\bigg)dy ds\\
		&=\lim_{k\to\infty} \int_{D_N}r^{\pi^{n_k}}(s, y)\bigg(\mathbf{1}_{\{s\geq t\}}p^{x}(s-t,y)\bigg)dy ds\\
		&=\int_{D_N}r^{\pi^{\infty}}(s, y)\bigg(\mathbf{1}_{\{s\geq t\}}p^{x}(s-t,y)\bigg)dy ds\\
		&= \E_x\left[  \int_0^{N\wedge \rho_N-t} r^{\pi^\infty}(t+s,X^{\pi^\infty}_s) ds \right],
	\end{aligned}
	$$
	where the third equality follows from Part (i) and the convergence of $b^{\pi^{n_k}}$ to $b^{\pi^\infty}$ in \eqref{eq:lm.brconverge}, the forth equality follows from \eqref{eq:HJBn'} and $\|\lambda_{n_k}\Hc(\pi^{n_k})\|_{L^\infty(\R^d)}\to 0$, and the fifth inequality follows from the convergence of $r^{\pi^{n_k}}$ to $r^{\pi^\infty}$ in \eqref{eq:lm.brconverge}. By virtue of \eqref{eq:lm.verify0} and \eqref{eq:lm.verify1}, the result above implies that 
	$$
	\begin{aligned}
		&|v^{\infty}(t,x)- V^{\pi^\infty}(t,x)|\\
		&=\left|   \E_x[v^{\infty}(N\wedge \rho_N, X^{\pi^\infty}_{N\wedge \rho_N-t})]+\E_x\left[  \int_0^{N\wedge \rho_N-t} r^{\pi^\infty}(t+s,X^{\pi^\infty}_s) ds \right]-V^{\pi^\infty}(t,x)\right|\\
		&\leq \left|   \E_x[v^{\infty}(N\wedge \rho_N, X^{\pi^\infty}_{N\wedge \rho_N-t})] \right| +\left| E_x\left[ \int_{N\wedge \rho_N-t}^\infty r^{\pi^\infty}(t+s,X^{\pi^\infty}_s) d  \right] \right|\leq \eps,
	\end{aligned}
	$$
	for an arbitrary $\eps$. Consequently, $v^{\infty}(t,x)= V^{\pi^\infty}(t,x)$ holds as desired. Moreover, following the same arguments for \eqref{eq:prop.Vt}--\eqref{eq:prop.tnorm}, we conclude that 
	$$
	\sup_{x\in \R^d}\|v^{\infty}(\cdot, x)\|_{\Cc^1_{\alpha/2}(\T)}\leq A^*,
	$$
	which completes the proof. 
\end{proof}

\begin{proof}[\bf Proof of Theorem \ref{thm:equi.existence}:] We now show that $\pi^\infty$ in Lemma \ref{lm:thm.C12andyoung} satisfies the statement in Theorem \ref{thm:equi.existence}. It is directly from Lemma \ref{lm:thm.C12andyoung} that $v^\infty=V^{\pi^\infty}\in \Cc^{0,1 }_{\alpha/2, \alpha}(\T\times \R^d)\cap  W^{1,2, \ul}_{p}(\T\times \R^d)$ with $\sup_{x\in\R^d}\|v^\infty(\cdot,x)\|_{\Cc^{1}_{\alpha/2}(\T\times \R^d)}<\infty$. Hence, it suffices to show that $v^\infty$ is a strong solution to the system: %\eqref{eq:cha.weakHJB}--\eqref{eq:cha.weakHJB'} by replacing $\pi^*$ with $\pi^\infty$.
	\begin{align}
		0=&\partial_t u(0,x)+\frac{1}{2}\tr\left(\sigma\sigma^T(x) D^2_x u(0,x)\right) \notag\\ 
		&+\sup_{\varpi\in \Pc(U)} \left\{ \int_U \left[ b(x,a)D_x u(0,x) +r(0,x,a) \right] \varpi(da) \right\},  \; \text{a.e on $\R^d$},\label{eq:cha.weakHJBlimit}\\
		0=&\partial_t u (t,x)+\frac{1}{2}\tr\left((\sigma\sigma^T)(x) D^2_x u (t,x)\right) +b^{\pi^\infty}(x)D_x u(t,x) +r^{\pi^\infty}(t,x) \label{eq:cha.weakHJBlimit'} \; \text{a.e. on $\T\times\R^d$}.
	\end{align}
	
	We first show that $v^\infty$ is a strong solution to \eqref{eq:cha.weakHJBlimit'} by taking limit in \eqref{eq:HJBn'}. Notice that $v^\infty\in \Cc^{0,1 }_{\alpha/2, \alpha}(\T\times \R^d)\cap  W^{1,2, \ul}_{p}(\T\times \R^d)$ and $\sup_{x\in \R^d}\|v^{\infty}(\cdot, x)\|_{\Cc^1_{\alpha/2}(\T)}\leq A^*$.
	Fix an arbitrary $(t_0, x_0)\in \T\times \R^d$. A direct calculation gives that
	\bee  
	\begin{aligned}	
		0 =&\lim_{k\to\infty}\int_{D_1(t_0, x_0)} \bigg( \partial_t v^{n_k} (t,x)+\frac{1}{2}\tr\left((\sigma\sigma^T)(x) D^2_x v^{n_k} (t,x)\right)\\
		&\qquad \qquad\qquad\qquad +b^{\pi^{n_k}}(x)D_x v^{n_k} (t,x) +r^{\pi^{n_k}}(t,x) +\lambda\delta(t)\Hc(\pi^{n_k}(x)) \bigg) \phi(t,x) dtdx\\
		=&	\int_{D_1(t_0, x_0)} \bigg( \partial_t v^\infty (t,x)+\frac{1}{2}\tr\left((\sigma\sigma^T)(x) D^2_x v^\infty  (t,x)\right)\\
		&\qquad\qquad\qquad+b^{\pi^\infty}(x)D_x v^\infty (t,x) +r^{\pi^\infty}(t,x) \bigg) \phi(t,x) dtdx, \quad \forall \phi\in L^q(D_1(t_0, x_0)),
	\end{aligned}
	\eee 
	where the second equality follows from $\lim_{k\to \infty}\|\lambda_{n_k}\Hc(\pi^{n_k})\|_{L^\infty(\R^d)}=0$, Lemma \ref{lm:thm.C12andyoung} Part (i) and \eqref{eq:lm.brconverge} in Lemma \ref{lm:thm.C12andyoung} Part (ii).
	%time both sides of \eqref{eq:HJBn'} with $\phi$ then taking limit along the index subsequence $(n_k)_{k\in\N}$ yields
	%\bee  
	%\begin{aligned}
	%\int_{D_1(t_0, x_0)} \bigg( &\partial_t v^\infty (t,x)+\frac{1}{2}\tr\left((\sigma\sigma^T)(x) D^2_x v^\infty  (t,x)\right)\\
	%&+b^{\pi^\infty}(x)D_x v^\infty (t,x) +r^{\pi^\infty}(t,x) \bigg) \phi(t,x) dtdx =0\quad \forall \phi\in L^q(D_1(t_0, x_0)).
	%\end{aligned}
	%\eee 
	Then by the arbitrariness of $(t_0, x_0)$, we conclude that $v^\infty$ is a strong solution to \eqref{eq:cha.weakHJBlimit'}.
	
	Next we show $v^\infty$ satisfies \eqref{eq:cha.weakHJBlimit} by contradiction. We start with some a priori estimates involving $(v^{n_k}, \pi^{n_k})_{k\in \N}$ and $(v^\infty, \pi^\infty)$ near $t=0$. For each $n\in \N\cup\{\infty\}$, define
	\begin{align*}
		f^n(x,a):=b(x,a)D_x v^n(0,x)+r(0,x,a),\quad A^n(x):= \max_{a\in U} f^n(x,a).
	\end{align*}
	%For any $x\in \R^d$, $f^n(x,a)$ is equi-continuous due to the uniform bound of $\|v^n\|_{\Cc^{1,2}}$ and uniform Lipschitz of $b,\sigma, r$ on $a$. Then we have that $A^n(x)$ is uniformly Lipschitz on $x$({\color{blue} Can we use it?}). 
	First, note the fact that for any equi-continuous function sequence $g^n:U\to \R$ that converges to a limit $g^\infty(a)$, it holds that
	$$
	\lim_{n\to\infty}\lambda_n \ln\left\{ \int_U \exp\left[ \frac{1}{\lambda_n} g^n(a)\right]da \right\} = \max_{a\in U} g^\infty(a).
	$$
	Therefore, for any $\phi\in L^1(\R^d)$,
	\bee
	\begin{aligned}
		& \int_{\R^d}\left[b^{\pi^\infty}(x) D_x v^{\infty}(0,x) +r^{\pi^\infty}(0, x) \right]\phi(x)dx\\
		&=\lim_{k\to\infty} \int_{\R^d}\left[ b^{\pi^{n_k}}(x) D_x v^{n_k}(0,x) +r^{\pi^{n_k}}(0, x) +\lambda_{n_k} \Hc(\pi^{n_k}(x))\right] \phi(x)dx \\
		&=\lim_{k\to\infty} \int_{\R^d}\lambda_{n_k} \ln\left\{ \int_U \exp\left[ \frac{1}{\lambda_n} f^{n_k}(x,a)\right]da \right\} \phi(x)dx =\int_{\R^d}A^\infty(x)\phi(x)dx,
	\end{aligned}
	\eee 
	where the first equality follows from Lemma \ref{lm:thm.C12andyoung} Parts (i) and (ii) and 
	% $\|v^{n_k}-v^\infty\|_{\Cc^{0,1}_{\alpha/2,\alpha}(\T\times \R^d)}$ in Lemma \ref{lm:thm.C12andyoung} Part (i), the convergence of $\pi^{n_k}$ to $\pi^\infty$ in Lemma \ref{lm:thm.C12andyoung} Part (ii),
	$\lim_{k\to \infty}\|\lambda_{n_k}\Hc(\pi^{n_k})\|_{L^\infty}(\R^d)=0$. As a result, there exists a zero measure set $L_0\subset \R^d$ such that 
	\be\label{eq:thm.supconverge0} 
	\begin{aligned}
		& b^{\pi^\infty}(x) D_x v^{\infty}(0,x) +r^{\pi^\infty}(0, x)\\
		&= \sup_{\varpi\in \Pc(U)} \left\{ \int_U \left[ b(x,a)D_x v^\infty(0,x) +r(0,x,a) \right] \varpi(da) \right\}, \quad \forall x\in  \R^d\setminus L_0.
	\end{aligned}
	\ee 
	Note that $\pi^\infty$ is independent of $t$. Then by
	\be\label{eq:thm.strong.1}  
	\|v^\infty\|_{\Cc^{0,1 }_{\alpha/2, \alpha}(\T\times \R^d)}\vee\sup_{x\in\R^d}\|v^\infty(\cdot,x)\|_{\Cc^{1}_{\alpha/2}(\T\times \R^d)}\leq A^*,
	\ee 
	we conclude that the three terms, $\partial_t v^\infty (t,x)$, $b^{\pi^\infty}(x)D_x v^\infty (t,x) $, $r^{\pi^\infty}(t,x)$, are $\alpha/2$-H\"older continuous on variable $t$ for each $x$. This together with \eqref{eq:cha.weakHJBlimit'} yields that the term $\frac{1}{2}\tr\left((\sigma\sigma^T)(x) D^2_x v^\infty  (t,x)\right)$ shares the same $\alpha/2$-H\"older continuity on variable $t$ for a.e. $x\in \R^d$. Define 
	\be\label{eq:thm.strong.0'} 
	H(t,x):= \partial_t v^\infty (t,x)+\frac{1}{2}\tr\left((\sigma\sigma^T)(x) D^2_x v^\infty  (t,x)\right)
	+b^{\pi^\infty}(x)D_x v^\infty (t,x) +r^{\pi^\infty}(t,x) 
	\ee 
	We conclude from the above analysis that there exists a zero measure set $L\supset L_0$ and a constant $C$ such that for all $x\in \R^d\setminus L$,
	\be\label{eq:thm.strong.5} 
	|H(t,x)-H(0,x)|\leq Ct^{\alpha/2} \quad \forall t\in[0,1].
	\ee
	
	{\bf (a).} Suppose there exists an open set $B_r(x_0)$ and a constant $A>0$ such that, for all $x\in B_r(x_0)$, 
	$$
	\partial_t v^\infty (0,x)+\frac{1}{2}\tr\left(\sigma\sigma^T(x) D^2_x v^\infty (0,x)\right) +\sup_{\varpi\in \Pc(U)} \left\{ \int_U \left[ b(x,a)D_x v^\infty (0,x) +r(0,x,a) \right] \varpi(da) \right\}\geq A.
	$$
	Then, we can conclude from \eqref{eq:thm.supconverge0} and \eqref{eq:thm.strong.5} that there exists $0<\eps_0<1$ with
	$$
	H(t,x)\geq H(0,x)-\frac{A}{2}\geq A-\frac{A}{2}=\frac{A}{2}\quad \forall (t,x)\in [0,\eps_0]\times (B_r(x_0)\setminus L),
	$$
	which contradicts with \eqref{eq:cha.weakHJBlimit'}. 
	
	{\bf (b).} On the other-hand side, if there exists an open set $B_r(x_0)$ and a constant $A>0$ such that, for all $x\in B_r(x_0)$, 
	$$
	\partial_t v^\infty (0,x)+\frac{1}{2}\tr\left(\sigma\sigma^T(x) D^2_x v^\infty (0,x)\right) +\sup_{\varpi\in \Pc(U)} \left\{ \int_U \left[ b(x,a)D_x v^\infty (0,x) +r(0,x,a) \right] \varpi(da) \right\}\leq -A.
	$$
	Then, we can conclude from \eqref{eq:thm.strong.5} that there exists $0<\eps<1$ with
	$$
	H(t,x)\leq H(0,x)+\frac{A}{2}\leq -A+\frac{A}{2}=-\frac{A}{2}\quad \forall (t,x)\in [0,\eps_0]\times (B_r(x_0)\setminus L),
	$$
	which again contradicts with \eqref{eq:cha.weakHJBlimit'}. Then cases (a) and (b) together show that \eqref{eq:cha.weakHJBlimit} holds, which completes the proof.
\end{proof}

\subsection{Proof of Theorem \ref{thm:equi.existence1}}
We finally prove the sufficient condition for the existence of equilibrium in Theorem \ref{thm:equi.existence1}. As a result, the limiting feedback strategy $\pi^\infty$ is an equilibrium that satisfies Definition \ref{def:equi.relaxed}.

\begin{proof}[\bf Proof of Theorem \ref{thm:equi.existence1}:] 
	
	%$$
	%\begin{aligned}
	%0=& \partial_t V^{\pi*}(0,x)+\frac{1}{2}\tr\left(\sigma\sigma^T(x) D^2_x V^{\pi*}(0,x)\right) \notag\\ 
	%	&+\sup_{\varpi\in \Pc(U)} \left\{ \int_U \left[ b(x,a)D_x V^{\pi*}(0,x) +r(0,x,a) \right] \varpi(da) \right\} \text{ a.e. on $\R^d$}.
	%\end{aligned}
	%$$
	Take a feedback strategy $\pi^*:\R\to \Pc(U)$. Suppose $V^{\pi^*}\in \Cc^{0,1 }_{\alpha/2, \alpha}(\T\times \R^d)\cap  W^{1,2, \ul}_{p}(\T\times \R^d)$ with $\sup_{x\in\R^d}\|V^{\pi^*}(\cdot,x)\|_{\Cc^{1}_{\alpha/2}(\T\times \R^d)}<\infty$, and $V^{\pi^*}$ is a strong solution to the EHJB system \eqref{eq:cha.weakHJB}--\eqref{eq:cha.weakHJB'}. Then by the same argument between \eqref{eq:thm.strong.1} and \eqref{eq:thm.strong.5},  we conclude that 
	\be\label{eq:thm.strong.5'} 
	\begin{aligned}
		&\text{The terms $\partial_t V^{\pi^*} (t,x)$, $D_xV^{\pi*} (t,x)$, $\frac{1}{2}\tr\left((\sigma\sigma^T)(x) D^2_xV^{\pi*}  (t,x)\right)$}\\
		&\text{are $\alpha/2$-H\"older continuous on variable $t$ for a.e. $x\in \R^d$},
	\end{aligned}
	\ee
	and  \eqref{eq:thm.strong.5} holds by replacing $v^\infty$ and $\pi^\infty$ in \eqref{eq:thm.strong.0'} with $V^{\pi^*}$ and $\pi^*$ respectively. 
	
	We first show 
	\be\label{eq:thm.strong.7'} 
	\sup_{\varpi\in \Ac}  b^{\varpi}(x) D_x V^{\pi^*}(0,x) +r^{\varpi}(0, x) 
	= b^{\pi^*}(x) D_x V^{\pi*}(0,x) +r^{\pi^*}(0, x) \text{ a.e. on $\R^d$}.
	\ee 
	Suppose an open set $B_r(x_0)$ and $A>0$ exist such that 
	$$
	\sup_{\varpi\in \Ac}  b^{\varpi}(x) D_x V^{\pi^*}(0,x) +r^{\varpi}(0, x) -A
	\geq  b^{\pi^*}(x) D_x V^{\pi*}(0,x) +r^{\pi^*}(0, x) \text{ a.e. on $\R^d$}.
	$$
	Then by \eqref{eq:thm.strong.5'} and $V^{\pi^*}$ satisfying \eqref{eq:cha.weakHJB}, we conclude that there exists an constant $0<\eps_0\leq 1$ such that
	$$
	\begin{aligned}
		&\partial_t V^{\pi^*} (t,x)+\frac{1}{2}\tr\left((\sigma\sigma^T)(x) D^2_x V^{\pi^*}  (t,x)\right) +b^{\pi^*}(x)D_x V^{\pi^*} (t,x) +r^{\pi^*}(t,x)\\ 
		&= H(t)\leq H(0)+\frac{A}{2} \leq -A+\frac{A}{2}=-\frac{A}{2}\quad \text{a.e. on $[0,\eps_0]\times B_r(x_0)$,}
	\end{aligned}
	$$
	which contradicts with the assumption that $V^{\pi^*}$ satisfies \eqref{eq:cha.weakHJB'}. Thus, \eqref{eq:thm.strong.7'} holds.
	
	Now we show that $\pi^*$ satisfies \eqref{eq:def.equi}.
	Define $\Ac:=\{ \varpi:\R^d\to \Pc \text{ is measurable}\}$. There exists a finite constant $C>0$ such that for a.e. $x\in \R^d$,
	\be\label{eq:thm.supconverge1}
	\begin{aligned}
		&\sup_{\varpi\in \Ac} b^{\varpi}(x) D_x V^{\pi^*}(t,x) +r^{\varpi}(t, x)\leq \sup_{\varpi\in \Ac}  b^{\varpi}(x) D_x V^{\pi^*}(0,x) +r^{\varpi}(0, x) +Ct^{\alpha/2}\\
		&=  b^{\pi^*}(x) D_x V^{\pi*}(0,x) +r^{\pi^*}(0, x)+Ct^{\alpha/2}\\
		&\leq  b^{\pi^*}(x) D_x V^{\pi*}(t,x) +r^{\pi^*}(t, x)+2Ct^{\alpha/2}, \quad \forall 0\leq t\leq 1.
	\end{aligned}
	\ee 
	where the first and third lines follow from the H\"older continuity of $D_x V^{\pi^*}$ and $r$ with respect to $t$, the second line follows from \eqref{eq:thm.strong.7'}.
	Fix an arbitrary $x\in{\R^d}$ and take an arbitrary $\varpi\in \Ac$. By the fact $b^{\varpi}\in L^\infty({\R^d})$ and conditions on $\sigma$ stated in Assumption \ref{assume.b.sigma}, the SDE 
	\be\label{eq:thm.SDE} 
	dX^{\varpi}_s=b^{\varpi}(X_s^{\varpi})dt+\sigma(X^{\varpi}_s)dW_s\quad \text{with }X^{\varpi}_0=x
	\ee  
	admits a unique strong solution. Moreover, $X^{\varpi}_s$ has a locally H\"older continuous density for any $s>0$. Fix an arbitrary $0<\eps_0<1$ and let $\rho_N:=\inf_{s\geq 0}\{ X^{\varpi}_s\notin B_N(x)\}$. By Assumption \ref{assume.b.sigma},  for any $\epsilon$,  there exists $N_0>0$ such that 
	$$ 
	\P_x(\rho_N\leq \eps_0)\leq \frac{\epsilon}{2K_2}\quad \forall N\geq N_0.
	$$
	Noting again that  $\|V^{\pi*}\|_{L^\infty(\T\times \R^d)}+\int_0^{\infty} \sup_{x\in \R^d, a\in U} |r(s,x,a)|ds \leq 2K_2$, we get that
	\be\label{eq:thm.equi5}   
	\begin{aligned}
		&\left|  \E_x[ V^{\pi*}( {\eps_0}, X^{\varpi}_{ {\eps_0}})]- \E_x[ V^{\pi*}( {\eps_0}\wedge \rho_N, X^{\varpi}_{  {\eps_0}\wedge \rho_N})] \right| \\
		&+\left| \E_x\left[   \int_0^{\eps_0} r^{\varpi}(s, X^\varpi_s)ds   \right]  -   \E_x\left[   \int_0^{{\eps_0}\wedge \rho_N} r^{\varpi}(s, X^\varpi_s)ds   \right]  \right|\\
		&\leq  \left|\E_x\left[ V^{\pi*}(  {\eps_0}, X^{\varpi}_{  \eps_0}){\mathbf 1}_{\{\rho_N > {\eps_0} \}} \right] \right| + \left|\E_x\left[ \int_0^{\eps_0} r^{\varpi}(s, X^\varpi_s){\mathbf 1}_{\{ \rho_N>{\eps_0} \}}ds  \right] \right|\\
		&\leq  \left(\|V^{\pi*}\|_{L^\infty(\T\times \R^d)} +\int_0^{\eps_0} \sup_{x\in \R^d, a\in U} |r(s,x,a)|ds \right)\P_x(\rho_N >  {\eps_0})\leq \epsilon \quad \forall N\geq N_0.
	\end{aligned}
	\ee 
	Take any $N\in\N$ with $N\geq N_0$. In view that $V^{\pi*}\in W^{1,2}_p(D_N)\cap \Cc^{0,1}_{\alpha/2,\alpha}(D_N)$, applying again the generalized It\^o-Krylov formula to $V^{\pi*}( s, X^{\varpi}_s)$ %(see e.g. \cite[Section 2.10, Theorem 1]{krylov1980controlled} in a more general setting) 
	yields
	\begin{align*}
		&dV^{\pi^*}( s, X^{\varpi}_s)\\
		&= \left(\partial_t V^{\pi*}( s, X^{\varpi}_s)+ b^{\varpi}(x) D_x V^{\pi*}( s, X^{\varpi}_s)+\frac{1}{2}\tr\bigg((\sigma\sigma^T)(X^{\varpi}_s) D^2_x V^{\pi*}( s, X^{\varpi}_s) \bigg)\right) ds\\
		&\qquad+ \sigma(X^{\varpi}_s)D_x V^{\pi*}( s, X^{\varpi}_s) dW_s.
	\end{align*}
	%Denote by $p^{x}(s,y):= \P(X^{\varpi}_s\in dy | s\leq \rho_N)$, which belongs to $L^q(D_N)$.\footnote{Indeed, }
	Taking expectations on both sides of above equality yields
	\be\label{eq:thm.equi3} 
	\begin{aligned}
		&\E_x[V^{\varpi\otimes_{\eps_0}\pi^*}(\eps_0\wedge \rho_N, X^{\varpi}_{\eps_0\wedge \rho_N})]-J^{\pi^*}(x)+ \E_x\left[   \int_0^{{\eps_0}\wedge \rho_N} r^{\varpi}(s, X^\varpi_s)ds   \right]  \\
		&=\E_x\bigg[  \int_0^{ \eps_0\wedge \rho_N} \bigg( \partial_t V^{\pi*} ( s, X^{\varpi}_s)+\frac{1}{2}\tr\left((\sigma\sigma^T)(X^{\varpi}_s) D^2_x V^{\pi*}( s, X^{\varpi}_s)  \right)\\
		&\qquad \qquad \qquad\quad + b^{\varpi}(X^{\varpi}_s) D_x V^{\pi*}( s, X^{\varpi}_s)+r^{\varpi}( s,X^{\varpi}_s) \bigg) ds \bigg] \\
		%&= \int_{[0,\eps_0]\times B_N(0)}\bigg[ \partial_t V^{\pi*} (s, y)+\frac{1}{2}\tr\left((\sigma\sigma^T)(y) D^2_x V^{\pi*}(s, y) \right)+ b^{\varpi}(y) D_x V^{\pi*}(s, y) +r^{\varpi}( s,y)\bigg] p^{x}(s,y)dy ds\\
		&\leq  \int_0^{ \eps_0\wedge \rho_N} \bigg[ \partial_t V^{\pi*} (s, X^{\varpi}_s)+\frac{1}{2}\tr\left((\sigma\sigma^T)(X^{\varpi}_s) D^2_x V^{\pi*}(s, X^{\varpi}_s) \right)\\
		&\qquad \qquad \qquad\quad + b^{\pi^*}(y) D_x V^{\pi*}(s, X^{\varpi}_s) +r^{\pi^*}( s,X^{\varpi}_s)+2Cs^{\alpha/2}\bigg]  ds\\
		&\leq \int_{0}^{\eps_0} 2Cs^{\alpha/2}ds=\frac{2C}{1+\alpha/2}\eps_0^{1+\alpha/2},
	\end{aligned}
	\ee 
	where the first inequality follows from \eqref{eq:thm.supconverge1}, and the second last equality follows from that $V^{\pi^*}$ satisfies \eqref{eq:cha.weakHJB'}. Thanks to \eqref{eq:thm.equi5} and \eqref{eq:thm.equi3}, it holds that
	$$
	\begin{aligned}
		J^{\varpi\otimes_{\eps_0}\pi^*}(x) - J^{\pi^*}(x)=\E_x\left[ V^{\pi*}( {\eps_0}, X^{\varpi}_{ {\eps_0}})+ \int_0^{\eps_0} r^{\varpi}(s, X^\varpi_s)ds   \right]  - J^{\pi^*}(x) \leq \frac{2C}{1+\alpha/2}\eps_0^{1+\alpha/2}+\epsilon.
	\end{aligned}
	$$
	By the arbitrariness of $\epsilon$, we have 
	$$J^{\varpi\otimes_{\eps_0}\pi^*}(x) - J^{\pi^*}(x)\leq \frac{2C}{1+\alpha/2}\eps_0^{1+\alpha/2},$$ and hence $\pi^*$ fulfills Definition \ref{def:equi.relaxed}. %And \eqref{eq:thm.strong.7'} further tells that the supremum is achieved by taking $\varpi = \pi^*$ a.e. on $\R^d$. 
	
	%Moreover, as shown in the proof of Theorem \ref{thm:equi.existence} Part (i), $\pi^\infty$ achieved in Lemma \ref{lm:thm.C12andyoung} is an equilibrium.
\end{proof}

\ \\
\textbf{Acknowledgements}:
%Z. Wang  is partially supported by Shandong Excellent Young Scientists Fund Program (Overseas) (Grant No. 2025HWYQ--022) and NSFC (Grant No.12501659).
X. Yu is partially supported by Hong Kong RGC General Research Fund (GRF) under grant no. 15211524, the Hong Kong Polytechnic University research grant under no. P0045654 and the Research Centre for Quantitative Finance at the Hong Kong Polytechnic University under grant no. P0042708.

\ \\
{
\bibliographystyle{plain} 
\bibliography{reference}
}

\end{document}